\renewcommand{\vec}[1]{{\boldsymbol{#1}}}
\newcommand{\mat}[1]{{\boldsymbol{#1}}}
\newcommand{\pd}[2]{\ensuremath{\frac{\partial #1}{\partial #2}}}
\newcommand{\xix}[2]{\ensuremath{\frac{\mat{\partial \xi}_{#1}}{\mat{\partial x}_{#2}}}}
\newcommand{\fd}[2]{\frac{d #1}{d #2}}
\newcommand{\tref}[1]{Table~\ref{#1}}
\newcommand{\fref}[1]{Fig.~\ref{#1}}
\newcommand{\eref}[1]{(\ref{#1})}
\newcommand{\sref}[1]{\S~\ref{#1}}
\newcommand{\aref}[1]{Appendix~\ref{#1}}
\newcommand{\kron}{\otimes}
\newcommand{\ointerval}[2]{(#1, #2)}
\newcommand{\cinterval}[2]{[#1, #2]}
\definecolor{red}{rgb}{0.7592, 0.3137, 0.3020}
\definecolor{blu}{rgb}{0.3098, 0.5059, 0.7412}
\definecolor{grn}{rgb}{0,0.4,0}
\definecolor{org}{rgb}{1.0,0.5,0.0}
\definecolor{prp}{rgb}{0.4,0.35,0.8}
\title{Stable Coupling of Nonconforming, High-Order Finite Difference Methods}
\author{Jeremy E.~Kozdon\footnotemark[2] \and
Lucas C.~Wilcox\footnotemark[2]}
\begin{document}

\maketitle
\renewcommand{\thefootnote}{\fnsymbol{footnote}}

\footnotetext[2]{Department of Applied Mathematics, Naval Postgraduate School,
   Monterey, CA, 93943--5216 (\texttt{\{jekozdon,lwilcox\}@nps.edu})}

\renewcommand{\thefootnote}{\arabic{footnote}}

\begin{abstract}
  A methodology for handling block-to-block coupling of nonconforming,
  multiblock summation-by-parts finite difference methods is proposed.
  The coupling is based on the construction of projection operators that move a
  finite difference grid solution along an interface to a space of piecewise
  defined functions; we specifically consider discontinuous, piecewise
  polynomial functions.
  The constructed projection operators are compatible with the underlying
  summation-by-parts energy norm.
  Using the linear wave equation in two dimensions as a model problem,
  energy stability of the coupled numerical method is proven for the case of
  curved, nonconforming block-to-block interfaces.
  To further demonstrate the power of the coupling procedure, we show how it
  allows for the development of a provably energy stable coupling between
  curvilinear finite difference methods and a curved-triangle discontinuous
  Galerkin method.
  The theoretical results are verified through numerical simulations on
  curved meshes as well as eigenvalue analysis.
\end{abstract}

\begin{keywords}
  summation-by-parts, weak enforcement, high-order finite difference methods,
  coupling, stability, accuracy, projection operator, variational form, interface
\end{keywords}

\begin{AMS}
  65M06, 65M12, 65M50, 65M60, 65M70
\end{AMS}

\pagestyle{myheadings}
\thispagestyle{plain}
\markboth{J. E. KOZDON AND L. C. WILCOX}{NONCONFORMING FINITE DIFFERENCE}

\section{Introduction}
Even though high-order multiblock finite difference methods are well suited for
many problems, limitations arise for particularly complex geometries.
For instance, most formulations require that grids conform at multiblock
interfaces, that is the grids lines must be continuous. This poses a challenge
since resolution constraints in one portion of the domain can result in
unnecessarily high resolution elsewhere in the domain.
Furthermore, even though coordinate transforms enable the use of high-order
finite difference methods for complex geometries, well-conditioned partitioning
of complex domains into quadrilaterals in two-dimensions and hexahedra
in three-dimensions can be challenging and/or impossible. The impact of this is
a poorly conditioned Jacobian which has an adverse impact on the time step size
and truncation error.

One approach to overcome these complications is to relax the requirement that
the grid and numerical methods conform across block interfaces. To do this a
variety of interpolation and projection techniques have been proposed including
the use of overlapping grids~\cite{ChesshireHenshaw1990}, strong enforcement
using ghost points~\cite{PeterssonSjogreen2010}, or weak-enforcement of
continuity at block boundaries~\cite{MattssonCarpenter2010}. Here we
particularly highlight the approach of Mattsson and
Carpenter~\cite{MattssonCarpenter2010} as it is closely related to the work
presented below on the discretization of hyperbolic equations using
summation-by-parts (SBP) finite difference methods.
In that paper, compatibility conditions between interpolation operators
and the underlying SBP finite difference method were presented which could be
utilized to developed stable discretizations. The paper reported
several compatible interpolation operators for fixed refinement ratio
interfaces.
The operators proposed by Mattsson and Carpenter required that at the block
level the interfaces be conforming (i.e., the corners had to match), a restriction
which has been removed by the work of Nissen, Kormann, Grandin, and
Virta~\cite{NissenKormannGrandinVirta2014} (this later work retains the fixed
refinement ratio requirement).
To avoid geometric constraints, it has also been proposed to couple high-order
finite difference methods with unstructured grid methods.
For example, Nordstr\"{o}m and Gong~\cite{NordstromGong2006} proposed coupling a
high-order SBP method with an unstructured second-order finite volume method.

The class of finite difference methods to be considered in this work are SBP
finite difference methods~\cite{KreissScherer1974, KreissScherer1977,
Strand1994, MattssonNordstrom2004}; see \sref{sec:definitions} for the basic SBP
ideas used in this work.
One important feature of SBP methods is that the difference operators
has an associated energy norm that discretely mimics integration by parts;
this is referred to as the SBP property.

\begin{figure}[tb]
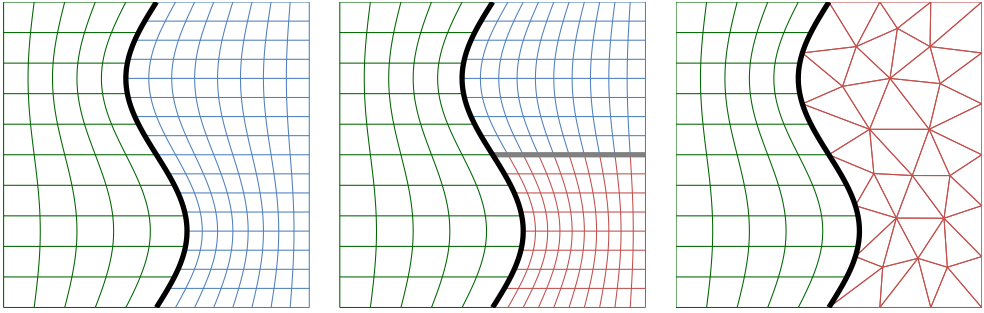

  \centering
  \input{sbp_2block.tex}
  \hfill
  \input{sbp_3block.tex}
  \hfill
  \input{sbpdg.tex}
  \caption{Illustration of the computational grids supported by the projection
    operators described in this paper. On the left is a conforming two-block
    SBP grid. In the middle is a nonconforming (T-intersection) of three SBP
    blocks; here nonconforming refers to the SBP blocks and not the SBP grids.
    On the right is a coupled structured-unstructured grid; in this paper the
    methods used will be SBP finite differences and DG grid.  In each
    illustration the thick line between blocks indicates coupling
    interfaces, i.e., the locations of the glue grids.}\label{fig:illustration}
\end{figure}
Here we present a general purpose technique for handling block-to-block coupling
of nonconforming multiblock SBP finite difference methods.  This technique also
allows for the coupling with unstructured methods such as the discontinuous
Galerkin (DG) method.  A few possible grid couplings illustrated in
\fref{fig:illustration}.
This coupling procedure uses projection operators that move finite difference
grid solutions along the coupling interface to piecewise functions.
It is with respect to the built-in norm of the SBP finite difference method
that the projection operators are constructed; projection operators with the
necessary properties are given in the electronic supplement. The SBP property
alone is not sufficient to guarantee stability, SBP preserving boundary and
interface closures are also required. In this work we will achieve this by
enforcing all boundary and interface conditions weakly through the so-called
simultaneous approximation term (SAT)
method~\cite{CarpenterGottliebAbarbanel1994}; this is similar to the use of
numerical flux terms in the DG method, a fact which will be exploited to stably
couple SBP and DG methods.

Since the projection operators move the solution to a piecewise continuous
representation (where projections are straight forward to construct), the
operators only need to be constructed once for each finite difference operator.
That is projection operators can be constructed independent of the numerical
method and grid on the other side of an interface. This independence of
interface type is one of the features that enables both development of provably
stable couplings between conforming and nonconforming SBP meshes as well as
between SBP and DG methods.  The fact that nonconforming grids can be
accommodated enables the development of adaptive mesh refinement codes using
high-order, SBP finite difference methods.

For simplicity of presentation, we take as our model problem the two-dimensional
linear acoustic wave equation in first order form.  We prove that the proposed
coupling is stable for this system of equations as well as provide numerical
evidence to confirm the analytical results.  Since the projection operators are
constructed based on the SBP operator, and do not depend on the system of
equations being solved, using standard techniques the extension to other linear
symmetric hyperbolic systems should be possible.

\section{Definitions}\label{sec:definitions}
We begin by stating a few preliminary definitions that are at the heart of this
work.

\begin{definition}[SBP property]
  A difference approximation $\mat{D}$ is called a summation-by-parts
  (SBP) approximation to $d/d x$ if it can be decomposed as
  $\mat{D} = \mat{H}^{-1}\mat{Q}$ with $\mat{H}$ being positive definite
  and $\mat{Q}$ having the property $\mat{Q} + \mat{Q}^{T} = \mat{B} =
  \diag \begin{bmatrix} -1 & 0 & \cdots & 0 & 1 \end{bmatrix}$, i.e., $\mat{Q}$
  is almost skew-symmetric.
\end{definition}

To understand why such a difference approximation is called SBP consider a
grid function $\vec{f} = {\begin{bmatrix} f_{0}&f_{1}&\cdots&f_{N}\end{bmatrix}}^{T}$. The
$\mat{H}$-weighted inner product of $\vec{f}$ and $\mat{D}\vec{f}$ gives
\begin{align}
  {(\vec{f},\mat{D}\vec{f})}_{H}
  = \vec{f}^{T}\mat{H}\mat{D}\vec{f}
  = \vec{f}^{T}\mat{Q}\vec{f}
  = \frac{1}{2}\vec{f}^{T}\left(\mat{Q}+\mat{Q}^{T}\right)\vec{f}
  = \frac{1}{2}\left(f_{N}^{2} - f_{0}^{2}\right),
\end{align}
which is of the same form as the inner product of a continuously differentiable
function $f\in C^1\cinterval{x_l}{x_r}$ and $d f / d x$:
\begin{align}
  {\left(f,\frac{d f}{d x}\right)}_{L^{2}\ointerval{x_l}{x_r}}
  = \int_{x_{l}}^{x_{r}} f\frac{d f}{d x}\;dx
  = \frac{1}{2}\left( f_{r}^{2} - f_{l}^{2} \right),
\end{align}
where $\ointerval{x_l}{x_r}$ is an open interval of the real line $\mathbb{R}$,
$f_l = f(x_l)$, and $f_r = f(x_r)$.

The difference operators commonly referred to as SBP methods are central
difference operators in the interior (with orders: $2$, $4$, $6$, $8$,
\ldots) which transition to one-sided approximations near the boundary in such a
way that the SBP property is achieved~\cite{KreissScherer1974,
  KreissScherer1977, MattssonNordstrom2004, Strand1994}. This transition to
one-sided typically leads to a degradation in accuracy at the boundary. These
SBP operators are subdivided into two classes: diagonal norm (diagonal
$\mat{H}$) and block norm (non-diagonal $\mat{H}$) operators. For the diagonal
norm operators the boundary accuracy can be at most half the interior accuracy,
i.e., if the approximation is $2q$-accurate in the interior it is at most
$q$-accurate at the boundary. For the block norm operators it is possible to
construct difference approximations that are $2q-1$ accurate at the boundary. In
both cases the global accuracy of the scheme is one more than the boundary
accuracy, i.e., $q+1$ for the diagonal norm operators and $2q$ for the block
norm operators~\cite{Gustafsson1975}. For most practical calculations the
diagonal norm operators are used as they result in stable schemes for problems
coordinate transforms and variable
coefficients~\cite{KozdonDunhamNordstrom2012, KozdonDunhamNordstrom2013,
Nordstrom2006, NordstromCarpenter2001, Olsson1995}; a notable exception is the
recent work of Mattsson and Almquist~\cite{MattssonAlmquist2013} where
artificial dissipation is used to stabilize the block norm operators in complex
geometries.

A key concept for this work is the definition of an \emph{SBP
$\mat{H}$-compatible projection operator}. This operator will allow us
to move from a grid function to a space of piecewise continuous
functions in a manner that is compatible (in an $L^{2}(\Gamma)$ sense) with
the SBP finite difference method. We call the space of piecewise
continuous functions the \emph{glue grid} since it allows us to
``glue'' together differing numerical methods.

To make this more concrete, given a finite difference grid
$\begin{bmatrix}x_0& x_1& \cdots& x_N\end{bmatrix}$ let
$\mathcal{G}_{h}\subset L^{2}(\Gamma)$ be a
finite-dimensional space of functions, i.e., the space of functions the glue
grid can represent.
Let $\vec{\psi}(\eta)= {\begin{bmatrix}\psi_0(\eta)& \psi_1(\eta)& \cdots&
\psi_K(\eta)\end{bmatrix}}^T$ be a vector of linearly independent basis functions for
$\mathcal{G}_{h}$, $\vec{f} =
{\begin{bmatrix}f_{0}&f_{1}&\cdots&f_{N}\end{bmatrix}}^{T}$ be a grid
function, and $\mat{H}$ be an SBP norm. Our goal is to define a projection
operator so that a set of coefficients
$\vec{\bar{f}} = {\begin{bmatrix}\bar{f}_{0}& \bar{f}_{1}& \cdots&
    \bar{f}_{K}\end{bmatrix}}^{T}$
can be defined from $\vec{f}$ such that $\bar{f}(\eta) = \sum_{i} \bar{f}_{i}
\psi_{i}(\eta) = \vec{\bar{f}}^{T} \vec{\psi}(\eta)$ is a compatible
representation of the grid function $\vec{f}$ in the space $\mathcal{G}_{h}$.
Note that throughout the paper we use the overline notation to represent
quantities defined on the glue grid.

In order to define the $\mat{H}$-compatible projection operators  we must define
the mass matrix on the glue grid. Namely, the symmetric, positive definite mass
matrix is $\mat{M} = \int_\Gamma \vec{\psi}(\eta)\vec{\psi}^{T}(\eta)\;d\eta$;
thus given two functions $\bar{f}(\eta) = \vec{\bar{f}}^{T}\vec{\psi}(\eta)$
and $\bar{g}(\eta) = \vec{\bar{g}}^{T}\vec{\psi}(\eta)$ in $\mathcal{G}_{h}$
the inner product is
$(\bar{f},\bar{g}) = \vec{\bar{f}}^{T} \mat{M}\vec{\bar{g}}$.

\begin{definition}[$\mat{H}$-Compatible Projection Operator]\label{def:SBPprojection}
  Let $\vec{f}$ be a grid function and $\bar{u}(\eta) =
  \vec{\bar{u}}^{T}\vec{\psi}(\eta)\in \mathcal{G}_h$ be a glue grid function. We call the
  projection matrices $\mat{P}_{f2g}$ and $\mat{P}_{g2f}$ $\mat{H}$-compatible
  if for all $\vec{f}$ and $\vec{\bar{u}}$:
  \begin{align}
    \vec{u}^{T}\mat{H}\vec{f} = \vec{\bar{u}}^{T}\mat{M}\vec{\bar{f}},
  \end{align}
  where $\vec{\bar{f}} = \mat{P}_{f2g} \vec{f}$ and $\vec{u} =
  \mat{P}_{g2f}\vec{\bar{u}}$, or equivalently
  \begin{align}\label{eqn:h:compatible:projections}
    \mat{P}_{g2f}^{T}\mat{H} = \mat{M}\mat{P}_{f2g}.
  \end{align}
  Here the subscript $f2g$ stands for projection from the finite difference grid
  to the glue grid and $g2f$ from the glue grid to the finite difference grid.
\end{definition}

Notice, that nothing in the definition implies that these solutions must be
accurate representations of one another and Definition~\ref{def:SBPprojection}
will only be used to guarantee stability. Furthermore, there is no statement
that the functions can be moved between spaces without error, that is we do not
assume that $\mat{P}_{g2f}\mat{P}_{f2g}\vec{v} = \vec{v}$ nor that
$\mat{P}_{f2g}\mat{P}_{g2f}\vec{\bar{u}} = \vec{\bar{u}}$.

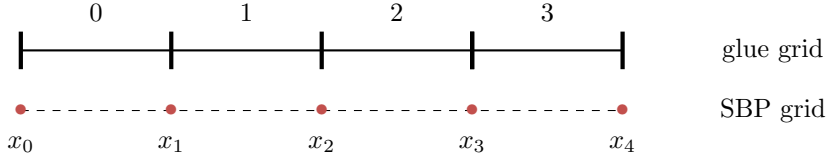
\begin{figure}
  \centering
  \begin{tikzpicture}[scale=2]
    \node at (-2,0.375) {$x_0$};
    \node at (-1,0.375) {$x_1$};
    \node at ( 0,0.375) {$x_2$};
    \node at ( 1,0.375) {$x_3$};
    \node at ( 2,0.375) {$x_4$};

    \draw[dashed] (-2,0.6) to ++(4,0);

    \node at (-2  ,0.6) {\color{red} $\bullet$};
    \node at (-1  ,0.6) {\color{red} $\bullet$};
    \node at ( 0  ,0.6) {\color{red} $\bullet$};
    \node at ( 1  ,0.6) {\color{red} $\bullet$};
    \node at ( 2  ,0.6) {\color{red} $\bullet$};
    \node at ( 3  ,0.6) {SBP grid};

    \draw[thick] (-2,1) to ++(4,0);

    \draw[ultra thick] (-2,1-0.125) to ++(0,0.25);
    \draw[ultra thick] (-1,1-0.125) to ++(0,0.25);
    \draw[ultra thick] ( 0,1-0.125) to ++(0,0.25);
    \draw[ultra thick] ( 1,1-0.125) to ++(0,0.25);
    \draw[ultra thick] ( 1,1-0.125) to ++(0,0.25);
    \draw[ultra thick] ( 2,1-0.125) to ++(0,0.25);
    \node at ( 3  ,1) {glue grid};

    \node at (-1.5,1.25) {$0$};
    \node at (-0.5,1.25) {$1$};
    \node at ( 0.5,1.25) {$2$};
    \node at ( 1.5,1.25) {$3$};

  \end{tikzpicture}
  \caption{Alignment of the glue grid, the line with interval boundaries denoted
    with hatch marks, and the finite difference grid, denoted with dots
    representing the grid points, at the leftmost boundary. The glue grid
    supports functions which are continuous on each interval, e.g., piecewise
    continuous functions. Note that the glue grid and the finite difference
    grid coincide spatially but here we have separated them vertically for
    display purposes.
  }\label{fig:gluegrid}
\end{figure}
It is natural to augment Definition~\ref{def:SBPprojection} with a set of
accuracy conditions based on the particular glue grid space $\mathcal{G}_{h}$
being used. In this work, we let $\mathcal{G}_{h}$ be the space of
discontinuous, piecewise polynomials where the intervals over which the
polynomial are defined align with the finite difference points as shown in
\fref{fig:gluegrid}.  Motivated by Mattsson and
Carpenter~\cite{MattssonCarpenter2010}, we require that the operators used in
this work satisfy a set of polynomial accuracy conditions.
Namely with a glue grid that can represent $q$th order polynomials exactly, we
define the $q$th order polynomial grid
function as $\vec{f}_{q} = {\begin{bmatrix}0^q&1^q&\cdots&N^q\end{bmatrix}}^{T}$
(with the convention that $0^0 = 1$) and let $\bar{g}_{q}(\eta) = \bar{\vec{g}}_{q}^{T}
\vec{\psi}(\eta)$ be the same polynomial on the glue grid. We then require
that the errors
\begin{align}
  \label{eqn:accuracy}
  \vec{e}_{g2f} = \mat{P}_{g2f} \vec{\bar{g}}_{q} - \vec{f}_{q},\qquad
  \vec{e}_{f2g} = \mat{P}_{f2g} \vec{f}_{q} - \vec{\bar{g}}_{q}
\end{align}
be zero for all polynomials up to order $q_{i}-1$ everywhere except near the
boundary where it is required the error be zero for polynomials up to order
$q_{b}-1$; here $q_{i}$ and $q_{b}$ are the interior and boundary accuracy of
the SBP finite difference method being used. In other words, we require the
projection operators to mimic the accuracy of the SBP finite difference method.
These are the same accuracy conditions used in the finite difference to finite
difference operators of Mattsson and Carpenter~\cite{MattssonCarpenter2010}.
The method we use to construct projection operators that satisfy the above
accuracy and stability conditions using a space of discontinuous, piecewise
polynomials are discussed in \aref{app:projection}. Additionally, the electronic
supplement to this paper contains code to generate the operators as well as the
operators themselves.

We emphasize again that the only requirement for a projection operator to result
in a stable discretization is Definition~\ref{def:SBPprojection}. The accuracy
conditions \eref{eqn:accuracy} only pertain to the specific choice of
$\mathcal{G}_{h}$ in this paper and other conditions may be required for
different glue grid spaces.

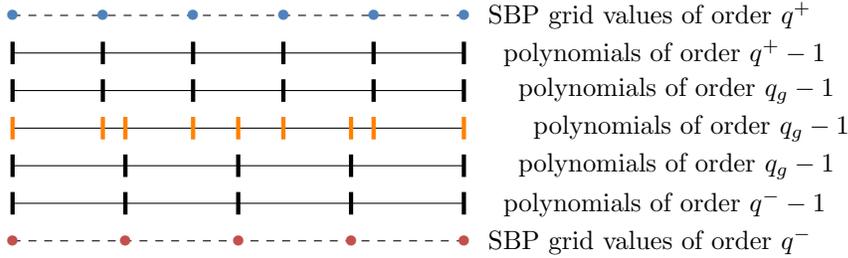
\begin{figure}
  \centering
  \begin{tikzpicture}[scale=2]

    \node[anchor=west] at ( 1.1  ,0.5) {SBP grid values of order $q^{-}$};
    \draw[dashed] (-2,0.5) to ++(3,0);

    \node at (-2.00  ,0.5) {\color{red} $\bullet$};
    \node at (-1.25  ,0.5) {\color{red} $\bullet$};
    \node at (-0.50  ,0.5) {\color{red} $\bullet$};
    \node at ( 0.25  ,0.5) {\color{red} $\bullet$};
    \node at ( 1.00  ,0.5) {\color{red} $\bullet$};

    \node[anchor=west] at ( 1.2  ,0.75) {polynomials of order $q^{-}-1$};
    \draw                  (-2,0.75) to ++(3,0);
    \draw[ultra thick,color=black ] (-2.00  ,0.75-0.075) to ++(0,0.15);
    \draw[ultra thick,color=black ] (-1.25  ,0.75-0.075) to ++(0,0.15);
    \draw[ultra thick,color=black ] (-0.50  ,0.75-0.075) to ++(0,0.15);
    \draw[ultra thick,color=black ] ( 0.25  ,0.75-0.075) to ++(0,0.15);
    \draw[ultra thick,color=black ] ( 1.00  ,0.75-0.075) to ++(0,0.15);

    \node[anchor=west] at ( 1.3,1) {polynomials of order $q_{g}-1$};
    \draw                  (-2,1) to ++(3,0);
    \draw[ultra thick,color=black] (-2.00  ,1.00-0.075) to ++(0,0.15);
    \draw[ultra thick,color=black] (-1.25  ,1.00-0.075) to ++(0,0.15);
    \draw[ultra thick,color=black] (-0.50  ,1.00-0.075) to ++(0,0.15);
    \draw[ultra thick,color=black] ( 0.25  ,1.00-0.075) to ++(0,0.15);
    \draw[ultra thick,color=black] ( 1.00  ,1.00-0.075) to ++(0,0.15);

    \node[anchor=west] at ( 1.4  ,1.25) {polynomials of order $q_{g}-1$};
    \draw        (-2,1.25) to ++(3,0);
    \draw[ultra thick,color=org] (-2.00  ,1.25-0.075) to ++(0,0.15);
    \draw[ultra thick,color=org] (-1.4   ,1.25-0.075) to ++(0,0.15);
    \draw[ultra thick,color=org] (-1.25  ,1.25-0.075) to ++(0,0.15);
    \draw[ultra thick,color=org] (-0.8   ,1.25-0.075) to ++(0,0.15);
    \draw[ultra thick,color=org] (-0.50  ,1.25-0.075) to ++(0,0.15);
    \draw[ultra thick,color=org] (-0.2   ,1.25-0.075) to ++(0,0.15);
    \draw[ultra thick,color=org] ( 0.25  ,1.25-0.075) to ++(0,0.15);
    \draw[ultra thick,color=org] ( 0.4   ,1.25-0.075) to ++(0,0.15);
    \draw[ultra thick,color=org] ( 1.00  ,1.25-0.075) to ++(0,0.15);

    \node[anchor=west] at ( 1.3  ,1.50) {polynomials of order $q_{g}-1$};
    \draw                   (-2,1.50) to ++(3,0);
    \draw[ultra thick,color=black] (-2     ,1.50-0.075) to ++(0,0.15);
    \draw[ultra thick,color=black] (-1.4   ,1.50-0.075) to ++(0,0.15);
    \draw[ultra thick,color=black] (-0.8   ,1.50-0.075) to ++(0,0.15);
    \draw[ultra thick,color=black] (-0.2   ,1.50-0.075) to ++(0,0.15);
    \draw[ultra thick,color=black] ( 0.4   ,1.50-0.075) to ++(0,0.15);
    \draw[ultra thick,color=black] ( 1     ,1.50-0.075) to ++(0,0.15);

    \node[anchor=west] at ( 1.2,1.75) {polynomials of order $q^{+}-1$};
    \draw                   (-2,1.75) to ++(3,0);
    \draw[ultra thick,color=black] (-2     ,1.75-0.075) to ++(0,0.15);
    \draw[ultra thick,color=black] (-1.4   ,1.75-0.075) to ++(0,0.15);
    \draw[ultra thick,color=black] (-0.8   ,1.75-0.075) to ++(0,0.15);
    \draw[ultra thick,color=black] (-0.2   ,1.75-0.075) to ++(0,0.15);
    \draw[ultra thick,color=black] ( 0.4   ,1.75-0.075) to ++(0,0.15);
    \draw[ultra thick,color=black] ( 1     ,1.75-0.075) to ++(0,0.15);

    \node[anchor=west] at ( 1.1  ,2.0) {SBP grid values of order $q^{+}$};
    \draw[dashed] (-2,2.0) to ++(3,0);

    \node at (-2.0 ,2.0) {\color{blu} $\bullet$};
    \node at (-1.4 ,2.0) {\color{blu} $\bullet$};
    \node at (-0.8 ,2.0) {\color{blu} $\bullet$};
    \node at (-0.2 ,2.0) {\color{blu} $\bullet$};
    \node at ( 0.4 ,2.0) {\color{blu} $\bullet$};
    \node at ( 1.0 ,2.0) {\color{blu} $\bullet$};

  \end{tikzpicture}
  \caption{Example of the alignment of a glue grid with two SBP finite
  difference grids. On the finite difference grids values are stored at the
  nodal locations whereas the glue grids stores polynomials over the indicated
  intervals. The projection operators associated with the SBP finite difference
  grid are defined to go to glue grid that conform to the SBP grid nodes. Since
  both sides of the interface do not have the same space of piecewise
  polynomials, additional projection operators are needed to move between the
  polynomial spaces.}\label{fig:glue}
\end{figure}
The $\mat{H}$-compatible projection operators discussed in \aref{app:projection}
move between an SBP finite difference solution and a given set of piecewise
polynomials of order $q-1$ where $q$ is the accuracy of the finite difference
method; see for example \fref{fig:gluegrid}. To make these operators useful in
practice we need to be able to transition between glue grid spaces.

To understand why this is, consider the situation shown in \fref{fig:glue}. Here
an interface between two SBP operators with different grids and orders
of accuracy is shown. Since the operators constructed in \aref{app:projection}
move between finite difference grid values and a fixed set of piecewise continuous
polynomials (fixed intervals and orders), the glue grids defined for either side
of the interface in \fref{fig:glue} will not conform (i.e., the polynomials
may be of a different order and/or the locations of the interval boundaries may
be different). Thus, additional projection operators are needed to move
between the different polynomial orders and intervals.

The projection operators in this work are constructed in a hierarchical fashion
where at each stage we need to construct projections between two different
piecewise polynomial glue grid spaces where one is the subset of the other,
i.e., $G_{h}^a \subset G_{h}^b$. The projection operators between spaces are
constructed to satisfy an analog of~\eref{eqn:h:compatible:projections}, namely
\begin{equation}
  \mat{M}_a\mat{P}_{g_b2g_a} = \mat{P}_{g_a2g_b}^{T}\mat{M}_b,
\end{equation}
where $\mat{M}_a$ and $\mat{M}_b$ are the respective glue grid mass matrices
(which are each symmetric positive definite given linearly independent basis
functions).
Since $G_{h}^a \subset G_{h}^b$ then $\mat{P}_{g_a2g_b}$ is a basis
transformation operation which can be determined independent of
$\mat{P}_{g_b2g_a}$ and we have
\begin{equation}
  \label{eqn:M:compatible:projections}
  \mat{P}_{g_b2g_a} = \mat{M}_a^{-1} \mat{P}_{g_a2g_b}^{T}\mat{M}_b.
\end{equation}
\begin{lemma}\label{lemma:proj:stack}
  Suppose $\mat{P}_{f2g_{a}}$ and $\mat{P}_{g_{a}2f}$ satisfy
  \eref{eqn:h:compatible:projections} with $\mat{M} = \mat{M}_{a}$.
  Further suppose $\mat{P}_{g_{b}2g_{a}}$ and $\mat{P}_{g_{a}2g_{b}}$
  satisfy \eref{eqn:M:compatible:projections}, then
  \begin{align}
    \mat{P}_{f2g_{b}} &= \mat{P}_{g_{a}2g_{b}}\mat{P}_{f2g_{a}},&
    \mat{P}_{g_{b}2f} &= \mat{P}_{g_{a}2f}\mat{P}_{g_{b}2g_{a}}
  \end{align}
  satisfy \eref{eqn:h:compatible:projections} with $\mat{M} = \mat{M}_{b}$.
\end{lemma}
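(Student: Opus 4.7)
The plan is to verify the compatibility identity \eref{eqn:h:compatible:projections} directly for the composite projections, using only the analogous identities for the two constituent pairs together with the symmetry of the glue-grid mass matrices.

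First I would write out the left-hand side $\mat{P}_{g_b2f}^{T}\mat{H}$ of the desired relation with the composite definition $\mat{P}_{g_b2f}=\mat{P}_{g_a2f}\mat{P}_{g_b2g_a}$ substituted in, giving $\mat{P}_{g_b2g_a}^{T}\mat{P}_{g_a2f}^{T}\mat{H}$. The inner factor $\mat{P}_{g_a2f}^{T}\mat{H}$ is immediately rewritten as $\mat{M}_a\mat{P}_{f2g_a}$ by the hypothesis that $\mat{P}_{f2g_a},\mat{P}_{g_a2f}$ satisfy \eref{eqn:h:compatible:projections} with $\mat{M}=\mat{M}_a$.

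Next I would handle the remaining factor $\mat{P}_{g_b2g_a}^{T}\mat{M}_a$. Taking the transpose of \eref{eqn:M:compatible:projections} and using that $\mat{M}_a$ and $\mat{M}_b$ are symmetric (they are Gram matrices of linearly independent basis functions, as noted in the text), I obtain $\mat{P}_{g_b2g_a}^{T}=\mat{M}_b\mat{P}_{g_a2g_b}\mat{M}_a^{-1}$, so that $\mat{P}_{g_b2g_a}^{T}\mat{M}_a=\mat{M}_b\mat{P}_{g_a2g_b}$. Plugging this in yields
\begin{equation*}
\mat{P}_{g_b2f}^{T}\mat{H}=\mat{M}_b\mat{P}_{g_a2g_b}\mat{P}_{f2g_a}=\mat{M}_b\mat{P}_{f2g_b},
\end{equation*}
which is exactly \eref{eqn:h:compatible:projections} with $\mat{M}=\mat{M}_b$, as required.

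There is no real obstacle here; the only subtlety is making sure the symmetry of $\mat{M}_a$ is used to pass from \eref{eqn:M:compatible:projections} to its transposed form without introducing spurious factors, and being careful about the order of the composition (reversed for $\mat{P}_{g_b2f}$ versus $\mat{P}_{f2g_b}$, which is precisely what makes the transpose line up). One could alternatively prove the lemma in inner-product form by computing $\vec{u}^{T}\mat{H}\vec{f}=(\mat{P}_{g_b2f}\vec{\bar{u}})^{T}\mat{H}\vec{f}$ and moving across first the $f\!-\!g_a$ compatibility and then the $g_a\!-\!g_b$ compatibility; that viewpoint makes the conclusion essentially tautological and could be mentioned as a one-line alternative derivation.
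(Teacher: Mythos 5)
Your proof is correct and follows essentially the same route as the paper's: expand $\mat{P}_{g_b2f}^{T}\mat{H}$, apply the $f$--$g_a$ compatibility to produce $\mat{M}_a\mat{P}_{f2g_a}$, and then use the (transposed) relation $\mat{P}_{g_b2g_a}^{T}\mat{M}_a=\mat{M}_b\mat{P}_{g_a2g_b}$ to arrive at $\mat{M}_b\mat{P}_{f2g_b}$. The only cosmetic difference is that you explicitly invoke symmetry of the mass matrices when transposing \eref{eqn:M:compatible:projections}, a step the paper leaves implicit.
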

\begin{proof}
  By direct calculation we have
  \begin{alignat}{2}
    \notag
    \mat{P}_{g_{b}2f}^{T}\mat{H} &=
    \mat{P}_{g_{b}2g_{a}}^{T}\mat{P}_{g_{a}2f}^{T} \mat{H}
    &&=
    \mat{P}_{g_{b}2g_{a}}^{T}\mat{M}_{a}\mat{P}_{f2g_{a}}\\
    &=
    \mat{M}_{b}\mat{P}_{g_{a}2g_{b}}\mat{P}_{f2g_{a}}
    &&=
    \mat{M}_{b}\mat{P}_{f2g_{b}}
  \end{alignat}
\end{proof}

Further, it follows from the nesting of the glue spaces that the accuracy
conditions~\eqref{eqn:accuracy} are also satisfied by these composition of
projection operators.  Due to this and Lemma~\ref{lemma:proj:stack} we may assume
without loss of generality that $\mat{P}_{f2g}$ and $\mat{P}_{g2f}$ project all
the way through to the finest glue space (represented by the middle glue grid in
\fref{fig:glue}), that is the intermediate spaces are not explicitly considered
further in this work.

\section{Acoustic Wave Equation: SBP-SAT Discretization}\label{sec:single}
As a model problem we consider the two-dimensional acoustic wave equation in
first order form:
\begin{align}
  \label{eqn:acoustic}
  \rho \pd{v_{i}}{t} + \pd{p}{x_{i}}& = 0\ (i = 1,2), &
  \pd{p}{t} + \lambda \left(\pd{v_{1}}{x_{1}} + \pd{v_{2}}{x_{2}}\right)& = 0,
\end{align}
where $v_{1}$ and $v_{2}$ are the particle velocities in the $x_{1}$ and $x_{2}$
directions, respectively, and $p$ is the pressure. Here, $\rho$ is the material
density and $\lambda$ is Lam\'{e}'s second parameter where we assume
$\rho,\lambda > 0$.

We are interested in discretizing \eref{eqn:acoustic} on a domain $\Omega$
which is the union of curvilinear, quadrilateral domains (blocks)
$\{\Omega_{e}\}$. To do so, we transform each domain from the physical space
$\Omega_{e}$ to the reference space $\tilde{\Omega} = [-1,1]\times[-1,1]$ via the
coordinate transform $x_{i} = x_{i}(\xi_{1},\xi_{2})$, $i=1,2$, with $x_{i}$
being the coordinates in physical domain and $\xi_{i}$ being the coordinates in
the reference domain; we assume that the inverse transforms
$\xi_{i}(x_{1},x_{2})$ also exist. (Note, for simplification of notation we
suppress the geometrical terms dependence on each domain.) The Jacobian
determinant is
\begin{equation}
  J = \pd{x_{1}}{\xi_{1}} \pd{x_{2}}{\xi_{2}} -
      \pd{x_{2}}{\xi_{1}} \pd{x_{1}}{\xi_{2}},
\end{equation}
which gives rise to the metric relations
\begin{align}
    J \pd{\xi_{1}}{x_{1}}& = \pd{x_{2}}{\xi_{2}},&
    J \pd{\xi_{1}}{x_{2}}& =-\pd{x_{1}}{\xi_{2}},&
    J \pd{\xi_{2}}{x_{2}}& = \pd{x_{1}}{\xi_{1}},&
    J \pd{\xi_{2}}{x_{1}}& =-\pd{x_{2}}{\xi_{1}}.
\end{align}
With these definitions, the acoustic wave equation \eref{eqn:acoustic} can be
written as
\begin{align}
  \label{eqn:acoustic:transformed:v}
  \rho J \pd{v_{i}}{t}
  + \pd{}{\xi_{1}}\left(J\pd{\xi_{1}}{x_{i}} p\right)
  + \pd{}{\xi_{2}}\left(J\pd{\xi_{2}}{x_{i}} p\right) &= 0,\quad i = 1,2,\\
  \label{eqn:acoustic:transformed:p}
  J\pd{p}{t} + \lambda\left(
    J\pd{\xi_{1}}{x_{1}} \pd{v_{1}}{\xi_{1}}
  + J\pd{\xi_{2}}{x_{1}} \pd{v_{1}}{\xi_{2}}
  + J\pd{\xi_{1}}{x_{2}} \pd{v_{2}}{\xi_{1}}
  + J\pd{\xi_{2}}{x_{2}} \pd{v_{2}}{\xi_{2}}
  \right) &= 0.
\end{align}
Notice that we have written the transformed equations in skew-symmetric form
with the velocity equations \eref{eqn:acoustic:transformed:v} written using a
conservative transform and the pressure equation
\eref{eqn:acoustic:transformed:p} using a non-conservative transform. It is
common to do both terms conservatively, but doing this splitting results in a
provably stable scheme.

Before presenting an SBP discretization of the governing equations, we first
introduce a variational form of the equations on each domain. This is done
to highlight the close connection between SBP finite difference methods and DG
methods. To do this we introduce test functions $w_{i}$, $i=1,2$, and $\varphi$
which belong to some appropriately chosen space. Multiplying the velocity
equation \eref{eqn:acoustic:transformed:v} by $w_{i}$, the pressure equation
\eref{eqn:acoustic:transformed:p} by $\varphi$, and integrating over a domain
$\tilde{\Omega}$ gives
\begin{align}
  \notag
  &\int_{\tilde{\Omega}} w_{i}\left[\rho J \pd{v_{i}}{t}
  + \pd{}{\xi_{1}}\left(J\pd{\xi_{1}}{x_{i}} p\right)
  + \pd{}{\xi_{2}}\left(J\pd{\xi_{2}}{x_{i}} p\right)\right]\;dA\\
  \label{eqn:var:consmon}
  &\qquad=
  -\int_{\partial \tilde{\Omega}} w_{i} S_{J} n_{i} \left( p^* - p\right)ds,
  \quad i=1,2,\\
  \notag
  &\int_{\tilde{\Omega}}
  \varphi \left[J\pd{p}{t} + \lambda\left(
    J\pd{\xi_{1}}{x_{1}} \pd{v_{1}}{\xi_{1}}
  + J\pd{\xi_{2}}{x_{1}} \pd{v_{1}}{\xi_{2}}
  + J\pd{\xi_{1}}{x_{2}} \pd{v_{2}}{\xi_{1}}
  + J\pd{\xi_{2}}{x_{2}} \pd{v_{2}}{\xi_{2}}\right)\right]\;dA\\
  \label{eqn:var:hookes}
  &\qquad=
  -\int_{\partial \tilde{\Omega}} \varphi\lambda S_{J}
  \left(v^*-v\right) ds,
\end{align}
where $p^{*}$ and $v^{*}$ are penalty terms (also known as numerical fluxes)
that satisfy the boundary or interface conditions that connect the domains.
As discussed below in the discretization, these values are derived from the
numerical solution along the edge of the block; in the case of interfaces the
numerical solution from both sides of the interface is considered. For most
methods, these values are the same (up to a possible sign change) on either side
of an interface, though in this work we will need to relax this to account for a
possible projection error, that is the fact that $\mat{P}_{g2f}\mat{P}_{f2g}$ is
not an identity operation.
Here, $S_J$ is the surface Jacobian, $n_{1}$ and $n_{2}$ are the components of
the outward pointing unit normal (in the $x_{1}$ and $x_{2}$ directions,
respectively), and $v = n_{1}v_{1}+n_{2}v_{2}$ is the normal component
of velocity. For the reference domain $\tilde{\Omega} =
[-1,1]\times[-1,1]$ (which will be used for the finite difference
discretization) the surface Jacobian terms and outward pointing
normals for the edge defined by $\xi_{i} = \pm 1$ are
\begin{alignat}{3}
  \label{eqn:surf:Jacobian}
  S_{J} =\;& J\sqrt{{\left(\pd{\xi_{i}}{x_{2}}\right)}^2 + {\left(\pd{\xi_{i}}{x_{1}}\right)}^2},\quad&
  n_{1} =\;& \pm\frac{J}{S_{J}} \pd{\xi_{i}}{x_{2}},\quad&
  n_{2} =\;& \pm\frac{J}{S_{J}} \pd{\xi_{i}}{x_{1}}.
\end{alignat}
With the above definition, the boundary integrals can be rewritten as
\begin{align}
  \notag
  \int_{\partial \tilde{\Omega}}&w_{i} S_{J} n_{i} \left( p^* - p\right)ds\\
  \label{eqn:bound:int:vel}
  =\;& \phantom{+}
  \int_{-1}^{1} {\left[w_{i} S_{J} n_{i} \left( p^* - p\right)\right]}_{\xi_{1}=-1}d\xi_{2}
  +
  \int_{-1}^{1} {\left[w_{i} S_{J} n_{i} \left( p^* - p\right)\right]}_{\xi_{1}= 1}d\xi_{2}\\
  \notag
  &+
  \int_{-1}^{1} {\left[w_{i} S_{J} n_{i} \left( p^* - p\right)\right]}_{\xi_{2}=-1}d\xi_{1}
  +
  \int_{-1}^{1} {\left[w_{i} S_{J} n_{i} \left( p^* - p\right)\right]}_{\xi_{2}= 1}d\xi_{1},
\end{align}
\begin{align}
  \notag
  \int_{\partial \tilde{\Omega}}& \varphi\lambda S_{J} \left(v^*-v\right) ds \\
  \label{eqn:bound:int:pres}
  =\;& \phantom{+}
    \int_{-1}^{1} {\left[\varphi\lambda S_{J} \left(v^*-v\right)\right]}_{\xi_{1}=-1}\;d\xi_{2}
   +\int_{-1}^{1} {\left[\varphi\lambda S_{J} \left(v^*-v\right)\right]}_{\xi_{1}= 1}\;d\xi_{2}\\
  \notag
  &+\int_{-1}^{1} {\left[\varphi\lambda S_{J} \left(v^*-v\right)\right]}_{\xi_{2}=-1}\;d\xi_{1}
   +\int_{-1}^{1} {\left[\varphi\lambda S_{J} \left(v^*-v\right)\right]}_{\xi_{2}= 1}\;d\xi_{1}.
\end{align}

Going back to the differential form of the equations, we discretize the
reference domain $\tilde{\Omega}$ with an $(N_{1}+1) \times (N_{2}+1)$ grid of
equally spaced points. The grid spacing in the $\xi_{i}$ dimension is $h_{i} =
2/N_{i}$ for $i = 1,2$. Thus the $(k,l)$ grid point is at $(\xi_{1},\xi_{2}) =
(kh_{1}-1,lh_{2}-1)$ for $k = 0,\dots,N_{1}$ and $l = 0,\dots,N_{2}$.
We define the pressure solution vector on the grid as
\begin{align}
  \vec{p} &=
  \begin{bmatrix}
    p_{00} & p_{01} & \cdots & p_{0N_{2}} & p_{10} & \cdots & p_{N_{1}N_{2}}
  \end{bmatrix}^{T},
\end{align}
where $p_{kl}$ approximates the pressure $p$ at grid point $(k,l)$; the solution
vectors $\vec{v}_{1}$ and $\vec{v}_{2}$ are similarly defined.
An SBP-SAT semi-discretization (discretization only in space) of
\eref{eqn:acoustic:transformed:v}--\eref{eqn:acoustic:transformed:p} on a domain
$\Omega_e$ using an $(N_{1}+1) \times (N_{2}+1)$ grid is then
\begin{align}
  \label{eqn:dis:consmon}
  &\rho \mat{J} \fd{\vec{v}_{i}}{t}
  + \mat{D}_{1} \mat{J}\xix{1}{i} \vec{p}
  + \mat{D}_{2} \mat{J}\xix{2}{i} \vec{p} =
  - \mat{{H}}^{-1}\vec{\mathcal{F}}_{v_{i}}, \quad i=1,2,\\
  \label{eqn:dis:hookes}
  &\vec{J}\pd{\vec{p}}{t} +\lambda\left(
    \mat{J}\xix{1}{1}\mat{D}_{1}\vec{v}_{1}
  + \mat{J}\xix{2}{1}\mat{D}_{2}\vec{v}_{1}
  + \mat{J}\xix{1}{2}\mat{D}_{1}\vec{v}_{2}
  + \mat{J}\xix{2}{2}\mat{D}_{2}\vec{v}_{2}
  \right)
  =
  -\lambda\mat{H}^{-1}\vec{\mathcal{F}}_{p}.
\end{align}
Here, we have defined the matrices
\begin{align}
  \mat{H}     =\;& \mat{H}_{N_{1}}\kron\mat{H}_{N_{2}},&
  \vec{D}_{1} =\;& \mat{D}_{N_{1}}\kron\mat{I}_{N_{2}},&
  \vec{D}_{2} =\;& \mat{I}_{N_{1}}\kron\mat{D}_{N_{2}},&
\end{align}
where $\mat{I}_{N_{i}}$, $\mat{H}_{N_{i}}$, and $\mat{D}_{N_{i}}$ are all
matrices of size $(N_{i}+1) \times (N_{i}+1)$ with $\mat{I}_{N_{i}}$ being the
identity matrix, and $\mat{H}_{N_{i}}$ and $\mat{D}_{N_{i}}$ being the 1-D SBP
finite difference operators in the $\xi_{i}$ direction. The vectors
$\vec{v}_1$, $\vec{v}_2$, and $\vec{p}$ are the unknown velocities and
pressures at the finite difference grid points. The diagonal
matrices $\mat{J}$, $\xix{1}{1}$, $\xix{1}{2}$, $\xix{2}{1}$, and $\xix{2}{2}$
have the respective geometric factors evaluated at the finite difference grid
points along their diagonal. For example, letting  $J_{kl}$ denote the Jacobian
determinant (or its approximation) at grid point $(k,l)$ we define
\begin{align}
  \mat{J} = \diag
  \begin{bmatrix}
    J_{00} & J_{01} & \cdots & J_{0N_{2}} & J_{10} & \cdots & J_{N_{1}N_{2}}
  \end{bmatrix},
\end{align}
where $\diag(\cdot)$ constructs a diagonal matrix from a given vector; the
other diagonal matrices are defined similarly. For simplicity of the discussion
we assume that the material parameters $\rho$ and $\lambda$ are constants
in $\Omega$.

Before stating the specific form for the penalty terms
$\vec{\mathcal{F}}_{v_{i}}$ and $\vec{\mathcal{F}}_{p}$, we note the
similarities between the SBP-SAT discretization and a DG method based on the
integral form. Namely, if $\mat{H}$ is interpreted as an elemental mass matrix
then the left-hand side of the \eref{eqn:dis:consmon}--\eref{eqn:dis:hookes}
correspond to the body integral terms in
\eref{eqn:var:consmon}--\eref{eqn:var:hookes}.
Similarly, the right-hand side terms correspond to the boundary integrals. Thus,
the SBP-SAT discretization and the DG method have the same discrete structure.
Additionally, what are commonly referred to as penalty terms in SBP-SAT finite
difference methods are of the same form as the numerical flux terms in DG methods. This is
important because even though the methods are developed using different
continuous representations (i.e., differential versus variational form), these
similarities will facilitate the stable coupling using the penalty and flux
terms, see \sref{sec:sbp:dg}.

The penalty terms in \eref{eqn:dis:consmon}--\eref{eqn:dis:hookes} are taken to
be of the form
\begin{align}
  \label{eqn:pen:v}
  \vec{\mathcal{F}}_{v_{i}} =\;&
    \left( \mat{e}_{W} \kron \vec{\mathcal{F}}^{W}_{v_{i}} \right)
  + \left( \mat{e}_{E} \kron \vec{\mathcal{F}}^{E}_{v_{i}} \right)
  + \left( \vec{\mathcal{F}}^{S}_{v_{i}} \kron \mat{e}_{S} \right)
  + \left( \vec{\mathcal{F}}^{N}_{v_{i}} \kron \mat{e}_{N} \right),\\
  \label{eqn:pen:p}
  \vec{\mathcal{F}}_{p} =\;&
    \left( \mat{e}_{W} \kron \vec{\mathcal{F}}^{W}_{p} \right)
  + \left( \mat{e}_{E} \kron \vec{\mathcal{F}}^{E}_{p} \right)
  + \left( \vec{\mathcal{F}}^{S}_{p} \kron \mat{e}_{S} \right)
  + \left( \vec{\mathcal{F}}^{N}_{p} \kron \mat{e}_{N} \right).
\end{align}
Here the subscripts and superscript $W$, $E$, $S$, and $N$ are used to denote
which side of the domain the penalty term correspond to. For instance, $W$ and
$E$ correspond to the \emph{west} and \emph{east} sides of the domain with
$\xi_{1} = -1$ and $\xi_{1} = 1$, respectively. Similarly, $S$ and $N$
correspond to the \emph{south} and \emph{north} with $\xi_{2} = -1$ and $\xi_{2} = 1$.
The vectors $\vec{e}_{W}$ and $\vec{e}_{E}$ have length $N_{1}+1$ and are zero
everywhere except the first and last entry, respectively, where they are $1$,
i.e.,
\begin{alignat}{2}
  \vec{e}_{W}
  =\;&
  \begin{bmatrix}
    1 & 0 & \cdots & 0
  \end{bmatrix}^{T}
  \quad
  \text{and}
  \quad&
  \vec{e}_{E}
  =\;&
  \begin{bmatrix}
    0 & \cdots & 0 & 1
  \end{bmatrix}^{T};
\end{alignat}
the vectors $\vec{e}_{S}$ and $\vec{e}_{N}$ are defined analogously.

The vectors $\vec{\mathcal{F}}^{W}_{v_{i}}$ and $\vec{\mathcal{F}}^{W}_{p}$, of
length $N_{2}+1$, are the actual penalty terms (or flux differences) along the west
side. These vectors are taken to have the form
\begin{alignat}{2}
  \label{eqn:pen:west}
  \vec{\mathcal{F}}^{W}_{v_{i}} =\;& \mat{H}_{2}\mat{S}_{JW} \mat{n}^{W}_{i}
  \left( \vec{p}^*_{W} - \vec{p}_{W} \right),\qquad&
                          \vec{\mathcal{F}}^{W}_{p} =\;& \mat{H}_{2}\mat{S}_{JW}
  \lambda
  \left( \vec{v}^*_{W} - \vec{v}_{W} \right).
\end{alignat}
Here $\mat{S}_{JW}$ and $\mat{n}^{W}_{i}$ are $(N_{2}+1) \times (N_{2}+1)$
diagonal matrices with elements corresponding to the surface Jacobian
terms and outward pointing unit normals along the west face; see
\eref{eqn:surf:Jacobian}. The vectors $\vec{p}_{W}$ and $\vec{v}_{W}$, of length
$N_{2}+1$, are the pressure and normal components of velocity ($v = v_{1}
n_{1} + v_{2} n_{2}$), respectively, at grid points along the west face.
Finally, the vectors $\vec{p}^*_{W}$ and $\vec{v}^*_{W}$ will be set based on
the interface and/or boundary conditions for the block. In the case of interface
conditions, these edge values will need to be set in a consistent manner across
the interface. As noted above, these penalty terms correspond to the numerical
fluxes in DG methods, and stability results through a judicious choice of
penalty or flux.

Stability of the semi-discrete discretization, whether a pure multiblock SBP-SAT
discretization or a coupled SBP-DG method, will be based on energy analysis.
To do so we define the energy in a single SBP block as
\begin{align}
  \label{eqn:energy:single}
  E =
  \frac{\rho}{2}\vec{v}_{1}^{T}\mat{J}\mat{H}\vec{v}_{1}
  + \frac{\rho}{2}\vec{v}_{2}^{T}\mat{J}\mat{H}\vec{v}_{2}
  + \frac{1}{2\lambda}\vec{p}^{T}\mat{J}\mat{H}\vec{p},
\end{align}
and define the total energy in the solution as
\begin{align}
  \mathcal{E} = \sum_{blocks} E.
\end{align}
Since the governing equations \eref{eqn:acoustic} are energy conservative with
the free surface boundary condition, it is appropriate to use the following
definition of discrete stability~\cite{GustafssonKreissOliger1996}:
\begin{definition}[Energy Stability]
  The semi-discrete discretization is said to be stable if
  \begin{align}
    \fd{\mathcal{E}}{t} \le 0.
  \end{align}
\end{definition}

For a single SBP block, the energy dissipation rate is characterized by the
following lemma:
\begin{lemma}\label{lemma:disp:single}
  The single SBP block discretization
  \eref{eqn:dis:consmon}--\eref{eqn:dis:hookes} with penalty terms
  \eref{eqn:pen:v}--\eref{eqn:pen:p} of the form of \eref{eqn:pen:west} has the
  energy dissipation rate
  \begin{align}
    \label{eqn:disp:single}
    \fd{E}{t} = & \sum_{K = \{W,E,S,N\}} \mathcal{D}_{K},\\
    \label{eqn:disp:single:edge}
    \mathcal{D}_{K} = &
    - \vec{v}_{K}^{T}\mat{H}_{K}\mat{S}_{JK}\vec{p}^{*}_{K}
    + \vec{v}_{K}^{T}\mat{H}_{K}\mat{S}_{JK}\vec{p}_{K}
    - {\left(\vec{v}^{*}_{K}\right)}^{T}\mat{H}_{K}\mat{S}_{JK}\vec{p}_{K},
  \end{align}
  where $\mathcal{D}_{K}$ is the dissipation rate along edge $K$ of the block
  with $\mat{H}_{W} = \mat{H}_{E} = \mat{H}_{2}$ and
  $\mat{H}_{S} = \mat{H}_{N} = \mat{H}_{1}$.
\end{lemma}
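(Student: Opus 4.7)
The plan is to compute $dE/dt$ by differentiating the energy \eref{eqn:energy:single} in time, substituting the semi-discretization \eref{eqn:dis:consmon}--\eref{eqn:dis:hookes}, and then using the SBP property to reduce the resulting bulk (volume) terms to boundary contributions. The right-hand sides of \eref{eqn:dis:consmon}--\eref{eqn:dis:hookes} are already of boundary type (after multiplying by $\mat{H}$ from the left the $\mat{H}^{-1}$ is eliminated, and the edge restriction vectors $\vec{e}_{W},\vec{e}_{E},\vec{e}_{S},\vec{e}_{N}$ pull out only the edge data); these will produce the $-\vec{v}_K^T\mat{H}_K\mat{S}_{JK}\vec{p}^*_K$ and $-(\vec{v}^*_K)^T\mat{H}_K\mat{S}_{JK}\vec{p}_K$ pieces in \eref{eqn:disp:single:edge}. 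The nontrivial step is to show that the bulk derivative terms collapse to the remaining $+\vec{v}_K^T\mat{H}_K\mat{S}_{JK}\vec{p}_K$ contributions at each edge.

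First, since $\rho$ and $\lambda$ are constant, multiplying \eref{eqn:dis:consmon} by $\vec{v}_i^T \mat{H}$ and \eref{eqn:dis:hookes} by $\vec{p}^T \mat{H}/\lambda$, and adding for $i=1,2$, cancels the $\rho\mat{J}$ and $\mat{J}/\lambda$ factors and yields $dE/dt$ on the left. On the right, for each direction $k\in\{1,2\}$ the bulk terms grouped by the operator $\mat{D}_k$ take the form
\begin{equation*}
  \vec{v}_i^T \mat{H}\mat{D}_k \mat{A}_{ki}\vec{p}
  + \vec{p}^T \mat{H} \mat{A}_{ki}\mat{D}_k\vec{v}_i,
  \qquad \mat{A}_{ki} := \mat{J}\xix{k}{i},
\end{equation*}
summed over $i=1,2$. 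Here I would use that $\mat{H}\mat{D}_k = \mat{Q}_k$ with $\mat{Q}_1 = \mat{Q}_{N_1}\otimes \mat{H}_{N_2}$ and $\mat{Q}_2 = \mat{H}_{N_1}\otimes \mat{Q}_{N_2}$, and that $\mat{A}_{ki}$ is diagonal (hence symmetric and commuting with transposition). Taking the transpose of the second term and using $\mat{Q}_k + \mat{Q}_k^T = \mat{B}_k$ (which for $k=1$ equals $\mat{B}_{N_1}\otimes\mat{H}_{N_2}$ and is supported only on the west/east edges, with the analogous statement for $k=2$) gives
\begin{equation*}
  \vec{v}_i^T \mat{H}\mat{D}_k \mat{A}_{ki}\vec{p}
  + \vec{p}^T \mat{H} \mat{A}_{ki}\mat{D}_k\vec{v}_i
  = \vec{v}_i^T \mat{B}_k \mat{A}_{ki}\vec{p}.
\end{equation*}
This is the discrete analogue of the integration-by-parts step that produced the boundary terms in \eref{eqn:var:consmon}--\eref{eqn:var:hookes}; the skew-symmetric splitting chosen in \eref{eqn:acoustic:transformed:v}--\eref{eqn:acoustic:transformed:p} is exactly what makes this cancellation exact at the discrete level.

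Next, I would evaluate $\vec{v}_i^T \mat{B}_k \mat{A}_{ki}\vec{p}$ on each of the four edges. Restricting to the west edge ($\xi_1=-1$) using the $-1$ entry of $\mat{B}_{N_1}$ yields $-\vec{v}_{iW}^T \mat{H}_{N_2}(\mat{J}\xix{1}{i})_W \vec{p}_W$. Summing over $i=1,2$ and using the surface Jacobian/normal identities \eref{eqn:surf:Jacobian} gives $\sum_i (\mat{J}\xix{1}{i})_W = -\mat{S}_{JW}\mat{n}^W_i$, which turns the sum into $+\vec{v}_W^T\mat{H}_{N_2}\mat{S}_{JW}\vec{p}_W$ where $\vec{v}_W = \mat{n}^W_1\vec{v}_{1W}+\mat{n}^W_2\vec{v}_{2W}$. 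The analogous computation on $E$, $S$, $N$ (with the appropriate sign from $\mat{B}_k$ and the outward normal) produces the $+\vec{v}_K^T\mat{H}_K\mat{S}_{JK}\vec{p}_K$ term in \eref{eqn:disp:single:edge} for each edge $K$. Finally, substituting the edge-by-edge penalty expressions \eref{eqn:pen:west} (and the analogous east/south/north versions) into $-\vec{p}^T\mat{H}\vec{\mathcal{F}}_p/\lambda - \sum_i \vec{v}_i^T\mat{H}\vec{\mathcal{F}}_{v_i}$ yields exactly the $-\vec{v}_K^T\mat{H}_K\mat{S}_{JK}\vec{p}^*_K$ and $-(\vec{v}^*_K)^T\mat{H}_K\mat{S}_{JK}\vec{p}_K$ contributions, completing the identification with \eref{eqn:disp:single}--\eref{eqn:disp:single:edge}.

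The main obstacle I anticipate is bookkeeping rather than anything conceptual: in particular, verifying that the sum over $i$ of the diagonal geometric matrices $\mat{J}\xix{k}{i}$ restricted to an edge combines correctly with the surface Jacobian and outward unit normal definitions of \eref{eqn:surf:Jacobian} (including sign conventions for west/south versus east/north), and that the collapse of $\mat{H}$ to $\mat{H}_K$ on each edge via the tensor-product structure of $\mat{B}_k$ is applied consistently across all four edges. Once this bookkeeping is settled the identity \eref{eqn:disp:single} follows directly.
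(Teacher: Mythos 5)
Your proposal is correct and follows essentially the same route as the paper's proof in Appendix A: differentiate the energy, substitute the semi-discretization, use $\mat{Q}_k+\mat{Q}_k^{T}=\mat{B}_k$ together with the diagonality of the metric matrices to collapse the volume terms to edge terms, convert the metric factors to $\mat{S}_{JK}$ and normals via \eref{eqn:surf:Jacobian}, and substitute the penalty terms. The only blemish is the notational slip $\sum_i(\mat{J}\xix{1}{i})_W=-\mat{S}_{JW}\mat{n}^W_i$ (a free index $i$ survives a sum over $i$); the intended per-component identity $(\mat{J}\xix{1}{i})_W=-\mat{S}_{JW}\mat{n}^W_i$ is what makes the edge sums assemble into $\vec{v}_K^{T}\mat{H}_K\mat{S}_{JK}\vec{p}_K$, exactly as in the paper.
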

\begin{proof}
  See \aref{app:lemma:disp:single}.
\end{proof}

The implication of the lemma is that the energy dissipation rate for a single
block is the sum of the dissipation rate for each of its edges.  Thus, we can
prove global semi-discrete energy stability by showing that energy is
dissipated across every interface and boundary.

The crux of a stable coupling is then choosing $\vec{p}^{*}$ and $\vec{v}^{*}$
such that when \eref{eqn:disp:single} is summed over all blocks $d\mathcal{E}/dt
\le 0$. Before continuing on to present how the interface terms $\vec{p}^*$ and
$\vec{v}^*$ are formulated when projection operators are used, we consider the
form of the penalty terms take for an exterior boundary and when the interface
is conforming (matching grid and SBP finite difference scheme across interface).

\subsection{Exterior Boundary Treatment}
Since the focus of this work is interface treatment, we only consider the
zero pressure boundary condition $p = 0$. Numerically, through the penalty
terms, this enforcement is done through a linear combination of a central and
upwind biased penalty; here by central we mean a penalty term that leads to no
energy dissipation. If a block edge $K \in \{W,E,S,N\}$ is an outer boundary,
the penalty terms are taken to be of the form
\begin{align}
  \label{eqn:exterior}
  \vec{p}_{K}^* - \vec{p}_{K} = -\vec{p}_{K},
  \qquad
  \vec{v}_{K}^* - \vec{v}_{K} = \alpha \frac{\vec{p}_{K}}{Z},
\end{align}
where $Z = \sqrt{\rho/\lambda} > 0$ is the impedance of the material. Here the
parameter $\alpha \ge 0$ has been introduced with $\alpha = 0$
being the central boundary treatment and $\alpha = 1$ being the fully upwind
boundary treatment. The following lemma assures that the external boundary
treatment is dissipative.

\begin{lemma}
  If edge $K \in \{W,E,S,N\}$ of an SBP block is an exterior boundary with
  penalty terms of the form \eref{eqn:exterior} then the energy dissipation rate
  for the edge is
  \begin{align}
    \label{eqn:disp:single:edge:bnd}
    \mathcal{D}_{K} = -\frac{\alpha}{Z} \vec{p}_{K}^{T} \mat{H}_{K}\mat{S}_{JK}
    \vec{p}_{K},
  \end{align}
  which is non-positive if $\alpha \ge 0$.
\end{lemma}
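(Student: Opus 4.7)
The plan is to substitute the boundary penalty choices from~\eref{eqn:exterior} directly into the general per-edge dissipation formula~\eref{eqn:disp:single:edge} provided by Lemma~\ref{lemma:disp:single}, and then simplify. First I would rewrite~\eref{eqn:exterior} as $\vec{p}^{*}_{K} = \vec{0}$ and $\vec{v}^{*}_{K} = \vec{v}_{K} + (\alpha/Z)\,\vec{p}_{K}$, so that each starred quantity is expressed purely in terms of the interior trace and known data.

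Next I would insert these expressions into
\begin{align*}
\mathcal{D}_{K} = -\vec{v}_{K}^{T}\mat{H}_{K}\mat{S}_{JK}\vec{p}^{*}_{K}
+ \vec{v}_{K}^{T}\mat{H}_{K}\mat{S}_{JK}\vec{p}_{K}
- {\left(\vec{v}^{*}_{K}\right)}^{T}\mat{H}_{K}\mat{S}_{JK}\vec{p}_{K}.
\end{align*}
The first term drops because $\vec{p}^{*}_{K}=\vec{0}$. Expanding the third term gives $-\vec{v}_{K}^{T}\mat{H}_{K}\mat{S}_{JK}\vec{p}_{K} - (\alpha/Z)\,\vec{p}_{K}^{T}\mat{H}_{K}\mat{S}_{JK}\vec{p}_{K}$, and this cancels the middle term, leaving exactly $\mathcal{D}_{K} = -(\alpha/Z)\,\vec{p}_{K}^{T}\mat{H}_{K}\mat{S}_{JK}\vec{p}_{K}$ as claimed. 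Here I use that the diagonal matrices $\mat{H}_{K}$ and $\mat{S}_{JK}$ commute freely with scalar factors, so the quadratic form is well-defined.

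For the non-positivity, I would note that $\mat{H}_{K}$ is symmetric positive definite (since it is the one-dimensional SBP norm, diagonal with positive entries in the diagonal-norm case), $\mat{S}_{JK}$ is diagonal with strictly positive entries by its definition in~\eref{eqn:surf:Jacobian} (non-degenerate mapping), and their product is therefore symmetric positive definite. Combined with $\alpha\ge 0$ and $Z=\sqrt{\rho/\lambda}>0$, this gives $\mathcal{D}_{K}\le 0$.

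The proof is entirely algebraic substitution, so there is no real obstacle; the only thing to be careful about is correctly bookkeeping the signs when substituting the form of $\vec{v}^{*}_{K}$ and making sure the middle term of~\eref{eqn:disp:single:edge} cancels exactly, rather than doubling, against the part of the third term linear in $\vec{v}_{K}$.
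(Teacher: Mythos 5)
Your proof is correct and follows exactly the same route as the paper: solve \eref{eqn:exterior} for $\vec{p}^{*}_{K}=\vec{0}$ and $\vec{v}^{*}_{K}=\vec{v}_{K}+(\alpha/Z)\vec{p}_{K}$, substitute into \eref{eqn:disp:single:edge}, and observe the cancellation, with non-positivity following from the diagonal positive definiteness of $\mat{H}_{K}$ and $\mat{S}_{JK}$. Nothing is missing.
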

\begin{proof}
  Solving penalty term \eref{eqn:exterior} for $\vec{p}_{K}^{*}$ and
  $\vec{v}_{K}^{*}$ gives
  \begin{align}
    \vec{p}_{K}^{*} = 0,
    \qquad
    \vec{v}_{K}^{*} = \vec{v}_{K} + \alpha \frac{\vec{p}_{K}}{Z},
  \end{align}
  and \eref{eqn:disp:single:edge:bnd} follows immediately after substituting
  $\vec{p}_{K}^{*}$ and $\vec{v}_{K}^{*}$ into the edge dissipation rate
  \eref{eqn:disp:single:edge}. The non-positiveness of
  \eref{eqn:disp:single:edge:bnd} follows from the fact that $\mat{H}_{K}$ and
  $\mat{S}_{JK}$ are diagonal, positive definite matrices.
\end{proof}

\subsection{Conforming Interface Treatment}
We call a block interface conforming when the grid and the $\mat{H}$-norm
are the same on both sides of the interface; the latter condition typically
implies that the same SBP finite difference method is being used on both sides
of the interface. The interface conditions are continuity of pressure and the
normal component of velocity:
\begin{align}
  p^{+} = p^{-},
  \qquad
  v^{+} = -v^{-}.
\end{align}
Here we have introduced the superscripts~$+$ and~$-$ to denote the two sides of
the interface. Recall that $v$ is the normal component of the velocity and thus
the minus sign in the velocity condition is due to the fact that the normals are
equal and opposite on either side of the interface. For the minus side of the
interface the penalty terms can then be written as a combination of the central
and upwind penalties:
\begin{align}
  \label{eqn:pen:con:v}
  \vec{p}^* - \vec{p} =\;&
  \vec{p}^{*} - \vec{p}^{-} =
  \frac{1}{2}\left(\vec{p}^{+}-\vec{p}^{-}\right)
  +\alpha\frac{Z}{2}\left( \vec{v}^{+}+\vec{v}^{-} \right),\\
  \label{eqn:pen:con:p}
  \vec{v}^* - \vec{v} =\;&
  \vec{v}^* - \vec{v}^{-} =
  -\frac{1}{2}\left(\vec{v}^{+}+\vec{v}^{-}\right)
  -\alpha\frac{1}{2Z} \left(\vec{p}^{+}-\vec{p}^{-}\right),
\end{align}
where stability results when $\alpha \ge 0$, and the central penalty (zero
energy dissipation) is achieved when $\alpha = 0$ and the upwind penalty
with $\alpha = 1$.

\begin{lemma}
  Consider a single, conforming interface between two SBP blocks with penalty
  terms of the form \eref{eqn:pen:con:v}--\eref{eqn:pen:con:p}. Let
  $\mathcal{D}^{-}$ and $\mathcal{D}^{+}$ be the energy dissipation rate along
  each side of the interface, then
  \begin{align}
    \notag
    \mathcal{D}^{-} + \mathcal{D}^{+}
    =\;&
    -\alpha\frac{Z}{2}
    {\left(\vec{v}^{-}+\vec{v}^{+}\right)}^{T}
    \mat{H} \mat{S}_{J}
    \left(\vec{v}^{-}+\vec{v}^{+}\right)\\
    \label{eqn:disp:edge:con}
    &
    -\alpha\frac{1}{2Z}
    {\left(\vec{p}^{-}-\vec{p}^{+}\right)}^{T}
    \mat{H} \mat{S}_{J}
    \left(\vec{p}^{-}-\vec{p}^{+}\right),
  \end{align}
  is non-positive for $\alpha \ge 0$.
\end{lemma}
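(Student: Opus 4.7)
The plan is to reduce the sum $\mathcal{D}^- + \mathcal{D}^+$ to the quadratic form in \eqref{eqn:disp:edge:con} by direct substitution of the penalty formulas~\eqref{eqn:pen:con:v}--\eqref{eqn:pen:con:p} into the single-edge dissipation identity~\eqref{eqn:disp:single:edge} of Lemma~\ref{lemma:disp:single}. First I would solve the penalty defining equations for the starred quantities on both sides. On the minus side this gives
\begin{align*}
\vec{p}^{*-} &= \tfrac{1}{2}(\vec{p}^{+}+\vec{p}^{-}) + \tfrac{\alpha Z}{2}(\vec{v}^{+}+\vec{v}^{-}), &
\vec{v}^{*-} &= \tfrac{1}{2}(\vec{v}^{-}-\vec{v}^{+}) - \tfrac{\alpha}{2Z}(\vec{p}^{+}-\vec{p}^{-}),
\end{align*}
and swapping the roles of $+$ and $-$ yields the plus-side values. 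Two structural observations fall out immediately and are worth recording before the calculation: the pressure penalty is single-valued across the interface, $\vec{p}^{*-}=\vec{p}^{*+}\equiv\vec{p}^{*}$, while the normal-velocity penalty is equal and opposite, $\vec{v}^{*-}+\vec{v}^{*+}=0$. These are the discrete analogues of the interface conditions and are what make the consistent-flux terms cancel below.

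Next I would apply \eqref{eqn:disp:single:edge} on each side with $\mat{H}_K=\mat{H}$ and $\mat{S}_{JK}=\mat{S}_J$ and add. Using $\vec{p}^{*-}=\vec{p}^{*+}$, the pressure-starred contribution collapses to $-(\vec{v}^{-}+\vec{v}^{+})^{T}\mat{H}\mat{S}_J\vec{p}^{*}$, which I then expand with the formula above. The remaining terms are $(\vec{v}^{-})^{T}\mat{H}\mat{S}_J\vec{p}^{-}+(\vec{v}^{+})^{T}\mat{H}\mat{S}_J\vec{p}^{+}-(\vec{v}^{*-})^{T}\mat{H}\mat{S}_J\vec{p}^{-}-(\vec{v}^{*+})^{T}\mat{H}\mat{S}_J\vec{p}^{+}$; inserting the explicit forms for $\vec{v}^{*\pm}$ and grouping by $\vec{p}^{\pm}$ rewrites the velocity-starred contribution as $-\tfrac{1}{2}(\vec{v}^{-}-\vec{v}^{+})^{T}\mat{H}\mat{S}_J(\vec{p}^{-}-\vec{p}^{+})-\tfrac{\alpha}{2Z}(\vec{p}^{+}-\vec{p}^{-})^{T}\mat{H}\mat{S}_J(\vec{p}^{+}-\vec{p}^{-})$.

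At this stage the $\alpha$-independent (central) pieces need to cancel; this is the only step that requires bookkeeping care. I would verify term-by-term that the four bilinear cross products in $(\vec{v}^{-},\vec{v}^{+})\times(\vec{p}^{-},\vec{p}^{+})$ coming from $-\tfrac{1}{2}(\vec{v}^{-}+\vec{v}^{+})^{T}\mat{H}\mat{S}_J(\vec{p}^{-}+\vec{p}^{+})$, $(\vec{v}^{-})^{T}\mat{H}\mat{S}_J\vec{p}^{-}+(\vec{v}^{+})^{T}\mat{H}\mat{S}_J\vec{p}^{+}$, and $-\tfrac{1}{2}(\vec{v}^{-}-\vec{v}^{+})^{T}\mat{H}\mat{S}_J(\vec{p}^{-}-\vec{p}^{+})$ sum to zero. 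What survives are the two purely quadratic $\alpha$-terms: $-\tfrac{\alpha Z}{2}(\vec{v}^{-}+\vec{v}^{+})^{T}\mat{H}\mat{S}_J(\vec{v}^{-}+\vec{v}^{+})$ from the pressure penalty and $-\tfrac{\alpha}{2Z}(\vec{p}^{-}-\vec{p}^{+})^{T}\mat{H}\mat{S}_J(\vec{p}^{-}-\vec{p}^{+})$ from the velocity penalty, matching \eqref{eqn:disp:edge:con} exactly.

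The only real obstacle is the signed bookkeeping in that cancellation step, since there are several bilinear forms with signs flipping from the orientation conventions; I would handle it by expanding everything into the four monomials $(\vec{v}^{\pm})^{T}\mat{H}\mat{S}_J\vec{p}^{\pm}$ and tabulating coefficients. Non-positivity for $\alpha\ge 0$ is then immediate because $\mat{H}$ and $\mat{S}_J$ are diagonal with positive entries and $Z>0$, so both quadratic forms are non-negative, hence $\mathcal{D}^-+\mathcal{D}^+\le 0$.
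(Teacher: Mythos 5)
Your proposal is correct and follows essentially the same route as the paper: solve the penalty definitions for $\vec{p}^{*}$ and $\vec{v}^{*}$, substitute into the single-edge dissipation identity \eref{eqn:disp:single:edge}, and sum the two sides so that the central cross terms cancel and only the two $\alpha$-weighted quadratic forms survive. Your preliminary observation that $\vec{p}^{*-}=\vec{p}^{*+}$ and $\vec{v}^{*-}=-\vec{v}^{*+}$ is a slightly cleaner way to organize the cancellation than the paper's term-by-term addition of \eref{eqn:disp:con:min} and \eref{eqn:disp:con:plus}, but it is the same argument.
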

\begin{proof}
  Solving \eref{eqn:pen:con:v}--\eref{eqn:pen:con:p} for $\vec{p}^{*}$ and
  $\vec{v}^{*}$ on the minus side of the interface gives
  \begin{align}
    \vec{p}^{*} =\;&
    \frac{1}{2}\left(\vec{p}^{+}+\vec{p}^{-}\right)
    +\alpha\frac{Z}{2}\left( \vec{v}^{+}+\vec{v}^{-} \right),\\
    \vec{v}^* =\;&
    \frac{1}{2}\left(\vec{v}^{-}-\vec{v}^{+}\right)
    +\alpha\frac{1}{2Z} \left(\vec{p}^{-}-\vec{p}^{+}\right).
  \end{align}
  Substituting $\vec{p}^{*}$ and $\vec{v}^{*}$ into \eref{eqn:disp:single:edge}
  on the minus side of the interface results in (after some simplification)
  \begin{align}
    \notag
    \mathcal{D}^{-}
    =\;&
    - \frac{1}{2}{\left(\vec{{v}}^{-}\right)}^{T} \mat{H}\mat{S}_J \vec{{p}}^{+}
    -\alpha\frac{Z}{2}{\left(\vec{{v}}^{-}\right)}^{T} \mat{H}\mat{S}_J \left(
    \vec{{v}}^{+}+\vec{{v}}^{-} \right)\\
    \label{eqn:disp:con:min}
    &+ \frac{1}{2}{\left(\vec{{p}}^{-}\right)}^{T} \mat{H}\mat{S}_J \vec{{v}}^{+}
    +\alpha\frac{1}{2Z} {\left(\vec{{p}}^{-}\right)}^{T} \mat{H}\mat{S}_J
    \left(\vec{{p}}^{+}-\vec{{p}}^{-}\right).
  \end{align}
  A similar calculation for the plus side of the interface gives
  \begin{align}
    \notag
    \mathcal{D}^{+}
    =\;&
    -\frac{1}{2}{\left(\vec{{v}}^{+}\right)}^{T} \mat{H}\mat{S}_J \vec{{p}}^{-}
    -\alpha\frac{Z}{2}{\left(\vec{{v}}^{+}\right)}^{T} \mat{H}\mat{S}_J \left(
    \vec{{v}}^{-}+\vec{{v}}^{+} \right)\\
    \label{eqn:disp:con:plus}
    &+ \frac{1}{2}{\left(\vec{{p}}^{+}\right)}^{T} \mat{H}\mat{S}_J \vec{{v}}^{-}
    +\alpha\frac{1}{2Z} {\left(\vec{{p}}^{+}\right)}^{T} \mat{H}\mat{S}_J
    \left(\vec{{p}}^{-}-\vec{{p}}^{+}\right).
  \end{align}
  Edge energy dissipation \eref{eqn:disp:edge:con} then follows since
  $\mat{H}$ and $\mat{S}_{J}$ are diagonal matrices. Similarly, the
  non-positiveness
  of \eref{eqn:disp:edge:con} when $\alpha \ge 0$ follows from
  the diagonal, positive definiteness of $\mat{H}$ and $\mat{S}_{J}$.
\end{proof}

\section{General interface treatment}\label{sec:noncon:con}
Our discussion of more general interfaces begins with the coupling of two
SBP finite difference blocks that conform at the block level (i.e., no hanging
multiblock nodes). Throughout we assume that both blocks have the same
continuous coordinate transform along the interface. For example, consider the
case shown in \fref{fig:illustration} (left panel), where we assume that the
block on the right side of the interface has been transformed with
$x_{1}^{+}(\xi_{1},\xi_{2})$ and $x_{2}^{+}(\xi_{1},\xi_{2})$, and similarly the
block on the left side has been transformed with $x_{1}^{-}(\xi_{1},\xi_{2})$
and $x_{2}^{-}(\xi_{1},\xi_{2})$. With this notation, both blocks see the same
transform along the interface if $x_{1}^{+}(-1,\xi) = x_{1}^{-}(1,\xi)$ and
$x_{2}^{+}(-1,\xi) = x_{2}^{-}(1,\xi)$, where for simplicity we have assumed
that the west face of the right block is connected to the east face of the left
block. The glue grid is then parameterized by the variable $-1 \le \eta \le 1$.
Note that we assume nothing about how many grid points are along this interface,
only that they conform at the continuous level.

The core idea behind the nonconforming interface treatment is that the penalty
terms are computed on a \emph{glue grid} between the two domains. An example glue
grid between two finite difference methods is shown in \fref{fig:glue}. As can
be seen, the glue grid between the two domains is defined so that the grid
points are nested with the glue grid interval boundaries.

To move values between the finite difference grid and the glue grid the
previously defined projection operators are used. Namely the operators
$\mat{P}^{-}_{f2g}$ and $\mat{P}^{+}_{f2g}$ move values from the grid on the minus and plus
sides of the interface to the glue grid and $\mat{P}^{-}_{g2f}$ and
$\mat{P}^{+}_{g2f}$ from the glue grid to the minus and plus side finite
difference grids. We will see that since at the discrete level both sides of the
interface may sample the geometry and metric terms differently, these geometry
differences, specifically the surface Jacobian, must be taken into account in
the projection to ensure discrete stability.  To do this we project
the square root of the surface Jacobians along with
the grid values to the glue grid; since the surface Jacobian matrices
$\mat{S}_{J}^{\pm}$ are positive, diagonal matrices the square root of these
matrices are trivial to compute. Hence, the values that we work with on the glue
grid are
\begin{align}\label{eqn:project:noncon}
  \vec{\bar{p}}^{\pm} =
  \mat{P}_{f2g}^{\pm} {\left( \mat{S}_{J}^{\pm} \right)}^{1/2} \vec{p}^{\pm},
  \qquad
  \vec{\bar{v}}^{\pm} =
  \mat{P}_{f2g}^{\pm} {\left( \mat{S}_{J}^{\pm} \right)}^{1/2} \vec{v}^{\pm};
\end{align}
we note that values on the glue grid are always scaled by square root of the
surface Jacobian. Here, the vectors $\vec{p}^{\pm}$ and $\vec{v}^{\pm}$ refer
only to pressure values and normal component of velocity along the interface of
interest.

With this notation, the penalty terms along a nonconforming interface
are:
\begin{align}
  \label{eqn:pen:noncon:v}
  \vec{p}^{*} - \vec{p}^{-} =\;&
  {\left(\mat{S}_{J}^{-}\right)}^{-1/2} \mat{P}^{-}_{g2f}
  \left(\vec{\bar{p}}^{*} - \vec{\bar{p}}^{-}\right)
  +\frac{1}{2}\left[
    {\left(\mat{S}_{J}^{-}\right)}^{-1/2} \mat{P}^{-}_{g2f}\vec{\bar{p}}^{-}
    - \vec{p}^{-}
  \right],\\
  \label{eqn:pen:noncon:p}
  \vec{v}^{*} - \vec{v}^{-} =\;&
  {\left(\mat{S}_{J}^{-}\right)}^{-1/2} \mat{P}^{-}_{g2f}
  \left(\vec{\bar{v}}^{*} - \vec{\bar{v}}^{-}\right)
  +\frac{1}{2}\left[
    {\left(\mat{S}_{J}^{-}\right)}^{-1/2} \mat{P}^{-}_{g2f}\vec{\bar{v}}^{-}
    - \vec{v}^{-}
  \right],
\end{align}
where $\vec{\bar{p}}^{*} - \vec{\bar{p}}^{-}$ and $\vec{\bar{v}}^{*} -
\vec{\bar{v}}^{-}$ are defined by \eref{eqn:pen:con:v}--\eref{eqn:pen:con:p}
using the values $\vec{\bar{p}}^{\pm}$ and $\vec{\bar{v}}^{\pm}$ for
$\vec{p}^{\pm}$ and $\vec{v}^{\pm}$, respectively. As in the conforming case,
the parameter $\alpha \ge 0$ controls the central versus upwind biasness of the
scheme. The second term on the right-hand-side of \eref{eqn:pen:noncon:v} (and
\eref{eqn:pen:noncon:p}) is a projection error which arises because
$\mat{P}_{g2f}\mat{P}_{f2g}$ is not an identity operation.

An important implication of the penalty terms
\eref{eqn:pen:noncon:v}--\eref{eqn:pen:noncon:p} is that the projection
operations for the two sides are independent of the scheme on either side of
the interface and the underlying representation of the geometry. This later fact
means that the geometry does not need to be built into the
projection operation. Also note that if the interface is conforming,
the conforming penalties \eref{eqn:pen:con:v}--\eref{eqn:pen:con:p} are
equivalent to the nonconforming penalty terms
\eref{eqn:pen:noncon:v}--\eref{eqn:pen:noncon:p} if the projection matrices are
taken to be the identity matrix: $\mat{P}_{f2g}^{\pm} = \mat{P}_{g2f}^{\pm} =
\mat{I}$.

We can now state the first major result of the paper:
\begin{theorem}\label{thm:noncon:stability}
  Consider a single, nonconforming interface between two SBP blocks with
  penalty terms of the form \eref{eqn:pen:noncon:v}--\eref{eqn:pen:noncon:p}.
  Let $\mathcal{D}^{-}$ and $\mathcal{D}^{+}$ be the energy dissipation rate
  along each side of the interface, then
  \begin{align}
    \notag
    \mathcal{D}^{-} + \mathcal{D}^{+}
    =\;&
    -\alpha\frac{Z}{2}
    {\left(\vec{\bar{v}}^{-}+\vec{\bar{v}}^{+}\right)}^{T}
    \mat{M}
    \left(\vec{\bar{v}}^{-}+\vec{\bar{v}}^{+}\right)\\
    \label{eqn:disp:edge:noncon}
    &
    -\alpha\frac{1}{2Z}
    {\left(\vec{\bar{p}}^{-}-\vec{\bar{p}}^{+}\right)}^{T}
    \mat{M}
    \left(\vec{\bar{p}}^{-}-\vec{\bar{p}}^{+}\right),
  \end{align}
  which is non-positive for $\alpha \ge 0$.
\end{theorem}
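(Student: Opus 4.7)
The plan is to reduce the nonconforming case to the conforming one by transporting the edge dissipation expressions onto the glue grid via the $\mat{H}$-compatibility of the projection operators, whereupon the result of the conforming interface lemma applies essentially verbatim.

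First, I would begin from Lemma~\ref{lemma:disp:single} and write out both $\mathcal{D}^{-}$ and $\mathcal{D}^{+}$ in the form \eref{eqn:disp:single:edge}. The target is to absorb the diagonal surface-Jacobian matrix into the finite-difference vectors by setting $\tilde{\vec{v}}^{\pm} = {(\mat{S}_J^{\pm})}^{1/2}\vec{v}^{\pm}$ and $\tilde{\vec{p}}^{\pm} = {(\mat{S}_J^{\pm})}^{1/2}\vec{p}^{\pm}$, so that every bilinear form appearing in the edge dissipation reads $\tilde{\vec{v}}^{T}\mat{H}_K\tilde{\vec{p}}$ (this factoring is immediate for diagonal-norm SBP operators and matches the definition \eref{eqn:project:noncon} of the glue-grid quantities $\vec{\bar{v}}^{\pm},\vec{\bar{p}}^{\pm} = \mat{P}^{\pm}_{f2g}\tilde{\vec{v}}^{\pm},\mat{P}^{\pm}_{f2g}\tilde{\vec{p}}^{\pm}$).

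Next, I would substitute the penalty expressions \eref{eqn:pen:noncon:v}--\eref{eqn:pen:noncon:p} into $\mathcal{D}^{-}$ (and analogously into $\mathcal{D}^{+}$). Multiplying through by ${(\mat{S}_J^{-})}^{1/2}$ collapses the projection-error correction in \eref{eqn:pen:noncon:v}--\eref{eqn:pen:noncon:p} with the bare $\vec{p}^{-}$ into the clean expression $\mat{S}_J^{1/2}\vec{p}^{*} = \tfrac{1}{2}\tilde{\vec{p}}^{-} + \mat{P}^{-}_{g2f}(\vec{\bar{p}}^{*} - \tfrac{1}{2}\vec{\bar{p}}^{-})$, and similarly for $\vec{v}^{*}$. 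Then the critical identity
\begin{equation*}
\tilde{\vec{v}}^{-T}\mat{H}_K\mat{P}^{-}_{g2f}\vec{\bar{q}} \;=\; \vec{\bar{v}}^{-T}\mat{M}\vec{\bar{q}},
\end{equation*}
which follows directly from transposing the $\mat{H}$-compatibility relation $\mat{P}^{-T}_{g2f}\mat{H}_K = \mat{M}\mat{P}^{-}_{f2g}$, transfers each mixed finite-difference/glue pairing to a pure glue-grid pairing. The terms $\tilde{\vec{v}}^{-T}\mat{H}_K\tilde{\vec{p}}^{-}$ arising from the finite-difference pieces cancel after summing the three contributions of \eref{eqn:disp:single:edge} with coefficients $-\tfrac{1}{2},+1,-\tfrac{1}{2}$; this is the essential algebraic point.

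After this manipulation, $\mathcal{D}^{-}$ collapses to the glue-grid form
\begin{equation*}
  \mathcal{D}^{-} \;=\; -\vec{\bar{v}}^{-T}\mat{M}\vec{\bar{p}}^{*} + \vec{\bar{v}}^{-T}\mat{M}\vec{\bar{p}}^{-} - \vec{\bar{v}}^{*T}\mat{M}\vec{\bar{p}}^{-},
\end{equation*}
which is structurally identical to the single-edge conforming dissipation rate \eref{eqn:disp:single:edge} with the replacement $\mat{H}_K\mat{S}_J\mapsto\mat{M}$ and grid vectors replaced by glue-grid vectors. Since $\vec{\bar{p}}^{*}$ and $\vec{\bar{v}}^{*}$ are by construction the conforming central/upwind combinations of the glue-grid data \eref{eqn:pen:con:v}--\eref{eqn:pen:con:p}, the proof of \eref{eqn:disp:edge:con} applies unchanged on the glue grid, and summing $\mathcal{D}^{-}+\mathcal{D}^{+}$ yields \eref{eqn:disp:edge:noncon}; the sign follows from positive definiteness of $\mat{M}$.

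The main obstacle is the first step: the bookkeeping involved in substituting the penalty definitions and carefully tracking the projection-correction term together with the $\tfrac{1}{2}\tilde{\vec{v}}^{-T}\mat{H}_K\tilde{\vec{p}}^{-}$ pieces so that all residual finite-difference-grid contributions cancel exactly, leaving only glue-grid quadratic forms. Once this cancellation is verified, the remainder of the argument is a direct port of the conforming-interface calculation already carried out in the proof of \eref{eqn:disp:edge:con}.
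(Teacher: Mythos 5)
Your proposal is correct and follows essentially the same route as the paper's proof: substitute the nonconforming penalties into the single-edge dissipation rate, use the $\mat{H}$-compatibility relation to convert every mixed pairing into a glue-grid pairing (the verified cancellation of the residual $\tilde{\vec{v}}^{-T}\mat{H}_K\tilde{\vec{p}}^{-}$ terms with coefficients $-\tfrac{1}{2},+1,-\tfrac{1}{2}$ is exactly the step that produces the paper's intermediate identity \eref{eqn:noncon:minstar}), and then run the conforming argument on the glue grid. No gaps.
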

\begin{proof}
  Solving \eref{eqn:pen:noncon:v}--\eref{eqn:pen:noncon:p} for $\vec{p}^{*}$ and
  $\vec{v}^{*}$ on the minus side of the interface gives
  \begin{align}
    \vec{p}^{*} =\;&
    {\left(\mat{S}_{J}^{-}\right)}^{-1/2} \mat{P}^{-}_{g2f}
    \left(\vec{\bar{p}}^{*} - \vec{\bar{p}}^{-}\right)
    +\frac{1}{2}\left[
      {\left(\mat{S}_{J}^{-}\right)}^{-1/2} \mat{P}^{-}_{g2f}\vec{\bar{p}}^{-}
      + \vec{p}^{-}
    \right],\\
    \vec{v}^{*} =\;&
    {\left(\mat{S}_{J}^{-}\right)}^{-1/2} \mat{P}^{-}_{g2f}
    \left(\vec{\bar{v}}^{*} - \vec{\bar{v}}^{-}\right)
    +\frac{1}{2}\left[
      {\left(\mat{S}_{J}^{-}\right)}^{-1/2} \mat{P}^{-}_{g2f}\vec{\bar{v}}^{-}
      + \vec{v}^{-}
    \right].
  \end{align}
  Substituting $\vec{p}^{*}$ and $\vec{v}^{*}$ into \eref{eqn:disp:single:edge}
  on the minus side of the interface results in (after some simplification)
  \begin{align}
    \notag
    \mathcal{D}^{-} =\;&
    - {\left(\vec{v}^{-}\right)}^{T}\mat{H}^{-}
    {\left(\mat{S}_{J}^{-}\right)}^{1/2} \mat{P}^{-}_{g2f}
    \left(\vec{\bar{p}}^{*} - \vec{\bar{p}}^{-}\right)
    - \frac{1}{2} {\left(\vec{v}^{-}\right)}^{T}\mat{H}^{-}
    {\left(\mat{S}_{J}^{-}\right)}^{1/2} \mat{P}^{-}_{g2f}\vec{\bar{p}}^{-}
    \\
    &
    - {\left(\vec{p}^{-}\right)}^{T}\mat{H}^{-}
    {\left(\mat{S}_{j}^{-}\right)}^{1/2} \mat{P}^{-}_{g2f}
    \left(\vec{\bar{v}}^{*} - \vec{\bar{v}}^{-}\right)
    - \frac{1}{2}{\left(\vec{p}^{-}\right)}^{T}\mat{H}^{-}
    {\left(\mat{S}_{j}^{-}\right)}^{1/2} \mat{P}^{-}_{g2f}\vec{\bar{v}}^{-}.
  \end{align}
  Using property \eref{eqn:h:compatible:projections} of the projection operator
  the energy dissipation on the minus side of the interface is
  \begin{align}
    \notag
    \mathcal{D}^{-}
    =\;&
    - {\left(\vec{v}^{-}\right)}^{T}
    {\left(\mat{S}_{J}^{-}\right)}^{1/2} {\left(\mat{P}^{-}_{f2g}\right)}^{T}\mat{M}
    \left(\vec{\bar{p}}^{*} - \vec{\bar{p}}^{-}\right)
    - \frac{1}{2} {\left(\vec{v}^{-}\right)}^{T}
    {\left(\mat{S}_{J}^{-}\right)}^{1/2}
    {\left(\mat{P}^{-}_{f2g}\right)}^{T}\mat{M}\vec{\bar{p}}^{-}
    \\ \notag
    &
    - {\left(\vec{p}^{-}\right)}^{T}
    {\left(\mat{S}_{j}^{-}\right)}^{1/2} {\left(\mat{P}^{-}_{f2g}\right)}^{T}\mat{M}
    \left(\vec{\bar{v}}^{*} - \vec{\bar{v}}^{-}\right)
    - \frac{1}{2}{\left(\vec{p}^{-}\right)}^{T}
    {\left(\mat{S}_{j}^{-}\right)}^{1/2} {\left(\mat{P}^{-}_{f2g}\right)}^{T}\mat{M}\vec{\bar{v}}^{-}
    \\
    \label{eqn:noncon:minstar}
    =\;&
    - {\left(\vec{\bar{v}}^{-}\right)}^{T} \mat{M} \vec{\bar{p}}^{*}
    + {\left(\vec{\bar{v}}^{-}\right)}^{T} \mat{M} \vec{\bar{p}}^{-}
    - {\left(\vec{\bar{p}}^{-}\right)}^{T} \mat{M} \vec{\bar{v}}^{*}
  \end{align}
  where we have used that $\mat{H}$ and $\mat{S}_{J}$ commute since they are
  diagonal as well as the definitions of $\vec{\bar{p}}^{-}$ and
  $\vec{\bar{v}}^{-}$; see \eref{eqn:project:noncon}. Solving
  \eref{eqn:pen:con:v} and \eref{eqn:pen:con:p}, evaluated with
  $\vec{\bar{p}}^{\pm}$ and $\vec{\bar{v}}^{\pm}$, for $\vec{\bar{p}}^{*}$ and
  $\vec{\bar{v}}^{*}$ and substituting these values into \eref{eqn:noncon:minstar}
  gives (after minor algebraic manipulations)
  \begin{align}
    \notag
    \mathcal{D}^{-}
    =\;&
    - \frac{1}{2}{\left(\vec{\bar{v}}^{-}\right)}^{T} \mat{M} \vec{\bar{p}}^{+}
    -\alpha\frac{Z}{2}{\left(\vec{\bar{v}}^{-}\right)}^{T} \mat{M} \left(
    \vec{\bar{v}}^{+}+\vec{\bar{v}}^{-} \right)\\
    \label{eqn:noncon:minfinal}
    &+ \frac{1}{2}{\left(\vec{\bar{p}}^{-}\right)}^{T} \mat{M} \vec{\bar{v}}^{+}
    +\alpha\frac{1}{2Z} {\left(\vec{\bar{p}}^{-}\right)}^{T} \mat{M}
    \left(\vec{\bar{p}}^{+}-\vec{\bar{p}}^{-}\right).
  \end{align}
  A similar calculation for the plus side of the interface gives
  \begin{align}
    \notag
    \mathcal{D}^{+}
    =\;&
    -\frac{1}{2}{\left(\vec{\bar{v}}^{+}\right)}^{T} \mat{M} \vec{\bar{p}}^{-}
    -\alpha\frac{Z}{2}{\left(\vec{\bar{v}}^{+}\right)}^{T} \mat{M} \left(
    \vec{\bar{v}}^{-}+\vec{\bar{v}}^{+} \right)\\
    \label{eqn:disp:noncon:plus}
    &+ \frac{1}{2}{\left(\vec{\bar{p}}^{+}\right)}^{T} \mat{M} \vec{\bar{v}}^{-}
    +\alpha\frac{1}{2Z} {\left(\vec{\bar{p}}^{+}\right)}^{T} \mat{M}
    \left(\vec{\bar{p}}^{-}-\vec{\bar{p}}^{+}\right).
  \end{align}
  Summing
  \eref{eqn:noncon:minfinal} and \eref{eqn:disp:noncon:plus} then gives
  \eref{eqn:disp:edge:noncon}. Similarly, the non-positiveness of
  \eref{eqn:disp:edge:noncon} when $\alpha \ge 0$ follows from the positive
  definiteness of $\mat{M}$.
\end{proof}

\paragraph{Comparison with Mattsson and Carpenter~\cite{MattssonCarpenter2010}
Interpolation Operators}
As noted above, Mattsson and Carpenter have previously proposed a set of
SBP-compatible operators for coupling conforming (at the block level) SBP finite
difference methods with a fixed refinement ratio~\cite{MattssonCarpenter2010}.
These operators, which Mattsson and Carpenter denote as $\mat{I}_{F2C}$ and
$\mat{I}_{C2F}$ with $F2C$ and $C2F$ denoting fine to coarse and vice versa,
move a solution all the way from one finite difference grid to the next. Thus,
an important difference with the projection operators we employ here is that
there is an intermediate glue grid which allows the projection
operators to be defined independent of the coupling; Mattsson and Carpenter's
operators depend on both the refinement ratio and SBP operator on either side of
the interface. Additionally, Mattsson and Carpenter had to introduce
additional constraints in order to ensure stability when upwind bias
penalties/numerical fluxes are used; see Equation (16) of Mattsson and
Carpenter~\cite{MattssonCarpenter2010}. In their paper, Mattsson and Carpenter
note that they were unable to construct operators which always satisfy these
constraints and for some cases dissipation was introduced to stabilize the
method. In this work the use of the glue grid allows us to overcome these extra
constraints on the operators as well as the need to introduce dissipation on the
interface; note that in the method we propose here there is dissipation on the
interface and it is controlled by the upwind parameter $\alpha$. Finally, the
compound operators $\mat{I}_{F2C} = \mat{P}^{+}_{g2f}\mat{P}_{f2g}^{-}$ and
$\mat{I}_{C2F} = \mat{P}^{-}_{g2f}\mat{P}_{f2g}^{+}$ satisfy the consistency
constraints (15) and the accuracy conditions of Definition 2.4
of Mattsson and Carpenter~\cite{MattssonCarpenter2010} (though it should be
noted that they are numerically distinct).

\subsection{Many-to-many interfaces}\label{sec:noncon:many}
We now move on to the case when several finite difference blocks are coupled
together along a single interface. As will be seen, the treatment for this case
is identical to the one-to-one interface case except that the surface
Jacobians of the blocks along the coupling interface must be scaled to put
them into the glue grid space. An example of the sort of coupling considered is
shown in the center panel of \fref{fig:illustration} where we are interested in
the treatment of the T-intersection (denoted with a thick line); without loss of
generality we assume that the interface occurs in the $\xi_{2}$ direction for
all blocks.

As noted above, we parameterize the glue space with a variable $-1 \le \eta \le
1$. We let $N^{-}$ be then number of blocks along the minus side of the
interface and each block $k$, $1 \le k \le N^{-}$, overlaps the glue interface
over $\beta^{-(k-1)} \le \eta \le \beta^{-(k)}$ with $\beta^{-(0)} = -1$ and
$\beta^{-(N^{-})} = 1$.
We then define the affine interface transform to take each block interface
(which run from $-1 \le \xi \le 1$) to the appropriate portion of the glue
interface:
\begin{align}
  \label{eqn:eta:xi}
  \eta^{-(k)} &= \frac{\beta^{-(k-1)}(1-\xi_{2}^{-(k)}) + \beta^{-(k)}(1+\xi_{2}^{-(k)})}{2},\\
  \label{eqn:xi:eta}
  \xi_{2}^{-(k)} &=
  \frac{(\eta^{-(k)}-\beta^{-(k)})+(\eta^{-(k)}-\beta^{-(k-1)})}{\beta^{-(k)}-\beta^{-(k-1)}},
\end{align}
where $1 \le k \le N^{-}$ denotes which of the blocks along this side of the
interface we are considering. Since these are affine transforms, their effect on
the surface Jacobian (see \eref{eqn:surf:Jacobian}) will be a constant scaling
of
\begin{align}
  \pd{\xi_{2}^{-(k)}}{\eta^{-(k)}} &= \frac{2}{\beta^{-(k)} - \beta^{-(k-1)}} =
  \frac{1}{\Delta^{-(k)}}.
\end{align}
Here $\Delta^{-(k)}$ is the fraction of the interface which intersects block
$k$. As similar construction is used for the $N^{+}$ blocks on the other side of
the interface.

Projections to the glue and the penalty terms for each block are then defined as
in \eref{eqn:project:noncon}, \eref{eqn:pen:noncon:v}, and
\eref{eqn:pen:noncon:p} except with the surface Jacobians scaled by
$1/\Delta^{\pm(k)}$:
\begin{align}\label{eqn:project:noncon:scaled}
  \vec{\bar{p}}^{\pm(k)} =\;&
  \mat{P}_{f2g}^{\pm(k)} {\left( \frac{\mat{S}_{J}^{\pm(k)}}{\Delta^{\pm(k)}}\right)}^{1/2} \vec{p}^{\pm(k)},
  \qquad
  \vec{\bar{v}}^{\pm(k)} =
  \mat{P}_{f2g}^{\pm(k)} {\left( \frac{\mat{S}_{J}^{\pm(k)}}{\Delta^{\pm(k)}}\right)}^{1/2} \vec{v}^{\pm(k)},\\
  \notag
  \vec{p}^{*(k)} - \vec{p}^{-(k)} =\;&
  {\left(\frac{\mat{S}_{J}^{-(k)}}{\Delta^{-(k)}}\right)}^{-1/2} \mat{P}^{-(k)}_{g2f}
  \left(\vec{\bar{p}}^{*(k)} - \vec{\bar{p}}^{-(k)}\right)\\
  \label{eqn:pen:noncon:v:scaled}
  &+\frac{1}{2}\left[
    {\left(\frac{\mat{S}_{J}^{-(k)}}{\Delta^{-(k)}}\right)}^{-1/2} \mat{P}^{-(k)}_{g2f}\vec{\bar{p}}^{-(k)} - \vec{p}^{-(k)}
  \right],\\
  \notag
  \vec{v}^{*(k)} - \vec{v}^{-(k)} =\;&
  {\left(\frac{\mat{S}_{J}^{-(k)}}{\Delta^{-(k)}}\right)}^{-1/2} \mat{P}^{-(k)}_{g2f}
  \left(\vec{\bar{v}}^{*(k)} - \vec{\bar{v}}^{-(k)}\right)\\
  \label{eqn:pen:noncon:p:scaled}
  &+\frac{1}{2}\left[
    {\left(\frac{\mat{S}_{J}^{-(k)}}{\Delta^{-(k)}}\right)}^{-1/2} \mat{P}^{-(k)}_{g2f}\vec{\bar{v}}^{-(k)} - \vec{v}^{-(k)}
  \right].
\end{align}
Before going on to state the edge dissipation rates, we note that the change of
variables \eref{eqn:eta:xi}--\eref{eqn:xi:eta} requires a slight modification to
the H-compatible definition \eref{def:SBPprojection}, namely we now use the
definition
\begin{align}
  \Delta^{-(k)}{\left(\mat{P}^{-(k)}_{g2f}\right)}^{T}\mat{H}^{-(k)} = \mat{M}^{-(k)}\mat{P}_{f2g}^{-(k)}.
\end{align}
Here, $\mat{H}^{-(k)}$ is the 1-D SBP norm matrix for a grid from $-1$ to $1$ and
$\mat{M}^{-(k)}$ is the mass matrix for the portion of the glue grid running
from $\beta^{-(k-1)}$ to $\beta^{-(k)}$. The difference between the domains of
$\mat{H}^{-(k)}$ and $\mat{M}^{-(k)}$ gives rise to the $\Delta^{-(k)}$ in the
compatibility condition.

We define the solutions vectors on the glue as
\begin{align}
  \vec{\bar{v}}^{\pm} &=
  \begin{bmatrix}
    \vec{\bar{v}}^{\pm(1)}\\
    \vdots\\
    \vec{\bar{v}}^{\pm(N^{\pm})}\\
  \end{bmatrix},
  &
  \vec{\bar{p}}^{\pm} &=
  \begin{bmatrix}
    \vec{\bar{p}}^{\pm(1)}\\
    \vdots\\
    \vec{\bar{p}}^{\pm(N^{\pm})}\\
  \end{bmatrix}
\end{align}
and the block diagonal glue mass matrix
\begin{align}
  \mat{M}
  =
  \begin{bmatrix}
    \mat{M}^{-(1)}\\
    &\ddots\\
    &&\mat{M}^{-(N^{-})}
  \end{bmatrix}
  =
  \begin{bmatrix}
    \mat{M}^{+(1)}\\
    &\ddots\\
    &&\mat{M}^{+(N^{+})}
  \end{bmatrix},
\end{align}
where we note that it is equivalent to define $\mat{M}$ from either the plus or
minus side block mass matrices since they integrate the same space of function
after stacking. With these definitions, we now have the following theorem which
guarantees stability of the interface treatment.

\begin{theorem}\label{thm:many:stability}
  Consider a single, nonconforming interface with $N^{-}$ and $N^{+}$ SBP
  blocks on either side of the interface with
  penalty terms of the form
  \eref{eqn:pen:noncon:v:scaled}--\eref{eqn:pen:noncon:p:scaled}.
  Let $\mathcal{D}^{-}$ and $\mathcal{D}^{+}$ be the energy dissipation rate
  along each side of the interface, then
  \begin{align}
    \notag
    \mathcal{D}^{-} + \mathcal{D}^{+}
    =\;&
    -\alpha\frac{Z}{2}
    {\left(\vec{\bar{v}}^{-}+\vec{\bar{v}}^{+}\right)}^{T}
    \mat{M}
    \left(\vec{\bar{v}}^{-}+\vec{\bar{v}}^{+}\right)\\
    &
    -\alpha\frac{1}{2Z}
    {\left(\vec{\bar{p}}^{-}-\vec{\bar{p}}^{+}\right)}^{T}
    \mat{M}
    \left(\vec{\bar{p}}^{-}-\vec{\bar{p}}^{+}\right),
  \end{align}
  is the non-positive for $\alpha \ge 0$.
\end{theorem}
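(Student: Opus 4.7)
The plan is to reduce Theorem~\ref{thm:many:stability} to a block-by-block application of the argument used in Theorem~\ref{thm:noncon:stability}, exploiting the fact that the glue grid is simply the disjoint union of the per-block glue pieces. First I would fix one block $k$ on the minus side and mimic the derivation leading to \eqref{eqn:noncon:minstar}: substituting the solved-out penalty terms from \eqref{eqn:pen:noncon:v:scaled}--\eqref{eqn:pen:noncon:p:scaled} into the edge dissipation identity \eqref{eqn:disp:single:edge}, the projection-error correction cancels exactly as before and one is left with cross terms of the form $(\vec{v}^{-(k)})^T \mat{H}^{-(k)} (\mat{S}_J^{-(k)}/\Delta^{-(k)})^{1/2} \mat{P}_{g2f}^{-(k)}(\cdot)$. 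At this point the scaled compatibility condition $\Delta^{-(k)}(\mat{P}_{g2f}^{-(k)})^T \mat{H}^{-(k)} = \mat{M}^{-(k)} \mat{P}_{f2g}^{-(k)}$ is applied; the factor $\Delta^{-(k)}$ in the denominator of $\mat{S}_J^{-(k)}/\Delta^{-(k)}$ is precisely what is needed to absorb the $\Delta^{-(k)}$ produced by compatibility, so that the dissipation for block~$k$ reduces to the clean glue-grid expression
\begin{align*}
  \mathcal{D}^{-(k)} =
  -(\vec{\bar{v}}^{-(k)})^T \mat{M}^{-(k)} \vec{\bar{p}}^{*(k)}
  +(\vec{\bar{v}}^{-(k)})^T \mat{M}^{-(k)} \vec{\bar{p}}^{-(k)}
  -(\vec{\bar{p}}^{-(k)})^T \mat{M}^{-(k)} \vec{\bar{v}}^{*(k)},
\end{align*}
matching \eqref{eqn:noncon:minstar} exactly but on the $k$th glue piece.

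Next I would sum over $k=1,\dots,N^{-}$. Because $\mat{M}$ is block diagonal with blocks $\mat{M}^{-(k)}$ and the stacked vectors $\vec{\bar{v}}^{-},\vec{\bar{p}}^{-},\vec{\bar{p}}^{*},\vec{\bar{v}}^{*}$ collect the per-block glue values, the sum telescopes into
\begin{align*}
  \mathcal{D}^{-} = \sum_{k=1}^{N^{-}} \mathcal{D}^{-(k)}
  = -(\vec{\bar{v}}^{-})^T \mat{M} \vec{\bar{p}}^{*}
    +(\vec{\bar{v}}^{-})^T \mat{M} \vec{\bar{p}}^{-}
    -(\vec{\bar{p}}^{-})^T \mat{M} \vec{\bar{v}}^{*},
\end{align*}
which is structurally identical to the one-to-one minus-side expression. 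The same reasoning applied to the plus side yields the analogous $\mathcal{D}^{+}$. Because $\mat{M}$ admits both the minus-side and the plus-side block-diagonal factorizations (they integrate the same stacked glue space), the two sides see a common inner product, and the coupling through $\vec{\bar{p}}^{*},\vec{\bar{v}}^{*}$ (defined via the conforming penalty formula \eqref{eqn:pen:con:v}--\eqref{eqn:pen:con:p} on glue quantities) closes the argument exactly as in the proof of Theorem~\ref{thm:noncon:stability}, giving the claimed quadratic form in $\vec{\bar{v}}^{-}+\vec{\bar{v}}^{+}$ and $\vec{\bar{p}}^{-}-\vec{\bar{p}}^{+}$. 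Non-positivity for $\alpha\ge 0$ follows from positive-definiteness of $\mat{M}$.

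The main obstacle I anticipate is bookkeeping around the affine rescaling: one must verify that the factor $1/\Delta^{\pm(k)}$ inserted into the surface Jacobian in \eqref{eqn:project:noncon:scaled}--\eqref{eqn:pen:noncon:p:scaled} is consistent with the modified compatibility condition on the restricted glue piece, and that the stacked $\mat{M}$ really does equal both $\mathrm{diag}(\mat{M}^{-(k)})$ and $\mathrm{diag}(\mat{M}^{+(k)})$ despite the partitions $\{\beta^{-(k)}\}$ and $\{\beta^{+(k)}\}$ generally being different. The latter is a statement about the glue space being the common refinement of the two partitions (in particular, the ``finest glue'' convention from Lemma~\ref{lemma:proj:stack} ensures both decompositions act on the same underlying polynomial space). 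Once this accounting is set up cleanly, the remainder of the proof is essentially a copy of Theorem~\ref{thm:noncon:stability} with scalar edge quantities replaced by their block-stacked counterparts.
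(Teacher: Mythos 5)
Your proposal is correct and follows essentially the same route as the paper: derive the per-block glue-grid dissipation identity using the $\Delta^{\pm(k)}$-scaled compatibility condition, stack over blocks using the block-diagonal structure of $\mat{M}$, and then invoke the one-to-one argument of Theorem~\ref{thm:noncon:stability} verbatim. Your flagged concern about the two partitions sharing a common glue space is exactly the point the paper handles by asserting that $\mat{M}$ can be assembled equivalently from either side's block mass matrices.
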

\begin{proof}
  Solving the penalty terms for
  $\vec{p}^{*(k)}$ and $\vec{v}^{*(k)}$ and substituting into
  \eref{eqn:disp:single:edge} gives (after a calculation similar to
  \eref{eqn:noncon:minstar}) the edge dissipation rate
  \begin{align}
    \notag
    \mathcal{D}^{-(k)}
    =\;&
    - {\left(\vec{\bar{v}}^{-(k)}\right)}^{T}\mat{M}^{-(k)} \vec{\bar{p}}^{*(k)}
    + {\left(\vec{\bar{v}}^{-(k)}\right)}^{T}\mat{M}^{-(k)}
    \vec{\bar{p}}^{-(k)}\\
    \label{eqn:many:sing:disp}
    &- {\left(\vec{\bar{p}}^{-(k)}\right)}^{T}\mat{M}^{-(k)}
    \vec{\bar{v}}^{*(k)}.
  \end{align}
  Defining the vectors
  \begin{align}
    \vec{\bar{v}}^{*} &=
    \begin{bmatrix}
      \vec{\bar{v}}^{*(1)}\\
      \vdots\\
      \vec{\bar{v}}^{*(N^{-})}\\
    \end{bmatrix},
    &
    \vec{\bar{p}}^{*} &=
    \begin{bmatrix}
      \vec{\bar{p}}^{*(1)}\\
      \vdots\\
      \vec{\bar{p}}^{*(N^{-})}\\
    \end{bmatrix},
  \end{align}
  the sum of the contributions from all the blocks on the minus side of the
  interface can be written as
  \begin{align}
    \mathcal{D}^{-} = \sum_{k=1}^{N^{-}} \mathcal{D}^{-(k)}
    =\;&
    - {\left(\vec{\bar{v}}^{-}\right)}^{T}\mat{M}\vec{\bar{p}}^{*}
    + {\left(\vec{\bar{v}}^{-}\right)}^{T}\mat{M}\vec{\bar{p}}
    - {\left(\vec{\bar{p}}^{-}\right)}^{T}\mat{M}\vec{\bar{v}}^{*}.
  \end{align}
  Since this equation is identical to \eref{eqn:noncon:minfinal}, the remainder
  of the proof is identical to that of Theorem~\ref{thm:noncon:stability}.
\end{proof}

\subsection{Connecting with discontinuous Galerkin methods}\label{sec:sbp:dg}
Besides allowing for the stable coupling of general finite difference grids, the
projection operators defined above can also be used to couple finite difference
methods with numerical methods in variational form. To demonstrate this we
consider the coupling of SBP finite difference methods with a curvilinear,
triangle based DG method.  We begin by introducing a
triangular, curved element DG method and then proceed to view each DG element as
a small SBP finite difference block, which leads immediately to a stable
coupling between the methods. Though we use one
particular DG method, coupling with other formulations is possible as the
coupling is purely done at the numerical flux level so any scheme that gives rise to
similar interface terms will be stable. In what follows we only
highlight the essential parts of the DG formulation that are necessary to couple
it with SBP methods and for a more complete description of DG the interested
reader is directed to, for instance, Hesthaven and
Warburton~\cite{HesthavenWarburton2008}.

To introduce the DG method we start with the variational form of the governing
equations \eref{eqn:var:consmon}--\eref{eqn:var:hookes} for a DG element
$\Omega_e$ whose reference element is $\tilde{\Omega}$; for the examples in
this paper we use a triangular reference element. Applying integration by
parts to the conservation of momentum \eref{eqn:var:consmon} in order
to move the spatial derivatives from the solution $p$ to the test functions
$w_{i}$ we get conservation of momentum in the form
\begin{align}
  \label{eqn:var:consmon:weak}
  &\int_{\tilde{\Omega}} \left[ w_{i}\rho J \pd{v_{i}}{t}
  - \pd{w_{i}}{\xi_{1}}J\pd{\xi_{1}}{x_{i}} p
  - \pd{w_{i}}{\xi_{2}}J\pd{\xi_{2}}{x_{i}} p\right]\;dA
  =
  -\int_{\partial \tilde{\Omega}} w_{i} S_{J} n_{i} p^*ds.
\end{align}
Notice that in the right-hand side of \eref{eqn:var:consmon:weak} depends only
on $p^*$, i.e., the value which will become the numerical flux. The form of
\eref{eqn:var:consmon:weak} with the derivative on the test function and
\eref{eqn:var:hookes} with the derivative on the trial function
is sometimes referred to as the skew-symmetric form of the variational
equations.

Discretizing the variational forms \eref{eqn:var:consmon:weak} and
\eref{eqn:var:hookes} in space using the DG method gives rise to the following
semi-discretization on each element:
\begin{align}
  \label{eqn:dg:consmon}
  \rho \mat{M}_{J} \fd{\vec{v}_{i}}{t} =&
  \mat{D}_{1}^{T} \mat{M}_{1i} \vec{p}
  + \mat{D}_{2}^{T} \mat{M}_{2i} \vec{p}
  - \sum_{K=1}^{3} \mat{L}_{K}^{T} \mat{P}_{bc}^{T} \mat{n}_{iK}
  \mat{\Omega}_{bc} \mat{S}_{JK} \vec{p}_{K}^{*},\\
  \notag
  \mat{M}_{J} \fd{\vec{p}}{t} =&
  -\lambda\left(
    \mat{M}_{11} \mat{D}_{1} \vec{v}_{1}
    +
    \mat{M}_{21} \mat{D}_{2} \vec{v}_{1}
    +
    \mat{M}_{12} \mat{D}_{1} \vec{v}_{2}
    +
    \mat{M}_{22} \mat{D}_{2} \vec{v}_{2}
  \right)\\
  \label{eqn:dg:hookes}
  &- \sum_{K=1}^{3}\lambda\mat{L}_{K}^{T} \mat{P}_{bc}^{T} \mat{\Omega}_{bc}
  \mat{S}_{JK} \left(\vec{v}_{K}^* - \vec{v}_{K}^{-}\right),
\end{align}
where the vector $\vec{v}_{K}^{-}$ is the normal component of velocity along
edge $K$ of the element evaluated at the cubature points:
\begin{align}
  \label{eqn:dg:normal:vel}
  \vec{v}^{-}_{K} = \mat{n}_{1K}\mat{P}_{bc}\mat{L}_{K} \vec{v}_{1}
    + \mat{n}_{2K}\mat{P}_{bc}\mat{L}_{K} \vec{v}_{2}.
\end{align}
Here $\mat{L}_{K}$ takes the volume terms to edge $K$ of the element
and $\mat{L}_{K}^{T}$ takes edge $K$ terms to the volume; this is similar
to the behavior of $\mat{e}_{W/E} \kron \mat{I}$ and  $\mat{I} \kron
\mat{e}_{N/S}$ in the SBP method.
Also as in the SBP method, $\mat{D}_{1}$ and $\mat{D}_{2}$ are the reference
element differentiation matrices for the two reference coordinate directions.
Since we will be using curved triangular elements, integration is done using
a cubature in the volume and quadratures along the edges of the elements.
Thus we introduce the projection matrices
$\mat{P}_{c}$ and $\mat{P}_{bc}$ that project from the volume and edge
approximations to the volume and edge cubature points, respectively. At the
cubature locations, the matrices $\mat{\Omega}_{c}$ and $\mat{\Omega}_{bc}$ are
diagonal matrices of the integration weights for the volume and an edge,
respectively.  To ensure stability of the method, we will assume that
$\mat{\Omega}_{c}$ and $\mat{\Omega}_{bc}$ are positive definite. The
element mass matrices in the discretization are defined as
\begin{alignat}{2}
  \mat{M}_{J}  &= \mat{P}_{c}^{T} \mat{\Omega}_{c} \mat{J} \mat{P}_{c},\quad&
  \mat{M}_{ij} &= \mat{P}_{c}^{T} \mat{\Omega}_{c} \mat{J}
                  \mat{\pd{\xi_{i}}{x_{j}}} \mat{P}_{c}.
\end{alignat}
Here the diagonal matrices $\mat{J}$ and $\mat{\pd{\xi_{i}}{x_{j}}}$ are,
respectively, the Jacobian determinant and metric derivatives defined at the
cubature points. The diagonal matrices $\mat{S}_{JK}$ and $\mat{n}_{iK}$
are the surface Jacobian and the components of the unit normal for edge $K$,
respectively, defined at the cubature points.

Defining the energy in a DG element as
\begin{align}\label{eqn:energy:dg}
  E =
  \frac{\rho}{2}\vec{v}_{1}^{T}\mat{M}_{J}\vec{v}_{1}
  + \frac{\rho}{2}\vec{v}_{2}^{T}\mat{M}_{J}\vec{v}_{2}
  + \frac{1}{2\lambda}\vec{p}^{T}\mat{M}_{J}\vec{p}
\end{align}
as well as the edge projected pressures
\begin{align}\label{eqn:dg:edge:pres}
  \vec{p}_{K}^{-} = \mat{P}_{bc}\mat{L}_{K} \vec{p},
\end{align}
the energy dissipation rate for a single DG element can be characterized by the
following lemma.

\begin{lemma}\label{lemma:disp:single:dg}
  The single DG block discretization \eref{eqn:dg:consmon}--\eref{eqn:dg:hookes}
  has the energy dissipation
  rate
  \begin{align}
    \fd{E}{t} = & \sum_{K=1}^{3} \mathcal{D}_{K},\\
    \label{eqn:disp:edge:dg}
    \mathcal{D}_{K} = &
    - {\left(\vec{v}_{K}^{-}\right)}^{T}\mat{\Omega}_{bc}\mat{S}_{JK}\vec{p}_{K}^{*}
    - {\left(\vec{p}_{K}^{-}\right)}^{T}\mat{\Omega}_{bc}\mat{S}_{JK}
    \left(\vec{v}^{*}_{K} - \vec{v}^{-}_{K}\right),
  \end{align}
  with energy as defined in \eref{eqn:energy:dg}
\end{lemma}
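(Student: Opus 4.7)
The plan is to mimic the energy argument that was used for the SBP block case (Lemma~\ref{lemma:disp:single}), exploiting the fact that the DG semi-discretization \eref{eqn:dg:consmon}--\eref{eqn:dg:hookes} is written in a skew-symmetric ``mass matrix / stiffness matrix'' form that admits discrete integration-by-parts at the element level.

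First I would differentiate the element energy \eref{eqn:energy:dg} in time and substitute the evolution equations, obtaining
\begin{equation*}
\fd{E}{t}
= \vec{v}_1^T\!\left(\mat{M}_J\,\rho\fd{\vec{v}_1}{t}\right)
+ \vec{v}_2^T\!\left(\mat{M}_J\,\rho\fd{\vec{v}_2}{t}\right)
+ \tfrac{1}{\lambda}\vec{p}^T\!\left(\mat{M}_J\fd{\vec{p}}{t}\right).
\end{equation*}
The key structural observation is that each mixed mass matrix
$\mat{M}_{ij}=\mat{P}_c^T\mat{\Omega}_c\mat{J}\mat{\pd{\xi_i}{x_j}}\mat{P}_c$
is symmetric because $\mat{\Omega}_c$, $\mat{J}$, and $\mat{\pd{\xi_i}{x_j}}$ are all diagonal. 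Using this and the transpose identity $\vec{a}^T\mat{D}_j^T\mat{M}_{ji}\vec{b}=\vec{b}^T\mat{M}_{ji}\mat{D}_j\vec{a}$, the volume contributions from the two velocity equations will pair up exactly with the four volume terms from the pressure equation and cancel identically. This is the discrete analog of integration by parts that makes the skew-symmetric formulation energy neutral in the interior.

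What remains after this cancellation is the sum over the three edges of the surface contributions. From the velocity equations, the $i$-th component contributes
\begin{equation*}
-\sum_{K=1}^{3}\vec{v}_i^T\mat{L}_K^T\mat{P}_{bc}^T\mat{n}_{iK}\mat{\Omega}_{bc}\mat{S}_{JK}\vec{p}_K^{*},
\end{equation*}
and summing over $i=1,2$ and invoking the definition \eref{eqn:dg:normal:vel} collapses $n_{1K}\mat{P}_{bc}\mat{L}_K\vec{v}_1+n_{2K}\mat{P}_{bc}\mat{L}_K\vec{v}_2$ into $\vec{v}_K^{-}$, producing the first term of $\mathcal{D}_K$. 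The pressure equation's boundary contribution becomes the second term of $\mathcal{D}_K$ directly from \eref{eqn:dg:edge:pres} after recognizing $\vec{p}^T\mat{L}_K^T\mat{P}_{bc}^T=(\vec{p}_K^{-})^T$.

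The proof is essentially bookkeeping; the only subtle step is verifying the interior cancellation, since one must carefully transpose each of the four mixed terms and rely on the symmetry of $\mat{M}_{ij}$. The main obstacle, and what distinguishes this from the SBP case, is that the DG mass matrix $\mat{M}_J$ is not diagonal, so one cannot freely commute it with other operators; however, this does not enter the calculation because $\mat{M}_J$ is absorbed on the left-hand side when the equations are multiplied by the solution vectors and never needs to be inverted in the energy identity.
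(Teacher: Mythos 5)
Your proposal is correct and follows the same route as the paper's proof: differentiate the element energy, substitute the semi-discretization, let the volume terms cancel by the skew-symmetric pairing (which works because each $\mat{M}_{ij}$ and $\mat{M}_J$ is symmetric, the middle factors being diagonal), and collapse the remaining surface terms using the definitions of $\vec{v}_K^{-}$ and $\vec{p}_K^{-}$. The paper states this only as ``insert and simplify,'' so your write-up simply supplies the bookkeeping it omits, and all of your steps check out.
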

\begin{proof}
  Equation~\eqref{eqn:disp:edge:dg} follows directly by inserting
  \eref{eqn:dg:consmon}--\eref{eqn:dg:hookes} into the time derivative of
  \eref{eqn:energy:dg}
  \begin{align}
    \fd{E}{t} = &
    \rho  \vec{v}_{1}^{T}\mat{M}_{J}\fd{\vec{v}_{1}}{t}
    + \rho\vec{v}_{2}^{T}\mat{M}_{J}\fd{\vec{v}_{2}}{t}
    + \frac{1}{\lambda}\vec{p}^{T}\mat{M}_{J}\fd{\vec{p}}{t},
  \end{align}
  and simplifying using the definition of the edge projected pressures
  \eref{eqn:dg:edge:pres} and normal velocity \eref{eqn:dg:normal:vel}.
  The non-positiveness of \eqref{eqn:disp:edge:dg}
  follows from the fact that $\mat{\Omega}_{bc}$ and $\mat{S}_{JK}$ are
  diagonal, positive definite matrices.
\end{proof}

\subsubsection{Boundary and DG-to-DG numerical flux}
When an edge occurs on the physical boundary, the boundary condition $p=0$ is
enforced with the numerical flux
\begin{align}
  \label{eqn:exterior:dg}
  \vec{p}_{K}^* = 0,
  \qquad
  \vec{v}_{K}^* - \vec{v}_{K} = \alpha \frac{\vec{p}_{K}}{Z}.
\end{align}
Similarly, the numerical flux between two DG elements is taken to be
\begin{align}
  \label{eqn:dg:con:v}
  \vec{p}^{*} =\;&
  \frac{1}{2}\left(\vec{p}^{+}+\vec{p}^{-}\right)
  +\alpha\frac{Z}{2}\left( \vec{v}^{+}+\vec{v}^{-} \right),\\
  \label{eqn:dg:con:p}
  \vec{v}^{*} - \vec{v}^{-} =\;&
  -\frac{1}{2}\left(\vec{v}^{+}+\vec{v}^{-}\right)
  -\alpha\frac{1}{2Z} \left(\vec{p}^{+}-\vec{p}^{-}\right).
\end{align}
In both cases, as in the SBP case, the parameter $\alpha$ controls the upwind nature of
the numerical flux. For stability $\alpha \ge 0$ with the central flux resulting if
$\alpha = 0$ and the fully upwind flux if $\alpha = 1$. Note that if
$\vec{p}^{-}$ is subtracted from $\vec{p}^{*}$ these are identical to the
penalty terms previously defined for SBP boundaries \eref{eqn:exterior} and
conforming interfaces \eref{eqn:pen:con:v}--\eref{eqn:pen:con:p}.

\begin{lemma}
  If edge $K$ of a DG element is an exterior boundary with penalty terms of the
  form \eref{eqn:exterior:dg} then the energy dissipation rate for the edge is
  \begin{align}
    \mathcal{D}_{K} = -\frac{\alpha}{Z} {\left(\vec{p}_{K}^{-}\right)}^{T}
    \mat{\Omega}_{bc}\mat{S}_{JK} \vec{p}^{-}_{K},
  \end{align}
  which is non-positive if $\alpha \ge 0$.
\end{lemma}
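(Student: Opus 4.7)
The plan is to apply Lemma \ref{lemma:disp:single:dg} directly, substituting the boundary numerical flux \eref{eqn:exterior:dg} into the single-edge dissipation identity \eref{eqn:disp:edge:dg}. Since the lemma already delivers a clean closed-form expression for $\mathcal{D}_K$ in terms of $\vec{p}_K^*$, $\vec{v}_K^*$, $\vec{p}_K^-$, and $\vec{v}_K^-$, essentially no further structural work is needed; the entire argument is a substitution followed by a sign check. This mirrors exactly the pattern used in the SBP case \eref{eqn:disp:single:edge:bnd} earlier in the paper.

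First I would set $\vec{p}_K^* = 0$, which annihilates the first term on the right-hand side of \eref{eqn:disp:edge:dg}. Next I would identify the $\vec{p}_K$ and $\vec{v}_K$ appearing in the boundary condition \eref{eqn:exterior:dg} with the edge-projected interior traces $\vec{p}_K^-$ and $\vec{v}_K^-$, so that the second piece of the flux specification reads
\begin{equation*}
  \vec{v}_K^{*} - \vec{v}_K^{-} \;=\; \frac{\alpha}{Z}\,\vec{p}_K^{-}.
\end{equation*}
Inserting this into the remaining term of \eref{eqn:disp:edge:dg} immediately yields
\begin{equation*}
  \mathcal{D}_K \;=\; -\frac{\alpha}{Z}\,
  {\left(\vec{p}_K^{-}\right)}^{T}\,\mat{\Omega}_{bc}\,\mat{S}_{JK}\,\vec{p}_K^{-},
\end{equation*}
which is the claimed identity.

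Finally I would conclude non-positiveness from the structural assumptions already in force: $\mat{\Omega}_{bc}$ is a diagonal matrix of positive cubature weights (positive definite by assumption in the introduction of the DG method), $\mat{S}_{JK}$ is a diagonal matrix of positive surface Jacobian values at the cubature points, and $\alpha \ge 0$. The product of these two diagonal positive definite matrices is again diagonal positive definite, so the quadratic form in $\vec{p}_K^-$ is non-negative and the overall sign is non-positive.

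Because this proof is a one-line substitution into a previously established identity, there is no genuine obstacle; the only point requiring care is the notational identification $\vec{p}_K \leftrightarrow \vec{p}_K^-$ and $\vec{v}_K \leftrightarrow \vec{v}_K^-$ in the DG setting, which matches the convention used for the exterior-boundary SBP treatment and is consistent with how $\vec{v}_K^-$ was defined in \eref{eqn:dg:normal:vel} and $\vec{p}_K^-$ in \eref{eqn:dg:edge:pres}. No additional lemmas beyond Lemma \ref{lemma:disp:single:dg} are invoked.
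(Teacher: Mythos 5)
Your proof is correct and follows exactly the same route as the paper: substitute the boundary flux \eref{eqn:exterior:dg} into the edge dissipation identity \eref{eqn:disp:edge:dg} of Lemma~\ref{lemma:disp:single:dg} and check the sign of the resulting quadratic form. The notational identification of $\vec{p}_K,\vec{v}_K$ with the edge traces $\vec{p}_K^-,\vec{v}_K^-$ is the intended reading, so no gap remains.
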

\begin{proof}
  Follows directly by using \eref{eqn:exterior:dg} in \eref{eqn:disp:edge:dg}.
\end{proof}
\begin{lemma}
  Consider a single interface between two DG elements with penalty terms of the
  form \eref{eqn:dg:con:v}--\eref{eqn:dg:con:p}. Let $\mathcal{D}^{-}$ and
  $\mathcal{D}^{+}$ be the energy dissipation rate along each side of the
  interface, then
  \begin{align}
    \notag
    \mathcal{D}^{-} + \mathcal{D}^{+}
    =\;&
    \label{eqn:disp:edge:dg:interior}
    -\alpha\frac{Z}{2}
    {\left(\vec{v}^{-}+\vec{v}^{+}\right)}^{T}
    \mat{\Omega}_{bc} \mat{S}_{J}
    \left(\vec{v}^{-}+\vec{v}^{+}\right)\\
    &
    -\alpha\frac{1}{2Z}
    {\left(\vec{p}^{-}-\vec{p}^{+}\right)}^{T}
    \mat{\Omega}_{bc} \mat{S}_{J}
    \left(\vec{p}^{-}-\vec{p}^{+}\right),
  \end{align}
  is non-positive for $\alpha \ge 0$.
\end{lemma}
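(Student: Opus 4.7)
The plan is to mirror almost exactly the argument used for conforming SBP interfaces (the proof following equations \eqref{eqn:pen:con:v}--\eqref{eqn:pen:con:p}), but now applied to the DG edge dissipation identity from Lemma~\ref{lemma:disp:single:dg}. Since the DG-to-DG interface is a matched quadrature interface (both elements see the same cubature points along the shared edge, with unit normals that are equal in magnitude and opposite in sign), the argument is purely algebraic once the edge dissipation expressions on the two sides have been written down.

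First, I would apply \eqref{eqn:disp:edge:dg} to the minus side of the interface, writing
\begin{align*}
  \mathcal{D}^{-} =
  - {\left(\vec{v}^{-}\right)}^{T}\mat{\Omega}_{bc}\mat{S}_{J}\vec{p}^{*}
  - {\left(\vec{p}^{-}\right)}^{T}\mat{\Omega}_{bc}\mat{S}_{J}
    \left(\vec{v}^{*} - \vec{v}^{-}\right),
\end{align*}
and then substitute the numerical fluxes \eqref{eqn:dg:con:v}--\eqref{eqn:dg:con:p}. Next, I would repeat the same calculation on the plus side: the only bookkeeping subtlety is that the normal velocity $\vec{v}^{+}$ is defined with respect to the \emph{plus} outward normal, so that by definition $\vec{v}^{+}$ and $\vec{v}^{-}$ enter the respective local flux formulas with the same sign convention (swap $+$ and $-$ in \eqref{eqn:dg:con:v}--\eqref{eqn:dg:con:p} to obtain $\vec{p}^{*}$ and $\vec{v}^{*}-\vec{v}^{+}$ as seen from the plus side). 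This yields an expression for $\mathcal{D}^{+}$ of identical structure to $\mathcal{D}^{-}$ with the roles of $+$ and $-$ exchanged.

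I would then add the two contributions. The central (non-dissipative) pieces proportional to $\tfrac{1}{2}(\vec{p}^{+}\pm\vec{p}^{-})$ and $\tfrac{1}{2}(\vec{v}^{+}\pm\vec{v}^{-})$ should cancel in pairs, exactly as in the conforming SBP case (equations \eqref{eqn:disp:con:min}--\eqref{eqn:disp:con:plus}), leaving only the upwind dissipation terms. Collecting the quadratic forms in $(\vec{v}^{-}+\vec{v}^{+})$ and $(\vec{p}^{-}-\vec{p}^{+})$ produces \eqref{eqn:disp:edge:dg:interior}. Non-positivity then follows from $\alpha \geq 0$, $Z>0$, and the fact that $\mat{\Omega}_{bc}$ and $\mat{S}_{J}$ are diagonal and positive definite (so their product is a diagonal positive definite matrix, making each quadratic form non-negative).

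The only place that could trip one up is the sign tracking for the normal component of velocity on the two sides of the interface; in particular one must be careful that the expression $\vec{v}^{-}+\vec{v}^{+}$ in \eqref{eqn:disp:edge:dg:interior} reflects continuity of the physical velocity (opposite outward normals) rather than a typo. Once that convention is fixed, the cancellations and the final manipulations are essentially identical to those carried out in the conforming SBP proof, so no new ideas are needed beyond Lemma~\ref{lemma:disp:single:dg}.
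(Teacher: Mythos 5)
Your proposal is correct and follows essentially the same route as the paper: substitute the numerical fluxes \eref{eqn:dg:con:v}--\eref{eqn:dg:con:p} into the edge dissipation identity of Lemma~\ref{lemma:disp:single:dg} on each side, add, and use the symmetry of $\mat{\Omega}_{bc}\mat{S}_{J}$ to cancel the central terms, with non-positivity following from the positive definiteness of the diagonal weight matrices. The paper's proof is just a terser statement of the same calculation, so no further comment is needed.
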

\begin{proof}
  Equation~\eqref{eqn:disp:edge:dg:interior} follows directly by adding
  $\mathcal{D}^{+} + \mathcal{D}^{+}$ and using the definition of the numerical
  flux \eref{eqn:dg:con:v}--\eref{eqn:dg:noncon:p}.
  The non-positiveness of \eqref{eqn:disp:edge:dg:interior}
  follows from the fact that $\mat{\Omega}_{bc}$ and $\mat{S}_{JK}$ are
  diagonal, positive definite matrices.
\end{proof}

\subsubsection{SBP-to-DG interface}
We now consider the case of an edge corresponding to an interface with an SBP
block. Note that in general the edge of an SBP block will be connected to many
DG elements as shown in \fref{fig:illustration} and thus a similar
procedure will be required as was used in \sref{sec:noncon:many} for connecting
many SBP blocks across one interface. For simplicity, we assume that the
DG element only connects to a single SBP block and that the
coupling occurs along the east face of the SBP block (as shown in the right
panel of \fref{fig:illustration}). We index the glue grid using the
SBP coordinate transform, so $\eta = \xi_{2}$ in \fref{fig:illustration} where
$\xi_{2}$ is the second metric coordinate of the SBP block. Let the
DG element intersect the glue grid over the interval
$[\eta_{1},\eta_{2}]$. Since the surface Jacobian for the DG
element is defined for the element's reference space on the boundary of length
$\gamma$, we have to scale the surface Jacobian before projecting to the glue space
as was done in the many-to-one SBP case of \sref{sec:noncon:many}. Thus we
define the scaled and projected DG solution as
\begin{align}
  \vec{\bar{v}}^{-} &= \mat{P}_{f2g}^{-} {\left(\frac{\mat{S}_{J}^{-}}{\Delta^{-}}\right)}^{1/2} \vec{v}^{-},\\
  \vec{\bar{p}}^{-} &= \mat{P}_{f2g}^{-} {\left(\frac{\mat{S}_{J}^{-}}{\Delta^{-}}\right)}^{1/2} \vec{p}^{-},
\end{align}
where $\Delta^{-} = (\eta_2-\eta_1)/\gamma$ and $\mat{P}_{f2g}^{-}$ is the
projection from the DG element edge cubature points to the portion of the glue
grid it overlaps with. Similarly we define the projection back from the glue to
the DG element edge as $\mat{P}_{g2f}^{-}$.

If we use polynomial basis functions of order $q$ for DG and set the glue grid
space to a higher order polynomial space, then we have
that $\mat{P}_{g2f}^{-} \mat{P}_{f2g}^{-} = \mat{I}$, that is there is no
projection error as there was for the SBP solution; note that the converse is
not true as the glue grid is a higher order space. With this assumption, we can
now define the DG numerical flux when connected to the SBP
finite difference solution as
\begin{align}
  \label{eqn:dg:noncon:v}
  \vec{p}^{*} = &\;
  {\left(
      \frac{\mat{S}_{J}^{-}}{\Delta^{-}}
  \right)}^{-1/2}\mat{P}_{g2f}^{-}\bar{\vec{p}}^{*},\\
  \label{eqn:dg:noncon:p}
  \vec{v}^{*} - \vec{v}^{-} =\;&
  {\left(
      \frac{\mat{S}_{J}^{-}}{\Delta^{-}}
  \right)}^{-1/2}
\mat{P}_{g2f}^{-}
  \left(
    \vec{\bar{v}}^{*} - \vec{\bar{v}}^{-}
  \right),
\end{align}
with $\vec{\bar{p}}^{*}$ and $\vec{\bar{v}}^{*} - \vec{\bar{v}}^{-}$ defined as
in \eref{eqn:pen:con:v}--\eref{eqn:pen:con:p} using the values
$\vec{\bar{p}}^{\pm}$ and $\vec{\bar{v}}^{\pm}$ for $\vec{p}^{\pm}$ and
$\vec{v}^{\pm}$, respectively. As in the conforming case,
comparing these numerical flux expressions with the SBP penalty terms
\eref{eqn:pen:noncon:v}--\eref{eqn:pen:noncon:p} we see that they are identical
since there is no projection error going to the glue and back for the
DG solution, in particular since
\begin{align}
  \mat{P}_{g2f}^{-} \mat{P}_{f2g}^{-} \vec{p}^{-} =
  \mat{P}_{g2f}^{-} \vec{\bar{p}}^{-} = \vec{p}^{-}.
\end{align}
Additionally, these numerical fluxes are the same as those in
\eref{eqn:dg:con:v}--\eref{eqn:dg:con:p} since in the case of connecting two DG
elements the projection operators are identity operations, i.e.,
$\mat{P}_{f2g}^{-} = \mat{P}_{g2f}^{-} = \mat{I}$, and the surface Jacobians
are the same for both sides of the interface. With a high enough boundary
cubature order, it can be assumed that
\begin{align}
  \label{eqn:dg:proj}
  \Delta^{-}{\left(\mat{P}^{-}_{g2f}\right)}^{T}\mat{\Omega}_{bc}^{-} = \mat{M}^{-(k)}\mat{P}_{f2g}^{-},
\end{align}
and stability of the SBP-DG coupling is characterized by the following corollary
to Theorem~\ref{thm:many:stability}.

\begin{corollary}
  Consider a single, nonconforming interface between an SBP finite difference
  method and a DG method. If the SBP finite difference
  method has interface penalty terms of the form
  \eref{eqn:pen:noncon:v:scaled}--\eref{eqn:pen:noncon:p:scaled} and the
  DG method has numerical fluxes of the form
  \eref{eqn:dg:noncon:v}--\eref{eqn:dg:noncon:p}, then the coupling interface
  satisfies the dissipation rates of Theorem~\ref{thm:many:stability}.
\end{corollary}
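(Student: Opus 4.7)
The plan is to mimic the proof of Theorem~\ref{thm:many:stability}: reduce the DG-side edge dissipation from Lemma~\ref{lemma:disp:single:dg} to the same glue-grid bilinear form \eref{eqn:many:sing:disp} that was derived for the SBP side, so that the concluding algebra of Theorem~\ref{thm:many:stability} can be reused verbatim. Concretely, it suffices to show that for each DG element sharing an edge with the SBP block, the edge dissipation rate can be written
\begin{align*}
  \mathcal{D}^{-(k)} =
    -\bigl(\vec{\bar{v}}^{-(k)}\bigr)^{T}\mat{M}^{-(k)}\vec{\bar{p}}^{*(k)}
    +\bigl(\vec{\bar{v}}^{-(k)}\bigr)^{T}\mat{M}^{-(k)}\vec{\bar{p}}^{-(k)}
    -\bigl(\vec{\bar{p}}^{-(k)}\bigr)^{T}\mat{M}^{-(k)}\vec{\bar{v}}^{*(k)},
\end{align*}
since once both sides of the interface are cast in this common glue-grid form, the central-plus-upwind choice of $\vec{\bar{p}}^{*}$, $\vec{\bar{v}}^{*}$ and the symmetry/positive-definiteness of $\mat{M}$ yield the stated dissipation rate as in the end of the proof of Theorem~\ref{thm:many:stability}.

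First I would rewrite \eref{eqn:disp:edge:dg} using $-(\vec{p}^{-})^{T}\mat{\Omega}_{bc}\mat{S}_{J}(\vec{v}^{*}-\vec{v}^{-}) = -(\vec{p}^{-})^{T}\mat{\Omega}_{bc}\mat{S}_{J}\vec{v}^{*} + (\vec{v}^{-})^{T}\mat{\Omega}_{bc}\mat{S}_{J}\vec{p}^{-}$ (using diagonality of $\mat{\Omega}_{bc}$ and $\mat{S}_{J}$) to obtain exactly the same three-term structure as the SBP edge rate \eref{eqn:disp:single:edge}, with $\mat{H}_{K}$ replaced by $\mat{\Omega}_{bc}$. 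Next I would substitute the DG numerical flux \eref{eqn:dg:noncon:v}--\eref{eqn:dg:noncon:p}. A key simplification arises: because the glue polynomial space contains the DG trace space, $\mat{P}_{g2f}^{-}\mat{P}_{f2g}^{-}=\mat{I}$, so the DG flux carries no projection-error term and one has simply $\vec{p}^{*} = (\mat{S}_{J}^{-}/\Delta^{-})^{-1/2}\mat{P}_{g2f}^{-}\vec{\bar{p}}^{*}$ and $\vec{v}^{*}-\vec{v}^{-} = (\mat{S}_{J}^{-}/\Delta^{-})^{-1/2}\mat{P}_{g2f}^{-}(\vec{\bar{v}}^{*}-\vec{\bar{v}}^{-})$.

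Next I would push the projections across the inner products using the DG compatibility identity \eref{eqn:dg:proj}, $\Delta^{-}(\mat{P}_{g2f}^{-})^{T}\mat{\Omega}_{bc} = \mat{M}^{-(k)}\mat{P}_{f2g}^{-}$. The diagonal scaling combines as $\mat{S}_{J}^{-}(\mat{S}_{J}^{-}/\Delta^{-})^{-1/2} = (\mat{S}_{J}^{-}/\Delta^{-})^{1/2}\Delta^{-}$, and the surplus $\Delta^{-}$ is exactly what \eref{eqn:dg:proj} consumes. The remaining $(\mat{S}_{J}^{-}/\Delta^{-})^{1/2}$ factor on each side gets absorbed into the definitions $\vec{\bar{p}}^{-} = \mat{P}_{f2g}^{-}(\mat{S}_{J}^{-}/\Delta^{-})^{1/2}\vec{p}^{-}$ and $\vec{\bar{v}}^{-} = \mat{P}_{f2g}^{-}(\mat{S}_{J}^{-}/\Delta^{-})^{1/2}\vec{v}^{-}$, leaving precisely the three glue-grid inner products displayed above. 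The same manipulation applied to the residual term $(\vec{v}^{-})^{T}\mat{\Omega}_{bc}\mat{S}_{J}\vec{p}^{-}$ (without any projection) still yields $(\vec{\bar{v}}^{-})^{T}\mat{M}^{-(k)}\vec{\bar{p}}^{-}$, because $\mat{P}_{g2f}^{-}\mat{P}_{f2g}^{-} = \mat{I}$ lets us insert a projection pair and then invoke compatibility.

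With the DG edge rate in glue form, summing over the DG elements along the interface stacks them into the same block-diagonal $\mat{M}$ and concatenated $\vec{\bar{p}}^{-},\vec{\bar{v}}^{-}$ used on the SBP side of Theorem~\ref{thm:many:stability}, and the final step --- inserting \eref{eqn:pen:con:v}--\eref{eqn:pen:con:p} for $\vec{\bar{p}}^{*}$, $\vec{\bar{v}}^{*}$, adding $\mathcal{D}^{+}$, and simplifying --- is identical to what appears there. The main obstacle will be the careful bookkeeping of the three Jacobian-like weightings ($\mat{S}_{J}^{-}$, $\Delta^{-}$, and $\mat{\Omega}_{bc}$) as they are pushed through the projection via \eref{eqn:dg:proj}; once one verifies that the surplus $\Delta^{-}$ in \eref{eqn:dg:proj} is exactly the factor produced by the inverse scaling in \eref{eqn:dg:noncon:v}--\eref{eqn:dg:noncon:p}, the rest of the corollary is a direct transcription of the proof of Theorem~\ref{thm:many:stability}.
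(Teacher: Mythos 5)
Your proposal is correct and follows essentially the same route as the paper: substitute the DG flux \eref{eqn:dg:noncon:v}--\eref{eqn:dg:noncon:p} into the DG edge dissipation rate \eref{eqn:disp:edge:dg}, combine the $\mat{S}_{J}^{-}(\mat{S}_{J}^{-}/\Delta^{-})^{-1/2}$ scalings so that \eref{eqn:dg:proj} converts $\Delta^{-}\mat{\Omega}_{bc}\mat{P}_{g2f}^{-}$ into $(\mat{P}_{f2g}^{-})^{T}\mat{M}^{-(k)}$, and thereby reduce each DG element's edge rate to the glue-grid form \eref{eqn:many:sing:disp}, after which Theorem~\ref{thm:many:stability} applies verbatim. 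The paper keeps $(\vec{\bar{v}}^{*}-\vec{\bar{v}}^{-})$ together rather than expanding the residual term separately, but that is a cosmetic difference.
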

\begin{proof}
  To prove that this corollary is true we will show that a single DG cell,
  indexed by ${}^{(k)}$, satisfies \eref{eqn:many:sing:disp}. The single edge
  dissipation rate for a DG cell comes from substituting
  \eref{eqn:dg:noncon:v}--\eref{eqn:dg:noncon:p} into \eref{eqn:disp:edge:dg}
  and using
  \begin{align}
    \notag
    \mathcal{D}^{-(k)} = &
    - {\left(\vec{v}^{-(k)}\right)}^{T}\mat{\Omega}_{bc}{\left({\Delta^{-(k)}}{\mat{S}_{J}^{-(k)}}\right)}^{1/2}\mat{P}_{g2f}^{-(k)}\vec{\bar{p}}^{*(k)}\\&
    - {\left(\vec{p}^{-(k)}\right)}^{T}\mat{\Omega}_{bc}{\left({\Delta^{-(k)}}{\mat{S}_{J}^{-(k)}}\right)}^{1/2}\mat{P}_{g2f}^{-(k)}\left(\vec{\bar{v}}^{*(k)} - \vec{\bar{v}}^{-(k)}\right).
  \end{align}
  Using \eref{eqn:dg:proj} this can be simplified to \eref{eqn:many:sing:disp}.  Thus,
  the rest of the proof for the dissipation rates follow the same procedure as in
  the proof of Theorem~\ref{thm:many:stability}.
\end{proof}

\section{Numerical Results}\label{sec:results}
Here we confirm the above theoretical stability results as well as explore the
accuracy of the coupling technique.\footnote{MATLAB code for constructing the
interpolation operators used in this section are available in the electronic
supplement and at \url{https://github.com/bfam/sbp_projection_operators}. The
simulation code used to produce the results is available at
\url{https://github.com/bfam/sbp_projection_2d}. For DG, when coupling with
SBP-SAT, we use the code from Hesthaven and
Warburton~\cite{HesthavenWarburton2008} available at
\url{https://github.com/tcew/nodal-dg}.}  A method-of-lines approach
is used to discretize the acoustic wave equation where the spatial
schemes is as described in this paper and an explicit 4th order
Runge--Kutta method is used for the temporal discretization.
The test problem is the discretization of
\eref{eqn:acoustic:transformed:v}--\eref{eqn:acoustic:transformed:p} on the
domain $\Omega = [-1,1]\times[-1,1]$ with $\rho = \lambda = 1$. Zero pressure,
i.e., free surface, boundary conditions are used on all boundaries. We use the
initial condition
\begin{align}
  p(x_{1},x_{2},0) &=
  \cos\left(k_{1} x_{1}\right)\cos\left(k_{1} x_{2}\right)
  +
  \sin\left(k_{2} x_{1}\right)\sin\left(k_{2} x_{2}\right),\\
  v_{i}(x_{1},x_{2},0) &= 0,~ \quad i=1,2,
\end{align}
where $k_{1} = \pi/2$ and $k_{2} = \pi$. With this the exact solution is
\begin{align}
  p(x_{1},x_{2},t) &=
  \cos\left(\omega_{1} t\right)
  \cos\left(k_{1} x_{1}\right)\cos\left(k_{1} x_{2}\right)
  +
  \cos\left(\omega_{2} t\right)
  \sin\left(k_{2} x_{1}\right)\sin\left(k_{2} x_{2}\right),\\
  v_{1}(x_{1},x_{2},t) &=
  \frac{k_{1}}{\omega_{1}} \sin\left(\omega_{1} t\right)
  \sin\left(k_{1} x_{1}\right)\cos\left(k_{1} x_{2}\right)
  -
  \frac{k_{2}}{\omega_{2}} \sin\left(\omega_{2} t\right)
  \cos\left(k_{2} x_{1}\right)\sin\left(k_{2} x_{2}\right),\\
  v_{2}(x_{1},x_{2},t) &=
  \frac{k_{1}}{\omega_{1}} \sin\left(\omega_{1} t\right)
  \cos\left(k_{1} x_{1}\right)\sin\left(k_{1} x_{2}\right)
  -
  \frac{k_{2}}{\omega_{2}} \sin\left(\omega_{2} t\right)
  \sin\left(k_{2} x_{1}\right)\cos\left(k_{2} x_{2}\right),
\end{align}
where $\omega_{j} = k_{j}\sqrt{2}$ for $j=1,2$.

\subsection{One-to-One SBP Coupling}
We first test the coupling of two SBP blocks as shown in the left panel of
\fref{fig:illustration}. The coordinate transforms for the two blocks are
\begin{align}
  \label{eqn:left:coord}
  x_{1}^{(L)}(\xi_{1},\xi_{2}) &=
  \left(\frac{1+\xi_{1}}{10}\right)
  \sin\left(\pi\left(\xi_{2}+1\right)\right)
  - \left(\frac{1-\xi_{1}}{2}\right), &
  x_{2}^{(L)}(\xi_{1},\xi_{2}) &= \xi_{2},\\
  \label{eqn:right:coord}
  x_{1}^{(R)}(\xi_{1},\xi_{2}) &=
  \left(\frac{1-\xi_{1}}{10}\right)
  \sin\left(\pi\left(\xi_{2}+1\right)\right)
  + \left(\frac{1+\xi_{1}}{2}\right), &
  x_{2}^{(R)}(\xi_{1},\xi_{2}) &= \xi_{2},
\end{align}
where the superscript $(L)$ and $(R)$ corresponds to the left and right block,
respectively. Notice that the coordinate transform along the interface is
conforming $x_{1}^{(L)}(1,\xi_{2}) = x_{1}^{(R)}(-1,\xi_{2}) =
\frac{1}{5}\sin\left(\pi\left(\xi_{2}+1\right)\right)$ and
$x_{2}^{(L)}(1,\xi_{2}) = x_{2}^{(R)}(-1,\xi_{2}) = \xi_{2}$.

We discretize the left block with using an $(N/2+1)\times (N+1)$ grid of where
$N = 2^k$ with $k = 6,7,8,9,10$. For the right block we use an
$(M/2+1)\times(M+1)$ grid of points where $M$ is chosen so the interface if
conforming ($M = N$), nested ($M = 2N$), or unnested ($M = 2N+1$). In the
conforming case no projection operator is used, i.e., this is the traditional
SBP-SAT coupling. We run the simulation using SBP orders $q = 2,3,4,5$, where
here $q$ refers to the boundary order, i.e., the interior finite
difference method is of order $p_{i} = 2q$, the boundary finite difference order
is $p_{b} = q$, and the expected rate of convergence for conforming multiblock
SBP is $q+1$. Note that in this work we will exclusively consider the diagonal
norm SBP operators. The final time of the simulations is $t = 1$. For the
penalties we use $\alpha = 1$, thus the interface and boundaries are fully
upwinded.

\begin{table}
  \centering
  \begin{tabular}{rllll}
    \toprule
    & \multicolumn{1}{c}{$q = 2$}
    & \multicolumn{1}{c}{$q = 3$}
    & \multicolumn{1}{c}{$q = 4$}
    & \multicolumn{1}{c}{$q = 5$}\\
    {$N$}
    & \multicolumn{1}{c}{error (rate)}
    & \multicolumn{1}{c}{error (rate)}
    & \multicolumn{1}{c}{error (rate)}
    & \multicolumn{1}{c}{error (rate)}\\
    \midrule
    &
    \multicolumn{4}{c}{conforming meshes (no projection)}\\
    $  64$ & \color{red} $4.3 \times 10^{ -4}$ $\phantom{(0.0)}$ & \color{blu} $2.1 \times 10^{ -4}$ $\phantom{(0.0)}$ & \color{grn} $4.6 \times 10^{ -5}$ $\phantom{(0.0)}$ & \color{org} $3.7 \times 10^{ -5}$ $\phantom{(0.0)}$\\
    $ 128$ & \color{red} $5.2 \times 10^{ -5}$ $         (3.0) $ & \color{blu} $1.4 \times 10^{ -5}$ $         (3.9) $ & \color{grn} $1.4 \times 10^{ -6}$ $         (5.0) $ & \color{org} $5.7 \times 10^{ -7}$ $         (6.0) $\\
    $ 256$ & \color{red} $6.5 \times 10^{ -6}$ $         (3.0) $ & \color{blu} $9.3 \times 10^{ -7}$ $         (4.0) $ & \color{grn} $4.8 \times 10^{ -8}$ $         (4.9) $ & \color{org} $7.7 \times 10^{ -9}$ $         (6.2) $\\
    $ 512$ & \color{red} $8.0 \times 10^{ -7}$ $         (3.0) $ & \color{blu} $5.9 \times 10^{ -8}$ $         (4.0) $ & \color{grn} $1.9 \times 10^{ -9}$ $         (4.6) $ & \color{org} $1.1 \times 10^{-10}$ $         (6.1) $\\
    $1024$ & \color{red} $1.0 \times 10^{ -7}$ $         (3.0) $ & \color{blu} $3.7 \times 10^{ -9}$ $         (4.0) $ & \color{grn} $7.6 \times 10^{-11}$ $         (4.6) $ & \color{org} $1.7 \times 10^{-12}$ $         (6.0) $\\
    \midrule
    &
    \multicolumn{4}{c}{nested meshes}\\
    $  64$ & \color{red} $5.5 \times 10^{ -4}$ $\phantom{(0.0)}$ & \color{blu} $1.4 \times 10^{ -4}$ $\phantom{(0.0)}$ & \color{grn} $8.3 \times 10^{ -5}$ $\phantom{(0.0)}$ & \color{org} $6.5 \times 10^{ -5}$ $\phantom{(0.0)}$\\
    $ 128$ & \color{red} $7.8 \times 10^{ -5}$ $         (2.8) $ & \color{blu} $7.9 \times 10^{ -6}$ $         (4.2) $ & \color{grn} $6.6 \times 10^{ -6}$ $         (3.7) $ & \color{org} $1.3 \times 10^{ -6}$ $         (5.7) $\\
    $ 256$ & \color{red} $1.1 \times 10^{ -5}$ $         (2.8) $ & \color{blu} $4.8 \times 10^{ -7}$ $         (4.0) $ & \color{grn} $3.1 \times 10^{ -7}$ $         (4.4) $ & \color{org} $1.5 \times 10^{ -8}$ $         (6.4) $\\
    $ 512$ & \color{red} $1.6 \times 10^{ -6}$ $         (2.8) $ & \color{blu} $3.2 \times 10^{ -8}$ $         (3.9) $ & \color{grn} $1.3 \times 10^{ -8}$ $         (4.5) $ & \color{org} $1.2 \times 10^{-10}$ $         (7.0) $\\
    $1024$ & \color{red} $2.2 \times 10^{ -7}$ $         (2.8) $ & \color{blu} $2.3 \times 10^{ -9}$ $         (3.8) $ & \color{grn} $6.4 \times 10^{-10}$ $         (4.4) $ & \color{org} $1.5 \times 10^{-12}$ $         (6.3) $\\
    \midrule
    &
    \multicolumn{4}{c}{unnested meshes}\\
    $  64$ & \color{red} $5.0 \times 10^{ -4}$ $\phantom{(0.0)}$ & \color{blu} $1.4 \times 10^{ -4}$ $\phantom{(0.0)}$ & \color{grn} $8.9 \times 10^{ -5}$ $\phantom{(0.0)}$ & \color{org} $6.2 \times 10^{ -5}$ $\phantom{(0.0)}$\\
    $ 128$ & \color{red} $7.1 \times 10^{ -5}$ $         (2.8) $ & \color{blu} $8.0 \times 10^{ -6}$ $         (4.2) $ & \color{grn} $6.7 \times 10^{ -6}$ $         (3.7) $ & \color{org} $1.2 \times 10^{ -6}$ $         (5.6) $\\
    $ 256$ & \color{red} $1.0 \times 10^{ -5}$ $         (2.8) $ & \color{blu} $4.8 \times 10^{ -7}$ $         (4.1) $ & \color{grn} $3.1 \times 10^{ -7}$ $         (4.4) $ & \color{org} $1.4 \times 10^{ -8}$ $         (6.4) $\\
    $ 512$ & \color{red} $1.5 \times 10^{ -6}$ $         (2.8) $ & \color{blu} $3.2 \times 10^{ -8}$ $         (3.9) $ & \color{grn} $1.3 \times 10^{ -8}$ $         (4.5) $ & \color{org} $1.1 \times 10^{-10}$ $         (7.0) $\\
    $1024$ & \color{red} $2.1 \times 10^{ -7}$ $         (2.8) $ & \color{blu} $2.3 \times 10^{ -9}$ $         (3.8) $ & \color{grn} $6.3 \times 10^{-10}$ $         (4.4) $ & \color{org} $1.5 \times 10^{-12}$ $         (6.3) $\\
    \bottomrule
  \end{tabular}
  \caption{Table of the error and estimated convergence rates for the coupling
  of two SBP blocks (see simulations shown in \fref{fig:sbp:2block:error}).
\label{tab:sbp:2block}}
\end{table}
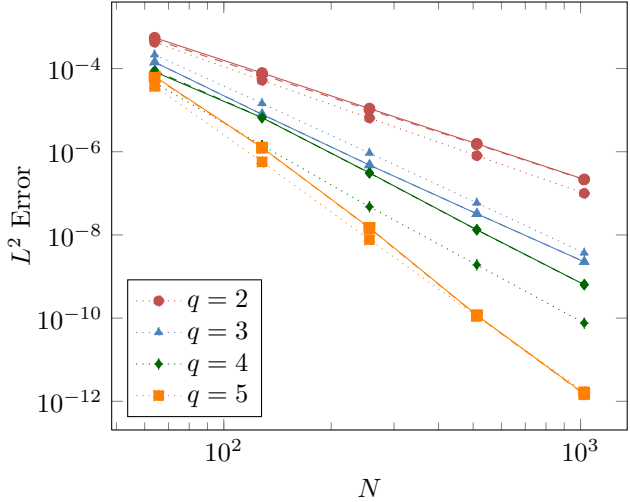
\begin{figure}[tb]
  \centering
  \begin{tikzpicture}
    \begin{loglogaxis}[
        xlabel=\textsc{$N$},
        ylabel=$L^2$ Error,
        legend pos=south west
      ]

      \addplot[color=red,mark=*,dotted] plot coordinates {%
        (64,4.282861e-04)
        (128,5.224703e-05)
        (256,6.458243e-06)
        (512,8.030464e-07)
        (1024,1.001236e-07)
      };
      \addplot[color=blu,mark=triangle*,dotted] plot coordinates {%
        (64,2.143600e-04)
        (128,1.448582e-05)
        (256,9.296184e-07)
        (512,5.868803e-08)
        (1024,3.683632e-09)
      };
      \addplot[color=grn,mark=diamond*,dotted] plot coordinates {%
        (64,4.590937e-05)
        (128,1.421926e-06)
        (256,4.757858e-08)
        (512,1.906394e-09)
        (1024,7.597383e-11)
      };
      \addplot[color=org,mark=square*,dotted] plot coordinates {%
        (64,3.738235e-05)
        (128,5.700696e-07)
        (256,7.710545e-09)
        (512,1.107496e-10)
        (1024,1.693577e-12)
      };

      \addplot[color=red,mark=*] plot coordinates {%
        (64,5.513105e-04)
        (128,7.819699e-05)
        (256,1.093285e-05)
        (512,1.552629e-06)
        (1024,2.163182e-07)
      };
      \addplot[color=blu,mark=triangle*] plot coordinates {%
        (64,1.424577e-04)
        (128,7.913799e-06)
        (256,4.781149e-07)
        (512,3.227392e-08)
        (1024,2.259152e-09)
      };
      \addplot[color=grn,mark=diamond*] plot coordinates {%
        (64,8.354699e-05)
        (128,6.585552e-06)
        (256,3.103095e-07)
        (512,1.330158e-08)
        (1024,6.368893e-10)
      };
      \addplot[color=org,mark=square*] plot coordinates {%
        (64,6.451469e-05)
        (128,1.257878e-06)
        (256,1.488397e-08)
        (512,1.176451e-10)
        (1024,1.514443e-12)
      };

      \addplot[color=red,mark=*,dashed] plot coordinates {%
        (64,4.974287e-04)
        (128,7.083881e-05)
        (256,1.019324e-05)
        (512,1.492158e-06)
        (1024,2.118739e-07)
      };
      \addplot[color=blu,mark=triangle*,dashed] plot coordinates {%
        (64,1.424303e-04)
        (128,7.970779e-06)
        (256,4.793147e-07)
        (512,3.225874e-08)
        (1024,2.256222e-09)
      };
      \addplot[color=grn,mark=diamond*,dashed] plot coordinates {%
        (64,8.945995e-05)
        (128,6.650075e-06)
        (256,3.064335e-07)
        (512,1.318206e-08)
        (1024,6.337922e-10)
      };
      \addplot[color=org,mark=square*,dashed] plot coordinates {%
        (64,6.174023e-05)
        (128,1.243921e-06)
        (256,1.449756e-08)
        (512,1.145746e-10)
        (1024,1.472355e-12)
      };

      \legend{$q=2$\\$q=3$\\$q=4$\\$q=5$\\};
    \end{loglogaxis}
  \end{tikzpicture}
  \caption{Log-log plot of $N$ versus the $L^{2}$ error (measured with the
  energy norm) for the coupling of two SBP blocks (see \fref{fig:illustration}).
  Three different interface types are shown: a conforming interface (dotted
  line), a nested interface with a two-to-one refinement ratio (solid line),
  and an unnested interface with $N_{\text{fine}} = 2N_{\text{coarse}}
  + 1$ (dashed line). In the figure axis $N$ refers to the coarse
  block, i.e., $N_{\text{coarse}} = N$.}\label{fig:sbp:2block:error}
\end{figure}

In \fref{fig:sbp:2block:error} and \tref{tab:sbp:2block} we report the error for
each of the three cases. We measure the error using the $L^{2}$ norm
\begin{align}
  \label{eqn:error:measure}
  \varepsilon^2 =
  \sum_{\{\Omega_e\}}
  \left(
    \frac{\rho}{2}\vec{\Delta v}_{1}^{T}\mat{J}\mat{\bar{H}}\vec{\Delta v}_{1}
    + \frac{\rho}{2}\vec{\Delta v}_{2}^{T}\mat{J}\mat{\bar{H}}\vec{\Delta v}_{2}
    + \frac{1}{2\lambda}\vec{\Delta p}^{T}\mat{J}\mat{\bar{H}}\vec{\Delta p}
  \right),
\end{align}
where $\vec{\Delta v}_{1}$, $\vec{\Delta v}_{2}$, and $\vec{\Delta p}$ are the
difference between the discrete solution and exact solution at all the grid
points; note that this is the same norm used in the stability analysis
\eref{eqn:energy:single}. In  all
the cases the error is decreasing with mesh refinement. For both of the cases
where the projection operators are used, the overall error level is comparable
and slightly higher than the conforming (no projection) case. Also given in
\tref{tab:sbp:2block} are estimates of the convergence rate between two
successive resolutions measured using
\begin{align}
  \label{eqn:error:rate}
  \frac
  {
    \log\left(\varepsilon_{f}\right) - \log\left(\varepsilon_{c}\right)
  }{
    \log\left(N_{c}\right) - \log\left(N_{f}\right)
  },
\end{align}
where here subscript $f$ refers to the finer solution and $c$ the coarser
solution. As the table shows, the coupling does show higher-order convergence
though it is interesting to note that when the projection are used the rates are
more sporadic than the conforming case.

\subsection{Two-to-One SBP Coupling}
We now test the use of the projection operators to couple multiple SBP blocks
along
a single interface. That is we have a single block on the left side of
the interface and two blocks on the right side of the interface as in the center
panel of \fref{fig:illustration}. The block on the left side of the interface
has the coordinate transform \eref{eqn:left:coord}, where as the top and bottom
right blocks have the coordinates transforms:
\begin{align}
  x_{1}^{(RT)}(\xi_{1},\xi_{2}) &=
  x_{1}^{(R)}\left(\frac{\xi_{1}+1}{2},\xi_{2}\right),&
  x_{2}^{(RT)}(\xi_{1},\xi_{2}) &=
  x_{2}^{(R)}\left(\frac{\xi_{1}+1}{2},\xi_{2}\right),\\
  x_{1}^{(RB)}(\xi_{1},\xi_{2}) &=
  x_{1}^{(R)}\left(\frac{\xi_{1}-1}{2},\xi_{2}\right),&
  x_{2}^{(RB)}(\xi_{1},\xi_{2}) &=
  x_{2}^{(R)}\left(\frac{\xi_{1}-1}{2},\xi_{2}\right),
\end{align}
where superscript $(RT)$ and $(RB)$ refer to the right-top and right-bottom
blocks, respectively, and $x_{1,2}^{(R)}$ are defined by \eref{eqn:right:coord}.

We discretize the left block with an $(N/2+1) \times (N+1)$ grid of points, with
$N = 2^k$ for $k=6,7,8,9,10$. The right two blocks are discretized using grids of
size $(N+1) \times (M+1)$ where is chosen so that the interface is nested with a
$1:2$ refinement ratio ($M=N$) or fully unnested ($M = N+1$); in both cases the
interface between the two right blocks is conforming. As before we let
the final time be $t = 1$ and use $\alpha = 1$ in the penalty terms.

\begin{figure}[tb]
  \centering
  \begin{tikzpicture}
    \begin{loglogaxis}[
      xlabel=\textsc{$N$},
      ylabel=$L^2$ Error,
      legend pos=south west
      ]

      \addplot[color=red,mark=*] plot coordinates {%
        (64,5.507031e-04)
        (128,7.817307e-05)
        (256,1.103057e-05)
        (512,1.589351e-06)
        (1024,2.236401e-07)
      };
      \addplot[color=blu,mark=triangle*] plot coordinates {%
        (64,1.421373e-04)
        (128,7.884565e-06)
        (256,4.778790e-07)
        (512,3.230284e-08)
        (1024,2.261244e-09)
      };
      \addplot[color=grn,mark=diamond*] plot coordinates {%
        (64,8.361978e-05)
        (128,6.589251e-06)
        (256,3.107223e-07)
        (512,1.331353e-08)
        (1024,6.362838e-10)
      };
      \addplot[color=org,mark=square*] plot coordinates {%
        (64,6.313077e-05)
        (128,1.217341e-06)
        (256,1.443218e-08)
        (512,1.118591e-10)
        (1024,1.455544e-12)
      };

      \addplot[color=red,mark=*,dashed] plot coordinates {%
        (64,5.008572e-04)
        (128,7.141443e-05)
        (256,1.036944e-05)
        (512,1.538755e-06)
        (1024,2.202376e-07)
      };
      \addplot[color=blu,mark=triangle*,dashed] plot coordinates {%
        (64,1.423896e-04)
        (128,7.942775e-06)
        (256,4.787415e-07)
        (512,3.226398e-08)
        (1024,2.256944e-09)
      };
      \addplot[color=grn,mark=diamond*,dashed] plot coordinates {%
        (64,8.952729e-05)
        (128,6.652057e-06)
        (256,3.066845e-07)
        (512,1.319183e-08)
        (1024,6.329375e-10)
      };
      \addplot[color=org,mark=square*,dashed] plot coordinates {%
        (64,6.192083e-05)
        (128,1.223424e-06)
        (256,1.414144e-08)
        (512,1.093352e-10)
        (1024,1.415718e-12)
      };

      \legend{$q=2$\\$q=3$\\$q=4$\\$q=5$\\};
    \end{loglogaxis}
  \end{tikzpicture}
  \caption{Log-log plot of $N$ versus the $L^{2}$ error (measured with the
  energy norm) for the coupling of three SBP blocks (see \fref{fig:illustration}).
  Two different interface types are shown: a nested interface with a
  two-to-one refinement ratio (solid line) and an unnested interface with
  $N_{\text{fine}} = 2N_{\text{coarse}} + 1$ (dashed line). Also shown
  for reference is the two block conforming interface test (dotted
  line) from \fref{fig:sbp:2block:error}. In the figure axis $N$
  refers to the coarse block, i.e.,
  $N_{\text{coarse}} = N$.}\label{fig:sbp:3block:error}
\end{figure}
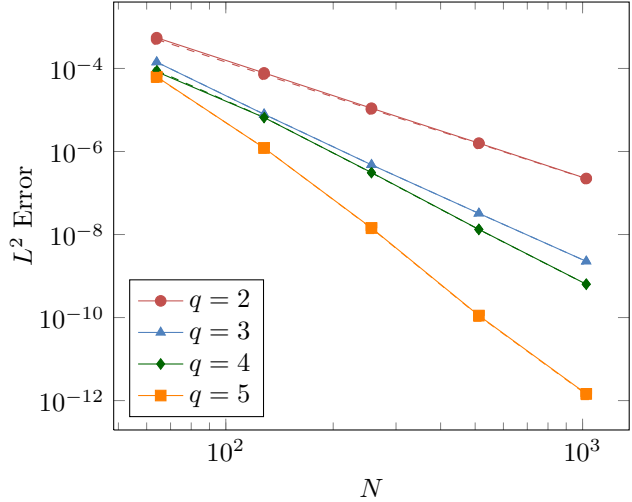
\begin{table}
  \centering
  \begin{tabular}{rllll}
    \toprule
    & \multicolumn{1}{c}{$q = 2$}
    & \multicolumn{1}{c}{$q = 3$}
    & \multicolumn{1}{c}{$q = 4$}
    & \multicolumn{1}{c}{$q = 5$}\\
    {$N$}
    & \multicolumn{1}{c}{error (rate)}
    & \multicolumn{1}{c}{error (rate)}
    & \multicolumn{1}{c}{error (rate)}
    & \multicolumn{1}{c}{error (rate)}\\
    \midrule
    &
    \multicolumn{4}{c}{nested meshes}\\
    $  64$ & \color{red} $5.5 \times 10^{ -4}$ $\phantom{(0.0)}$ & \color{blu} $1.4 \times 10^{ -4}$ $\phantom{(0.0)}$ & \color{grn} $8.4 \times 10^{ -5}$ $\phantom{(0.0)}$ & \color{org} $6.3 \times 10^{ -5}$ $\phantom{(0.0)}$\\
    $ 128$ & \color{red} $7.8 \times 10^{ -5}$ $         (2.8) $ & \color{blu} $7.9 \times 10^{ -6}$ $         (4.2) $ & \color{grn} $6.6 \times 10^{ -6}$ $         (3.7) $ & \color{org} $1.2 \times 10^{ -6}$ $         (5.7) $\\
    $ 256$ & \color{red} $1.1 \times 10^{ -5}$ $         (2.8) $ & \color{blu} $4.8 \times 10^{ -7}$ $         (4.0) $ & \color{grn} $3.1 \times 10^{ -7}$ $         (4.4) $ & \color{org} $1.4 \times 10^{ -8}$ $         (6.4) $\\
    $ 512$ & \color{red} $1.6 \times 10^{ -6}$ $         (2.8) $ & \color{blu} $3.2 \times 10^{ -8}$ $         (3.9) $ & \color{grn} $1.3 \times 10^{ -8}$ $         (4.5) $ & \color{org} $1.1 \times 10^{-10}$ $         (7.0) $\\
    $1024$ & \color{red} $2.2 \times 10^{ -7}$ $         (2.8) $ & \color{blu} $2.3 \times 10^{ -9}$ $         (3.8) $ & \color{grn} $6.4 \times 10^{-10}$ $         (4.4) $ & \color{org} $1.5 \times 10^{-12}$ $         (6.3) $\\
    \midrule
    &
    \multicolumn{4}{c}{unnested meshes}\\
    $  64$ & \color{red} $5.0 \times 10^{ -4}$ $\phantom{(0.0)}$ & \color{blu} $1.4 \times 10^{ -4}$ $\phantom{(0.0)}$ & \color{grn} $8.9 \times 10^{ -5}$ $\phantom{(0.0)}$ & \color{org} $6.2 \times 10^{ -5}$ $\phantom{(0.0)}$\\
    $ 128$ & \color{red} $7.1 \times 10^{ -5}$ $         (2.8) $ & \color{blu} $7.9 \times 10^{ -6}$ $         (4.2) $ & \color{grn} $6.7 \times 10^{ -6}$ $         (3.8) $ & \color{org} $1.2 \times 10^{ -6}$ $         (5.7) $\\
    $ 256$ & \color{red} $1.0 \times 10^{ -5}$ $         (2.8) $ & \color{blu} $4.8 \times 10^{ -7}$ $         (4.1) $ & \color{grn} $3.1 \times 10^{ -7}$ $         (4.4) $ & \color{org} $1.4 \times 10^{ -8}$ $         (6.4) $\\
    $ 512$ & \color{red} $1.5 \times 10^{ -6}$ $         (2.8) $ & \color{blu} $3.2 \times 10^{ -8}$ $         (3.9) $ & \color{grn} $1.3 \times 10^{ -8}$ $         (4.5) $ & \color{org} $1.1 \times 10^{-10}$ $         (7.0) $\\
    $1024$ & \color{red} $2.2 \times 10^{ -7}$ $         (2.8) $ & \color{blu} $2.3 \times 10^{ -9}$ $         (3.8) $ & \color{grn} $6.3 \times 10^{-10}$ $         (4.4) $ & \color{org} $1.4 \times 10^{-12}$ $         (6.3) $\\
    \bottomrule
  \end{tabular}
  \caption{Table of the error and estimated convergence rates for the coupling
  of three SBP blocks (see simulations shown in
  \fref{fig:sbp:3block:error}).}\label{tab:sbp:3block}
\end{table}
In \fref{fig:sbp:3block:error} and \tref{tab:sbp:3block} we report the error (as
measured by \eref{eqn:error:measure}) for SBP orders $q = 2,3,4,5$, where as
before $q$ refers to the boundary order of the SBP method.
In \tref{tab:sbp:3block} we also report the convergence rate as calculated using
\eref{eqn:error:rate}. As these results show, the method maintains the
high-order accuracy of the SBP finite difference when nonconforming block
interfaces are used. As in the case of the one-to-one coupling, the rates are
more sporadic than the conforming one-to-one coupling case.

\subsection{SBP-DG Coupling}\label{sec:sbp:dg:res}
Here we consider the coupling between SBP finite difference methods and DG
finite elements methods as discussed in \sref{sec:sbp:dg}. We specifically use
the curvilinear nodal DG method on triangles as described in Hesthaven and
Warburton~\cite{HesthavenWarburton2008}. The configuration is
as shown in the right panel of \fref{fig:illustration} with an curvilinear SBP
block on the left side of the coupling interface and an unstructured,
curvilinear DG mesh on the right side of the coupling interface. The SBP block
is transformed according to \eref{eqn:left:coord}. For the DG mesh, the element
edges along the curved interface are curved by moving the interpolation nodes to
the interface and the interior interpolation points are then moved using
transfinite blending; elements that are not on the interface remain
straight-sided. Refinement for the unstructured mesh is performed in a
hierarchical fashion with each triangle split into four nested triangles.

As before, we run the SBP mesh with SBP orders $q = 2,3,4,5$ and use a
polynomial order for the DG elements of $q$. Similarly, we discretize the
left SBP block with an $(N/2+1) \times (N+1)$ grid of points, with
$N = 2^k$ for $k=6,7,8,9,10$. The initial DG mesh is chosen so that the number
of unique degrees of freedom along the interface roughly matches the number of
finite difference grid points. That is, we choose a base mesh for each order
that has $\lceil 2^6/(q+1) \rceil$ edges along the coupling interface which is
then refined in a hierarchical manner. The final time for the simulations is $t =
1$ and now consider both $\alpha = 1$ and $\alpha = 0$ for both the SBP
penalties and DG numerical flux terms.

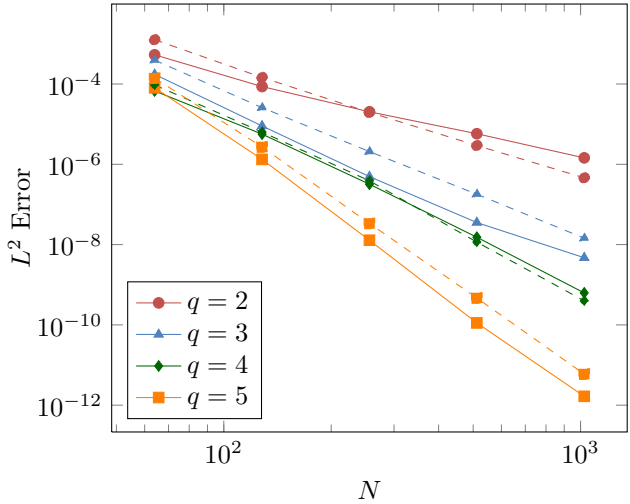
\begin{figure}[tb]
  \centering
  \begin{tikzpicture}
    \begin{loglogaxis}[
      xlabel=\textsc{$N$},
      ylabel=$L^2$ Error,
      legend pos=south west
      ]

      \addplot[color=red,mark=*] plot coordinates {%
        (64,5.339929e-04)
        (128,8.639929e-05)
        (256,2.033917e-05)
        (512,5.772342e-06)
        (1024,1.446614e-06)
      };
      \addplot[color=blu,mark=triangle*] plot coordinates {%
        (64,1.739802e-04)
        (128,9.038869e-06)
        (256,4.934588e-07)
        (512,3.542284e-08)
        (1024,4.705321e-09)
      };
      \addplot[color=grn,mark=diamond*] plot coordinates {%
        (64,6.720900e-05)
        (128,5.650986e-06)
        (256,3.193500e-07)
        (512,1.524211e-08)
        (1024,6.261450e-10)
      };
      \addplot[color=org,mark=square*] plot coordinates {%
        (64,8.056554e-05)
        (128,1.327338e-06)
        (256,1.284980e-08)
        (512,1.118821e-10)
        (1024,1.661564e-12)
      };

      \addplot[color=red,mark=*,dashed] plot coordinates {%
        (64,1.270594e-03)
        (128,1.459187e-04)
        (256,1.967363e-05)
        (512,2.938090e-06)
        (1024,4.604857e-07)
      };
      \addplot[color=blu,mark=triangle*,dashed] plot coordinates {%
        (64,3.935011e-04)
        (128,2.582201e-05)
        (256,2.071674e-06)
        (512,1.797359e-07)
        (1024,1.446322e-08)
      };
      \addplot[color=grn,mark=diamond*,dashed] plot coordinates {%
        (64,9.462354e-05)
        (128,6.363859e-06)
        (256,3.882317e-07)
        (512,1.153309e-08)
        (1024,3.969681e-10)
      };
      \addplot[color=org,mark=square*,dashed] plot coordinates {%
        (64,1.398980e-04)
        (128,2.702084e-06)
        (256,3.364922e-08)
        (512,4.669507e-10)
        (1024,5.950569e-12)
      };

      \legend{$q=2$\\$q=3$\\$q=4$\\$q=5$\\};
    \end{loglogaxis}
  \end{tikzpicture}
  \caption{Log-log plot of $N$ versus the $L^{2}$ error (measured with the
  energy norm) for the coupling of SBP and DG (see \fref{fig:illustration}).
  We plot different lines for runs of different orders where $q$ is both the SBP
  boundary order and the DG polynomial order. The solid line shows the results
  with $\alpha = 1$ and the dashed line results with $\alpha = 0$.
  }\label{fig:sbp:dg:error}
\end{figure}
\begin{table}
  \centering
  \begin{tabular}{rllll}
    \toprule
    & \multicolumn{1}{c}{$q = 2$}
    & \multicolumn{1}{c}{$q = 3$}
    & \multicolumn{1}{c}{$q = 4$}
    & \multicolumn{1}{c}{$q = 5$}\\
    {$N$}
    & \multicolumn{1}{c}{error (rate)}
    & \multicolumn{1}{c}{error (rate)}
    & \multicolumn{1}{c}{error (rate)}
    & \multicolumn{1}{c}{error (rate)}\\
    \midrule
    &
    \multicolumn{4}{c}{$\alpha  = 1$}\\
    $  64$ & \color{red} $5.3 \times 10^{ -4}$ $\phantom{(0.0)}$ & \color{blu} $1.7 \times 10^{ -4}$ $\phantom{(0.0)}$ & \color{grn} $6.7 \times 10^{ -5}$ $\phantom{(0.0)}$ & \color{org} $8.1 \times 10^{ -5}$ $\phantom{(0.0)}$\\
    $ 128$ & \color{red} $8.6 \times 10^{ -5}$ $         (2.6) $ & \color{blu} $9.0 \times 10^{ -6}$ $         (4.3) $ & \color{grn} $5.7 \times 10^{ -6}$ $         (3.6) $ & \color{org} $1.3 \times 10^{ -6}$ $         (5.9) $\\
    $ 256$ & \color{red} $2.0 \times 10^{ -5}$ $         (2.1) $ & \color{blu} $4.9 \times 10^{ -7}$ $         (4.2) $ & \color{grn} $3.2 \times 10^{ -7}$ $         (4.1) $ & \color{org} $1.3 \times 10^{ -8}$ $         (6.7) $\\
    $ 512$ & \color{red} $5.8 \times 10^{ -6}$ $         (1.8) $ & \color{blu} $3.5 \times 10^{ -8}$ $         (3.8) $ & \color{grn} $1.5 \times 10^{ -8}$ $         (4.4) $ & \color{org} $1.1 \times 10^{-10}$ $         (6.8) $\\
    $1024$ & \color{red} $1.4 \times 10^{ -6}$ $         (2.0) $ & \color{blu} $4.7 \times 10^{ -9}$ $         (2.9) $ & \color{grn} $6.3 \times 10^{-10}$ $         (4.6) $ & \color{org} $1.7 \times 10^{-12}$ $         (6.1) $\\
    \midrule
    &
    \multicolumn{4}{c}{$\alpha  = 0$}\\
    $  64$ & \color{red} $1.3 \times 10^{ -3}$ $\phantom{(0.0)}$ & \color{blu} $3.9 \times 10^{ -4}$ $\phantom{(0.0)}$ & \color{grn} $9.5 \times 10^{ -5}$ $\phantom{(0.0)}$ & \color{org} $1.4 \times 10^{ -4}$ $\phantom{(0.0)}$\\
    $ 128$ & \color{red} $1.5 \times 10^{ -4}$ $         (3.1) $ & \color{blu} $2.6 \times 10^{ -5}$ $         (3.9) $ & \color{grn} $6.4 \times 10^{ -6}$ $         (3.9) $ & \color{org} $2.7 \times 10^{ -6}$ $         (5.7) $\\
    $ 256$ & \color{red} $2.0 \times 10^{ -5}$ $         (2.9) $ & \color{blu} $2.1 \times 10^{ -6}$ $         (3.6) $ & \color{grn} $3.9 \times 10^{ -7}$ $         (4.0) $ & \color{org} $3.4 \times 10^{ -8}$ $         (6.3) $\\
    $ 512$ & \color{red} $2.9 \times 10^{ -6}$ $         (2.7) $ & \color{blu} $1.8 \times 10^{ -7}$ $         (3.5) $ & \color{grn} $1.1 \times 10^{ -8}$ $         (5.1) $ & \color{org} $4.7 \times 10^{-10}$ $         (6.2) $\\
    $1024$ & \color{red} $4.6 \times 10^{ -7}$ $         (2.7) $ & \color{blu} $1.4 \times 10^{ -8}$ $         (3.6) $ & \color{grn} $4.0 \times 10^{-10}$ $         (4.9) $ & \color{org} $6.0 \times 10^{-12}$ $         (6.3) $\\
    \bottomrule
  \end{tabular}
  \caption{SBP-DG coupling where $q$ is the SBP boundary order and the DG polynomial
  order}\label{tab:sbp:dg}
\end{table}

Shown in \fref{fig:sbp:dg:error} and \tref{tab:sbp:dg} are the error and
convergence results for this test problem for both values of $\alpha$. As can be
seen, the method is converging at high-order accuracy in all cases. As in the
purely SBP to SBP coupling the convergence rates are rather sporadic.

\paragraph{Eigenvalue Spectrum}
Here we confirm the stability results by looking at the eigenvalue spectrum of
the SBP-DG coupling. To do this we write the fully coupled system as a linear
equation
\begin{align}
  \fd{\vec{u}}{t} = \mat{A}\vec{u},
\end{align}
and then numerically compute the eigenvalues of $\mat{A}$. The energy stability
analysis implies that all the eigenvalues should
have a non-positive real part. Furthermore, when the penalty/numerical flux parameter is
chosen to be $\alpha = 0$ then the eigenvalues should be purely imaginary. To
confirm this, in \fref{fig:sbp:dg:eig} we show the eigenvalue value spectrum for
the of the coupling of the SBP operator with $q = 5$ with the DG using
polynomial order $5$; the mesh configuration used is the first resolution
for this coupling from \sref{sec:sbp:dg:res}. The maximum real part of the
eigenvalue spectrum is $8.16\times10^{-13}$ for the upwind penalty ($\alpha = 1$)
and the maximum magnitude real part of the eigenvalue spectrum is  $2.67\times10^{-13}$
for the central penalty ($\alpha = 0$), thus confirming the theoretical
stability results.

\begin{figure}
  \centering
  \includegraphics{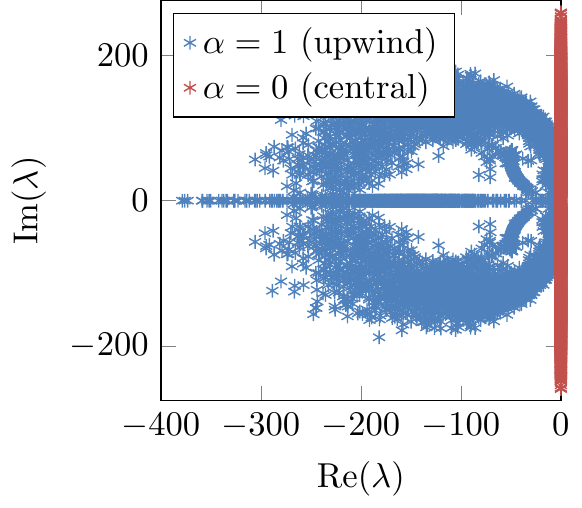}
  \caption{Eigenvalues of the coupled SBP-DG spatial discretization operator for
    the coupling of the SBP operator with $q = 5$ with the DG using polynomial
    order $5$; the mesh configuration used is the first resolution for this
    coupling from \sref{sec:sbp:dg:res}.
    We show results for both upwind $\alpha = 1$ and central $\alpha = 0$
    penalties.
  }\label{fig:sbp:dg:eig}
\end{figure}

\section{Conclusions}\label{sec:conclusions}
In this paper we have presented a new approach to coupling high-order finite
difference methods across nonconforming grid interfaces as well as with
DG methods. The core idea behind the proposed methodology is
the construction of a projection operator that moves the grid solution from the
finite difference points to a finite dimension subspace of the Hilbert space
$L^2(\Gamma)$. The value of this is that once the solution is in this subspace
it can be projected using $L^2$ integral projections to other finite dimensions
subspaces of the Hilbert space $L^2(\Gamma)$ and then projected back to the
finite difference grid. Since the projection operators are consistent with the
SBP H-matrix the fully coupled method is provable stable through the use of weak
enforcement of boundary conditions. In addition to enforcing the boundary
conditions weakly, it was necessary to account for the error in the
projection operator (namely, the fact that the finite
difference grid space and the finite dimensional subspace are not hierarchical
spaces).

In this work we chose a subspace of piecewise polynomial functions which
conformed with the finite difference points. The order of the polynomials used
matched the structure of the SBP finite difference method. That is we required
that the resulting projection operators be exact in the interior for polynomials
of order $q_{i}-1$ and near the boundary for polynomials of order $q_{b} - 1$
with $q_{i}$ and $q_{b}$ being the interior and boundary accuracy of the SBP
finite difference method.

This choice of piecewise polynomial functions as the intermediate space is
tangential to the stability results, and any other finite dimensions subspace of
the Hilbert space $L^2(\Gamma)$ could have been chosen. The reason for this is
that the projection operators we have constructed to move to polynomials can be
used as an intermediate step in moving to any other finite dimensional subspace.
That said, the value of another subspace (including a different set of
piecewise polynomials) would lie in the ability to enforce different accuracy
conditions which may be used to improve the error.

In addition to proposing a new class of SBP-compatible projection operators, we
also showed how these projection operators can be used to account for
differences in the discrete geometry seen by different blocks on either side of
an interface. Namely, we showed that the stability of the numerical method could
be preserved if the geometry terms were projected through with the grid
values. In the work presented here it is assumed that at the continuous level
the coordinate transforms are conforming. The stability results carry over
to the case of dissimilar continuous coordinate transforms, but the approach
outlined in this paper reduces the results to first order accurate. It may be
possible to preserve high-order accuracy in the more general case
where accurate projection operators are constructed between the
coordinate transforms themselves, but this was not explored in this
work.

\section*{Acknowledgments}
We thank the anonymous reviewers of this article for their many helpful
suggestions as well as Lucas Friedrich for his questions and comments concerning
\aref{app:projection}.

\appendix

\section{Proof of Lemma~\ref{lemma:disp:single}}\label{app:lemma:disp:single}
Taking the time derivative of the energy norm \eref{eqn:energy:single} and
substituting in discretization \eref{eqn:dis:consmon}--\eref{eqn:dis:hookes}
gives the energy dissipation rate
\begin{align}
  \fd{E}{t} =\;&
  -\vec{v}_{1}^{T}\mat{Q}_{1} \mat{J}\mat{\pd{\xi_{1}}{x_{1}}} \vec{p}
  -\vec{v}_{1}^{T}\mat{Q}_{2} \mat{J}\mat{\pd{\xi_{2}}{x_{1}}} \vec{p}
  -\vec{v}_{2}^{T}\mat{Q}_{1} \mat{J}\mat{\pd{\xi_{1}}{x_{2}}} \vec{p}
  -\vec{v}_{2}^{T}\mat{Q}_{2} \mat{J}\mat{\pd{\xi_{2}}{x_{2}}} \vec{p}\\\notag
  &
  -\vec{p}^{T}\mat{J}\mat{\pd{\xi_{1}}{x_{1}}}\mat{Q}_{1}\vec{v_{1}}
  -\vec{p}^{T}\mat{J}\mat{\pd{\xi_{2}}{x_{1}}}\mat{Q}_{2}\vec{v_{1}}
  -\vec{p}^{T}\mat{J}\mat{\pd{\xi_{1}}{x_{2}}}\mat{Q}_{1}\vec{v_{2}}
  -\vec{p}^{T}\mat{J}\mat{\pd{\xi_{2}}{x_{2}}}\mat{Q}_{2}\vec{v_{2}}\\\notag
  &
  -\vec{v}_{1}^{T}\vec{\mathcal{F}}_{v_{1}}
  -\vec{v}_{2}^{T}\vec{\mathcal{F}}_{v_{2}}
  -\vec{p}^{T}\vec{\mathcal{F}}_{p},
\end{align}
where
\(
\mat{Q}_{1} = \mat{Q}_{N_1}\kron\mat{H}_{N_2}
\)
and
\(
\mat{Q}_{2} = \mat{H}_{N_1}\kron\mat{Q}_{N_2}.
\)
Using the SBP property $\mat{Q}+\mat{Q}^{T} = \mat{B} =
\diag[-1,0,\dots,0,1]$ and the fact that $\mat{J}$ and
$\mat{\pd{\xi_{j}}{x_{i}}}$ are diagonal matrices (and thus commute), the volume
terms can be transformed to boundary terms:
\begin{align}
  \notag
  \fd{E}{t} =\;&
  -\vec{v}_{1}^{T}\mat{{B}}_{1} \mat{J}\mat{\pd{\xi_{1}}{x_{1}}} \vec{p}
  -\vec{v}_{1}^{T}\mat{{B}}_{2} \mat{J}\mat{\pd{\xi_{2}}{x_{1}}} \vec{p}
  -\vec{v}_{2}^{T}\mat{{B}}_{1} \mat{J}\mat{\pd{\xi_{1}}{x_{2}}} \vec{p}
  -\vec{v}_{2}^{T}\mat{{B}}_{2} \mat{J}\mat{\pd{\xi_{2}}{x_{2}}} \vec{p}\\\notag&
  -\vec{v}_{1}^{T}\vec{\mathcal{F}}_{v_{1}}
  -\vec{v}_{2}^{T}\vec{\mathcal{F}}_{v_{2}}
  -\vec{p}^{T}\vec{\mathcal{F}}_{p}\\
  \label{eqn:energy:rate}
  =\;&
  - \vec{v}_{W}^{T} \mat{H}_{2} \mat{S}_{JW} \vec{p}_{W}
  - \vec{v}_{E}^{T} \mat{H}_{2} \mat{S}_{JE} \vec{p}_{E}
  - \vec{v}_{S}^{T} \mat{H}_{1} \mat{S}_{JS} \vec{p}_{S}
  - \vec{v}_{N}^{T} \mat{H}_{1} \mat{S}_{JN} \vec{p}_{N}\\\notag&
  -\vec{v}_{1}^{T}\vec{\mathcal{F}}_{v_{1}}
  -\vec{v}_{2}^{T}\vec{\mathcal{F}}_{v_{2}}
  -\vec{p}^{T}\vec{\mathcal{F}}_{p},
\end{align}
where \(\mat{{B}}_{1} = \vec{B}_{N_1}\kron\vec{H}_{N_2}\) and
\(\mat{{B}}_{2} = \vec{H}_{N_1}\kron\vec{B}_{N_2}\).
Note that here we have also used the fact that along the block boundaries
\begin{align}
  J \pd{\xi_{i}}{x_{2}} &= \pm n_{1}S_{J},&
  J \pd{\xi_{i}}{x_{1}} &= \pm n_{2}S_{J},
\end{align}
with the positive sign being taken on the ``north'' ($\xi_{2} = 1$) and ``east''
($\xi_{1} = 1$) boundaries and the negative sign being taken on the on the
``south'' ($\xi_{2} = -1$) and ``east'' ($\xi_{1} = -1$) boundaries; see
\eref{eqn:surf:Jacobian}.
Recall also that $\vec{v}_{W}$, $\vec{v}_{E}$, $\vec{v}_{N}$, and $\vec{v}_{S}$
are the normal components of the velocity along the west, east, north, and south
edges; see \eref{eqn:pen:west}.
Using the definition of the penalty terms \eref{eqn:pen:v}--\eref{eqn:pen:p}
along with form \eref{eqn:pen:west}, allows us to rewrite the penalty terms in
\eref{eqn:energy:rate} as
\begin{align}
  \vec{v}_{1}^{T}\vec{\mathcal{F}}_{v_{1}} + \vec{v}_{2}^{T}\vec{\mathcal{F}}_{v_{2}}
  =\;&
  \vec{v}_{W}^{T}\mat{H}_{2}\mat{S}_{JW}\left( \vec{p}^{*}_{W} - \vec{p}_{W} \right)
  + \vec{v}_{E}^{T}\mat{H}_{2}\mat{S}_{JE}\left( \vec{p}^{*}_{E} - \vec{p}_{E} \right)\\
  \notag
  &+ \vec{v}_{S}^{T}\mat{H}_{1}\mat{S}_{JS}\left( \vec{p}^{*}_{S} - \vec{p}_{S} \right)
  + \vec{v}_{N}^{T}\mat{H}_{1}\mat{S}_{JN}\left( \vec{p}^{*}_{N} - \vec{p}_{N} \right)\\
  \vec{p}^{T}\vec{\mathcal{F}}_{p} =\;&
  \vec{p}_{W}^{T}\mat{H}_{2}\mat{S}_{JW}\left( \vec{v}^{*}_{W} - \vec{v}_{W} \right)
  + \vec{p}_{E}^{T}\mat{H}_{2}\mat{S}_{JE}\left( \vec{v}^{*}_{E} - \vec{v}_{E} \right)\\
  \notag
  &+ \vec{p}_{S}^{T}\mat{H}_{1}\mat{S}_{JS}\left( \vec{v}^{*}_{S} - \vec{v}_{S} \right)
  + \vec{p}_{N}^{T}\mat{H}_{1}\mat{S}_{JN}\left( \vec{v}^{*}_{N} - \vec{v}_{N} \right).
\end{align}
These penalty terms can then be used in \eref{eqn:energy:rate} to write
\begin{align}
  \fd{E}{t}
  =
  &
  - \vec{v}_{W}^{T}\mat{H}_{2}\mat{S}_{JW}\vec{p}^{*}_{W}
  + \vec{v}_{W}^{T}\mat{H}_{2}\mat{S}_{JW}\vec{p}_{W}
  - {\left(\vec{v}^{*}_{W}\right)}^{T}\mat{H}_{2}\mat{S}_{JW}\vec{p}_{W}\\
  \notag
  &
  - \vec{v}_{E}^{T}\mat{H}_{2}\mat{S}_{JE}\vec{p}^{*}_{E}
  + \vec{v}_{E}^{T}\mat{H}_{2}\mat{S}_{JE}\vec{p}_{E}
  - {\left(\vec{v}^{*}_{E}\right)}^{T}\mat{H}_{2}\mat{S}_{JE}\vec{p}_{E}\\
  \notag
  &
  - \vec{v}_{S}^{T}\mat{H}_{1}\mat{S}_{JS}\vec{p}^{*}_{S}
  + \vec{v}_{S}^{T}\mat{H}_{1}\mat{S}_{JS}\vec{p}_{S}
  - {\left(\vec{v}^{*}_{S}\right)}^{T}\mat{H}_{1}\mat{S}_{JS}\vec{p}_{S}\\
  \notag
  &
  - \vec{v}_{N}^{T}\mat{H}_{1}\mat{S}_{JN}\vec{p}^{*}_{N}
  + \vec{v}_{N}^{T}\mat{H}_{1}\mat{S}_{JN}\vec{p}_{N}
  - {\left(\vec{v}^{*}_{N}\right)}^{T}\mat{H}_{1}\mat{S}_{JN}\vec{p}_{N},
\end{align}
which is \eref{eqn:disp:single} with the substitution of
\eref{eqn:disp:single:edge}.

\section{Projection Operators}\label{app:projection}
Here we discuss the construction of projection operators that satisfy
Definition~\ref{def:SBPprojection} and the accuracy conditions
\eref{eqn:accuracy}. We will first consider the construction of the operators in
the interior of the domain and then the operators near the boundary. We only
consider diagonal norm SBP operators that have boundary accuracy $p_{b}$ and
interior accuracy $p_{i} = 2p_{b}$. Thus from \eref{eqn:accuracy}, we are 
seeking projection operators $\mat{P}_{f2g}$ and $\mat{P}_{g2f}$ which exactly
project interior polynomials of order $p_{i}-1$ and boundary polynomials of
order $p_{b}-1$.

Let one interval of the glue grid be represented by the basis
${\left\{\phi_{i}\right\}}_{i=0}^{n}$ where $n = p_{i}-1$; we use Legendre
polynomials defined on $[-1, 1]$ in our codes. The piecewise polynomial on the
$k$th glue grid interval (i.e., the glue grid interval $[x_{k},x_{k-1}]$) is
then $p_{n}^{(k)}(x) = \sum_{i=0}^{n} \omega^{(k)}_{i} \phi^{(k)}_{i}(x)$ with
${\left\{\omega^{(k)}_{i}\right\}}_{i=0}^{n}$ being the modal weights which define the function on the
glue. Note that here (and in the following) the superscript $(k)$ on
$\phi_{i}^{(k)}(x)$ is used to signify that polynomial has been shifted to the
interval $[x_{k},x_{k+1}]$, i.e.,
\begin{align}
  \phi_{i}^{(k)}(x) = \phi_{i}\left(\frac{x-x_{k}}{x_{k+1}-x_{k}} -
                                    \frac{x_{k+1}-x}{x_{k+1}-x_{k}}\right).
\end{align}
Note that the basis functions $\psi_{i}(\eta)$ discussed in
\sref{sec:definitions} are basis functions over the whole glue grid, whereas the
basis function $\phi_{i}^{(k)}(x)$ used here are only over a single glue grid
interval.

In the interior, $m=2l$ glue grid intervals are projected to a single grid
point. Thus, if we consider grid solution $q_{k}$ (i.e., the solution at grid
point $k$) we use intervals $k-l$ through $k-1+l$. It is also natural to impose
symmetry conditions on the operator, which then leads to a projection of the
form
\begin{align}
  \label{eqn:app:interior:projection}
  q_{k} = \sum_{j=1}^{l} \sum_{i=0}^{n} \beta_{ij}
  \left[ {(-1)}^{i} \omega_{i}^{(k-j)} + \omega_{i}^{(k-1+j)}\right],
\end{align}
where the ${(-1)}^{i}$ comes from the fact that even modes are symmetric and odd
modes skew-symmetric. Here the coefficient $\beta_{ij}$ is the contribution of
mode $i$ of interval $k+1-j$ and $k+j$ to the grid point value $k$. Note that
$\left\{\beta_{ij}\right\}$ are the values that we are seeking to construct.

For the $s+1$ grid points near the boundary, i.e., grid values $q_{k}$ for
$k=0,\dots,s$, the intervals $1$ through $r$ are used to construct the
projection; by the conditions for stability given in
Definition~\ref{def:SBPprojection} it is required that $s+1 = r+1-l$. The
projection at the boundary then takes the form
\begin{align}
  \label{eqn:app:boundary:projection}
  q_{k} = \sum_{i=0}^{n} \sum_{j=0}^{r-1} \alpha^{(k)}_{ij}
  \omega_{i}^{(j)},~k=0,\dots,s,
\end{align}
where here we allow the boundary polynomials to be of order $n$ even though the
accuracy condition at the boundary are for polynomials of degree $p_{b}-1 =
(n-1)/2$. Hence in total we have $n_{d} = m(n+1) + r(s+1)(n+1)$
coefficients $\left\{\beta_{ij}\right\}$ and
$\left\{\alpha_{ij}^{(k)}\right\}$ to determine.

It is important to remember that by Definition~\ref{def:SBPprojection}, the
structure of the projection from the finite difference grid to the glue is given
by $\mat{P}_{f2g} = \mat{M}^{-1}\mat{P}_{g2f}^{T}\mat{H}$. Thus, once the
structure of $\mat{P}_{g2f}$ is specified $\mat{P}_{f2g}$ is also fixed, i.e.,
no new degrees of freedom are introduced in the problem.

To determine value $\left\{\alpha_{ij}^{(k)}\right\}$ and
$\left\{\beta_{ij}\right\}$, Equations
\eref{eqn:app:interior:projection} and
\eref{eqn:app:boundary:projection} along
with Definitions~\ref{def:SBPprojection} are used as constraints.  To understand
how this is done, consider first \eref{eqn:app:interior:projection}. Let
$\xi(x)$ be a polynomial of degree less than $p_{i}$. Let $\xi(x)$ be
represented on the glue grid by the modal coefficients $\omega_{i}^{(j)}$ for $i
= 0,\dots, n$ and $j = k-l,\dots,k+l-1$.  We want to find values for
$\left\{\beta_{ij}\right\}$ such that $q_{k} = \xi(x_k)$.  Additionally, we want the
$\left\{\beta_{ij}\right\}$ values to have the property that if on the
finite difference grid we have
$q_{j} = \xi(x_{j})$ for $j = k+1-l, \dots, k+l$ then the projection
$\mat{P}_{f2g}$ results in all $n+1$ the modal coefficients being exact for
glue grid interval $k$. The requirement that these two constraints hold for all
polynomials $\xi(x)$ of degree less than $p_i$ results in $p_{i} + p_i(n+1)$ linear
constraints; note that in our code we take $\xi(x)$ to be the Legendre
polynomials.  Constraints for the $\left\{\alpha_{ij}^{(k)}\right\}$
values are derived in an analogous way from
\eref{eqn:app:boundary:projection} except that we require exactness
for polynomials of degree less than $p_{b}$ and thus this introduces
an additional $(s+1)p_{b} + rp_{b}(n+1)$ constraints (the factors $r$ and $s+1$
arise because we have a separate constraint for all $r$ boundary intervals and
$s+1$ boundary grid points). There are an additional $l(n+1)$ constraints due to
the symmetry conditions on $\left\{\beta_{ij}\right\}$ and thus there are a total of
constraints $n_{c} = l(n+1) + p_{i} + p_{i}(n+1) + (s+1)p_{b} + (s+1)p_{b}(n+1)$
to enforce.

\begin{table}
  \centering
  \begin{tabular}{rrrrrrr}
    \toprule
    {$p_{i}$} &
    {$p_{b}$} &
    {$l$} &
    {$s+1$} &
    {$r$} &
    {$n_{c}$} &
    {$n_{d}$}\\
    \midrule
    2  & 1 & 1 &  1 &  1 &   11 &    6\\
    4  & 2 & 2 &  4 &  5 &   76 &   96\\
    6  & 3 & 3 &  6 &  8 &  222 &  324\\
    8  & 4 & 4 &  9 & 12 &  524 &  928\\
    10 & 5 & 5 & 12 & 16 & 1020 & 2020\\
    \bottomrule
  \end{tabular}
  \caption{Parameters used in construction of the SBP compatible projection
   operators. Here $n_{c}$ and $n_{d}$ are the total number of
  constraints and degrees of freedom, respectively.\label{tab:app:parameters}}
\end{table}
We solve these constraint equations numerically using the MATLAB codes included
as an electronic supplement to this paper. The values of $s$ and $l$ needed for
the SBP operators used in this work are given in \tref{tab:app:parameters}
along with the total number of constraints, $n_{c}$, and degrees of freedom
($\left\{\alpha_{ij}^{(k)}\right\}$ and $\left\{\beta_{ij}\right\}$),
$n_{d}$; in the case of $p_{i}=2$ even
though we have more constraints than coefficients a solution does exist and in
all cases there is redundancy in the constraints, that is the linear system does
not have full row rank.  In all but the $p_{i} = 2$ case, the coefficients of the
projection are underdetermined and we use MATLAB's optimization library to
minimize the distance between nearest eigenvalues of $\mat{B} =
\mat{P}_{g2f}\mat{P}_{f2g}$ for a finite difference grid of size $N$. The
motivation for this optimization is to (in some sense) minimize the projection
error by using the degrees of freedom to make $\mat{B}$ closer to an identity
operation.  We use the value of $N=64$, which has been chosen to make the
optimization problem tractable. It is important to note that the optimization is
being used to fix the remaining degrees of freedom after the stability condition
\eref{eqn:h:compatible:projections} is satisfied, i.e., stability does not
depend on the choice of the optimization objective function and other
types of optimization could be considered.

MATLAB routines as well as the final optimized coefficients can be found in the
electronic supplement to this paper as well as in the github repository located
at \url{https://github.com/bfam/sbp_projection_operators}.

\newpage

\bibliographystyle{siam}
\bibliography{long,nps}

\end{document}